\newtheorem{thm}{Theorem}[section]
\newtheorem{lemma}[thm]{Lemma}
\newtheorem{prop}[thm]{Proposition}
\newtheorem{cor}[thm]{Corollary}
\newtheorem{question}[thm]{Question}
\newtheorem{fact}[thm]{Fact}
\theoremstyle{definition}
\newtheorem{df}[thm]{Definition}
\newtheorem{nrmk}[thm]{Remark}
\newtheorem{notation}[thm]{Notation}
\newtheorem{ex}[thm]{Example}
\theoremstyle{remark}
\renewcommand{\r}{\mathbb{R}}
\newcommand{\Z}{\mathbb{Z}}
\newcommand{\curly}[1]{\mathcal{#1}}
\newcommand{\B}{\curly{B}}
\newcommand{\F}{\curly{F}}
\newcommand{\n}{\mathbb{N}}
\renewcommand{\to}{\rightarrow}
\newcommand\concat{\widehat{\phantom{\eta}}}
\def \balpha{\boldsymbol\alpha}
\def \bbeta{\boldsymbol\beta}
\def \<{\langle}
\def \>{\rangle}
\def \*Z {{{^*}\Z}}
\def \((  {(\!(}
\def \)) {)\!)}
\def \m{\operatorname{M}}
\def \tp{\operatorname{tp}}
\numberwithin{equation}{section}
\def \tr{\operatorname{tr}}
\def \Th{\operatorname{Th}}
\def \R{\mathcal R}
\def \u{\mathcal U}
\def \m{\mathcal{M}}
\def \good{\operatorname{good}}
\def  \Fix{\operatorname{Fix}}
\def \H{\mathcal{H}}
\def \U{\operatorname{U}}
\def \vNa{\operatorname{vNa}}
\def\indsym#1#2{%
  \setbox0=\hbox{$\m@th#1x$}%
  \kern\wd0%
  \hbox to 0pt{\hss$\m@th#1\mid$\hbox to 0pt{$\m@th#1^{#2}$}\hss}%
  \lower.9\ht0\hbox to 0pt{\hss$\m@th#1\smile$\hss}%
  \kern\wd0}
\def\dotminussym#1#2{%
  \setbox0=\hbox{$\m@th#1-$}%
  \kern.5\wd0%
  \hbox to 0pt{\hss\hbox{$\m@th#1-$}\hss}%
  \raise.6\ht0\hbox to 0pt{\hss$\m@th#1.$\hss}%
  \kern.5\wd0}
\newcommand{\dotminus}{\mathbin{\mathpalette\dotminussym{}}}
\def\nindsym#1#2{%
  \setbox0=\hbox{$\m@th#1x$}%
  \kern\wd0%
  \hbox to 0pt{\hss$\m@th#1\not$\kern1.4\wd0\hss}
  \hbox to 0pt{\hss$\m@th#1\mid$\hbox to 0pt{$\m@th#1^{\,#2}$}\hss}%
  \lower.9\ht0\hbox to 0pt{\hss$\m@th#1\smile$\hss}%
  \kern\wd0}
\DeclareMathOperator{\Aut}{Aut}
\DeclareMathOperator{\SL}{SL}
\newcommand{\cstar}{$\mathrm{C}^*$}
\def \Met{\operatorname{Met}}
\def \Mod{\operatorname{Mod}}
\def \Erg{\operatorname{Erg}}
\title{Spectral gap and definability}
\author{Isaac Goldbring}
\thanks{Goldbring's work was partially supported by NSF CAREER grant DMS-1349399.}
\address {Department of Mathematics, University of California, Irvine, 340 Rowland Hall (Bldg.\# 400), Irvine, CA, 92697-3875.}
\email{isaac@math.uci.edu}
\urladdr{http://www.math.uci.edu/~isaac}
\begin{document}

\begin{abstract}
We relate the notions of spectral gap for unitary representations and subfactors with definability of certain important sets in the corresponding structures.  We give several applications of this relationship.
\end{abstract}

\maketitle

\tableofcontents

\section{Introduction}

The notion of definable set is one of (if not \emph{the}) most important concepts in classical model theory.  Indeed, if one wants to understand the model-theoretic properties of a given structure and/or make use of this understanding in applications, a thorough analysis of the definable sets in the structure is often indispensable.

While continuous model theory resembles classical model theory in many respects and a general pattern of formulating continuous analogues of classical definitions and results soon becomes apparent, the continuous logic notion of definable set sometimes can pose a problem for classical model theorists.  Since continuous logic formulae are real-valued, a na\"ive first guess is that definable sets in continuous logic should just be level sets of formulae, that is, for a given structure $\mathbf{M}$, formula $\varphi(\vec x)$ (perhaps with parameters in $\mathbf{M}$), and $r\in \r$, one might guess that $\{\vec a\in \mathbf{M} \ : \ \varphi^{\mathbf{M}}(\vec a)=r\}$ should be a definable set.  While it is true that ultimately all definable sets are of this form, for various reasons one quickly realizes that this na\"ive notion of definable set is not robust enough to carry out many familiar arguments using definable sets.  For instance, one would like to be able to take a formula and quantify some of the free variables over a definable set and be left with a formula again.  In general, this property does not hold when quantifying over level sets.

Eventually, a notion of definable set in continuous logic was proposed that allowed for the remainder of the basic theory to go through as in the classical case.  While the definition served the correct theoretical purpose, some practical criticisms remained, namely to give an analysis of the definable sets and functions in some basic metric structures.  This proves to be a much more difficult endeavor than in classical logic.  We made an attempt at such an analysis in the papers \cite{Gold1}, \cite{Gold2}, and \cite{Gold3}, but our efforts were far from yielding a complete classification.  

In this paper, we switch our perspective and instead show how a particularly important notion in von Neumann algebras, namely that of \textbf{spectral gap}, is intimately related to the definability of certain naturally occurring sets.  Spectral gap is an integral part of Popa's deep theory of \textbf{deformation rigidity} in the study of II$_1$ factors; see, for example, his ICM survey \cite{popaICM}.  Our analysis in this paper merely scratches the surface of the model-theoretic study of spectral gap and it is our hopes that a much finer analysis could shed some light on the underlying model theory behind Popa's extremely successful programme.

We would be remiss if we did not point out that the notion of definability has played an extremely important role in the model-theoretic study of \cstar-algebras.  An extensive treatment of this topic can be found in \cite{munster}.

We now sketch the contents of our paper.  First, at the end of this introduction, we give an extremely rapid introduction to continuous logic; by no means is this introduction exhaustive but rather the reader should view it as our attempt to fix notation and presentation.  In Section 2, we give a careful treatment of the notion of definable set.  We take this opportunity to offer some alternative nomenclature and motivation that might prove psychologically useful to classical model theorists trying to understand the rationale behind the continuous logic definition of definable set.

In Section 3, we begin the study of the connection between spectral gap and definability in the context of unitary group representations, in which this notion originally came to light.  This allows us the opportunity to give a model-theoretic treatment of this notion in a context that is much more simple and involves far less prerequisites.

In Section 4, we introduce the basic facts that we need about von Neumann algebras for the rest of the paper.  Once again, our treatment is quick and a more detailed presentation of the subject aimed at model theorists can be found in \cite{Gold4}.  

In Section 5, we arrive at the main results in the paper.  We give a model-theoretic treatment of spectral gap subalgebra and use it to give some applications, including a technologically more elementary proof of the fact that the theory of tracial von Neumann algebras does not have a model companion, which was originally proven in \cite{nomodcomp} with Bradd Hart and Thomas Sinclair.

In the final section, we take the opportunity to discuss our paper \cite{braddisaachenry}, joint with Bradd Hart and Henry Towsner.  In that paper, we give a model-theoretic account of the striking paper \cite{BCI}, where they show that the family of continuum many pairwise non-isomoprhic separable II$_1$ factors constructed by McDuff are in fact pairwise non-elementarily equivalent.  It turns out that in the background of both papers, heavy uses of spectral gap are employed and it is the aim of the final section to spell out these hidden uses in more detail.

We would like to thank Bradd Hart, Thomas Sinclair, and Todor Tsankov for many useful conversations regarding this work.  We would also like to thank the Institut Henri Poincar\'e for their incredible hospitality during our stay there, where the majority of this paper was written.

\subsection{A crash course in continuous logic}

In this subsection, we give a very short introduction to continuous logic.  We make no attempt at being exhaustive, but rather use this as a chance to fix our terminology, notation and setup.  We borrow heavily from \cite{munster}.  

\begin{df}
A \textbf{metric structure} is a triple $\mathbf{M}:=(\mathbf{S}(\mathbf{M}),\mathbf{F}(\mathbf{M}),\mathbf{R}(\mathbf{M}))$ where:
\begin{enumerate}
\item $\mathbf{S}(\mathbf{M})$ is an indexed family of complete bounded metric spaces, called the sorts of $\mathbf{M}$;
\item $\mathbf{F}(\mathbf{M})$, the set of distinguished functions, is a set of uniformly continuous functions such that, for $f\in \mathbf{F}(\mathbf{M})$, the domain of $f$ is a finite product of sorts and the codomain of $f$ is another sort.
\item $\mathbf{R}(\mathbf{M})$, the set of distinguished relations, is a set of uniformly continuous functions such that, for $R\in \mathbf{R}(\mathbf{M})$, the domain of $R$ is a finite product of sorts and the codomain of $R$ is a compact interval in $\r$.
\end{enumerate}
\end{df}

As in classical model theory, one uses a \textbf{signature} to describe a metric structure.  Formally, a signature is a triple $L:=(\mathfrak{S},\mathfrak{F},\mathfrak{R})$, where:
\begin{itemize}
\item $\mathfrak{S}$ is a set of sorts.  Moreover, for each sort $S\in \mathfrak{S}$, the signature must provide a real number $K_S$.
\item For each function symbol $F\in \mathfrak{F}$, the signature must provide an arity, which is a finite sequence $(S_1,\ldots,S_n)$ from $\mathfrak{S}$ (the domain of $F$) together with another element $S$ from $\mathfrak{S}$ (the codomain of $F$).  In addition, the signature must provide a \textbf{modulus of uniform continuity} for $F$.
\item For each relation symbol $R\in \mathfrak{R}$, the signature must provide an arity, which is a finite sequence $(S_1,\ldots,S_n)$ from $\mathfrak{S}$ (the domain of $R$) together with a compact interval $K_R$ in $\r$ (the codomain of $R$).  Once again, the signature must provide a modulus of uniform continuity.
\end{itemize}

Given a signature $L$, there is a natural notion of a metric structure $\mathbf{M}$ being an $L$-structure.  The key points are as follows:
\begin{itemize}
\item $\mathbf{S}(\mathbf{M})$ is indexed by $\mathfrak{S}$.  Moreover, for each $S\in \mathfrak{S}$, letting $(S(\mathbf{M}),d_{S(\mathbf{M})})$ denote the metric space indexed by $S$, $K_S$ is a bound on the diameter of $(S(\mathbf{M}),d_{S(\mathbf{M})})$.
\item $\mathbf{F}(\mathbf{M})$ is indexed by $\mathfrak{F}$.  Moreover, for each $F\in \mathfrak{F}$, letting $F^{\mathbf{M}}$ denote the corresponding distinguished function, we have that the domain, codomain, and modulus of uniform continuity of $F^{\mathbf{M}}$ are as prescribed by the signature. 
\item The analogous statement as in the previous item for $\mathfrak{R}$.
\end{itemize}

Given a metric signature $L$, one defines \textbf{$L$-terms} as in classical logic.  \textbf{Atomic $L$-formulae} are given by:
\begin{itemize}
\item $Rt_1\cdots t_n$, where $R\in \mathfrak{R}$, the domain of $R$ is $(S_1,\ldots,S_n)$, and each $t_i$ is a term of sort $S_i$;
\item $d_S(t_1,t_2)$, where $t_1$ and $t_2$ are terms of sort $S$. 
\end{itemize} 
One obtains arbitrary $L$-formulae by closing under all continuous functions $\r^n\to \r$ and the \textbf{quantifiers} $\sup$ and $\inf$.

Given an $L$-formula $\varphi(\vec x)$, an $L$-structure $\textbf{M}$, there is a natural notion of the \textbf{interpretation} of $\varphi$ in $\bf{M}$, which is a function $\varphi^{\bf{M}}:{\bf{M}}^{\vec x}\to \r$.  Each $\varphi^{\bf{M}}$ is a uniformly continuous function taking values in a compact interval $K_\varphi$ in $\r$ in a way that depends only on $L$ (and not on the choice of $\bf{M}$).

An \textbf{$L$-sentence} is an $L$-formula with no free variables.  An \textbf{$L$-theory} is a set of $L$-sentences.  Given an $L$-theory $T$ and an $L$-structure $\bf{M}$, we say that \textbf{$\bf{M}$ is a model of $T$}, denoted $\bf{M}\models T$, if $\sigma^{\bf M}=0$ for all $\sigma \in T$.  We let $\Mod(T)$ denote the class of all models of $T$.  A class $\mathcal{C}$ of $L$-structures is called an \textbf{elementary class} if there is an $L$-theory $T$ such that $\mathcal{C}=\Mod(T)$.

As in classical logic, there are appropriate notions of \textbf{embedding}, \textbf{elementary embedding}, \textbf{substructure}, and \textbf{elementary substructure}.

Ultraproducts will play an important role in this paper, so let us recall the construction in continuous logic.  Let $(\mathbf{M}_i)_{i\in I}$ denote a family of $L$-structures and let $\u$ be an ultrafilter on $I$.  For each sort $S$, we let $d_{S,\u}$ denote the pseudometric on $\prod_{i\in I}S(\mathbf{M}_i)$ given by $d_{S,\u}(x_i,y_i):=\lim_{\u}d_{S(\mathbf{M}_i)}(x_i,y_i)$.  The \textbf{ultraproduct} of the family $(\mathbf{M}_i)$, denoted $\prod_\u \mathbf{M}_i$ has as the underlying metric space of sort $S$ the quotient of $\prod_{i\in I}S(\mathbf{M}_i)$ by the pseudometric $d_{S,\u}$ and with symbols interpreted coordinatewise as usual.\footnote{Taken literally, it seems that a relation symbol $R$ with values in the interval $K_R$ now take values in an ultrapower $K_R^\u$ of $K_R$.  However, $K_R^\u$ is naturally isomorphic to $K_R$ via the ultralimit map.}  When $\mathbf{M}_i=\mathbf{M}$ for all $i$, we speak instead of the \textbf{ultrapower} of $\mathbf{M}$, denoted $\mathbf{M}^\u$.  For $(x_i)\in \prod_{i\in I} S(\mathbf{M}_i)$, we let $(x_i)^\bullet$ denote its equivalence class in the ultraproduct.   The diagonal embedding of $\mathbf{M}$ in $\mathbf{M}^\u$ is the embedding given by mapping $a\in \mathbf{M}$ to $(a,a,a,\ldots)^\bullet$.  The \L os theorem for continuous logic states that, for an arbitrary formula $\varphi(\vec x)$ and $\vec a=(\vec a_i)^\bullet\in (\prod_\u \mathbf{M}_i)^{\vec x}$, we have that $$\varphi^{\prod_\u \mathbf{M}_i}(\vec a):=\lim_\u \varphi^{\mathbf{M}_i}(\vec a_i).$$  It follows that a diagonal embedding is always an elementary map.

In Section 5, existentially closed structures will be a point of discussion.  For the sake of brevity, we give the semantic definition.  If $\mathbf{M}$ and $\mathbf{N}$ are $L$-structures with $\mathbf{N}\subseteq \mathbf{M}$, we say that $\mathbf{N}$ is \textbf{existentially closed} (or e.c.) in $\mathbf{M}$ if there is an embedding $\mathbf{M}\hookrightarrow \mathbf{N}^{\u}$ such that the restriction to $\mathbf{N}$ is the diagonal embedding.  If $T$ is an $L$-theory, we call a model $\mathbf{N}\models T$ an existentially closed model of $T$ if it is existentially closed in all extensions that are also models of $T$.

\section{Definability in continuous logic}

\subsection{Generalities on formulae}

Until further notice, we fix a base theory $T$.  The goal of this subsection is to extend the notion of formula in models of $T$.  Towards this end, we let $\F_{\vec x}^0$ denote the set of $L$-formulae with free variables $\vec x$.  Observe that $\F_{\vec x}^0$ is naturally a pseudometric space when equipped with the pseudometric
$$d(\varphi,\psi):=d_{\F_{\vec x}^0}(\varphi,\psi):=\sup\{|\varphi^{\mathbf{M}}(\vec a)-\psi^{\mathbf{M}}(\vec a)| \ : \ \mathbf{M}\models T, \ \vec a\in \mathbf{M}^{\vec x}\}.$$

$d$ is indeed a pseuodmetric rather than a metric as two formulae $\varphi$ and $\psi$ are \textbf{$T$-equivalent} precisely when $d(\varphi,\psi)=0$.

When given a pseudometric space $(X,d)$, one is naturally inclined to form its completion $(\overline{X},\overline{d})$.  Here, by the completion of a pseudometric space, we mean the completion of its separation.  Concretely, we view $\overline{X}$ as the space of equivalence classes of Cauchy sequences $(x_n)$ from $X$ modulo the pseudometric $d((x_n),(y_n)):=\lim_n d(x_n,y_n)$; this pseudometric naturally induces a metric $\overline{d}$ on $\overline{X}$.

For each of notation, we set $\mathcal{F}_{\vec x}:=\overline{\mathcal{F}_{\vec x}^0}$.

\begin{df}
A \textbf{$T$-formula}\footnote{In the literature, this is called a \textbf{definable predicate in $T$} but we find the term $T$-formula much more suggestive.} (over $\vec x$) is simply an element of $\mathcal{F}_{\vec x}$.
\end{df}

\begin{nrmk}
With this terminology, a formula from $\F_{\vec x}^0$ is not technically a $T$-formula, but rather its $T$-equivalence class is a $T$-formula.  Since $T$-equivalent formulae might as well be treated as equal, this abuse of terminology is not troublesome.  An element of $\F_{\vec x}^0$ (viewed as a $T$-formula) will be referred to as a \textbf{simple} $T$-formula.
\end{nrmk}

Consider $\varphi\in \mathcal{F}_{\vec x}$ and take a Cauchy sequence $(\varphi_n(\vec x))$ from $\F_{\vec x}^0$ that represents $\varphi$.  Then, for every $\epsilon>0$, there is $N$ such that, for all $m\geq n\geq N$, we have $d(\varphi_m,\varphi_n)\leq \epsilon$, or, in other words:
$$T\models \sup_{\vec x}|\varphi_m(\vec x)-\varphi_n(\vec x)|\leq \epsilon.$$  Note also that if $(\psi_n(x))$ is another Cauchy sequence from $\F_{\vec x}^0$ representing $\varphi$, then, there is $N'$ such that, for all $m\geq N'$, we have $d(\varphi_m,\psi_m)\leq \epsilon$, or, in other words:
$$T\models \sup_{\vec x}|\varphi_m(\vec x)-\psi_m(\vec x)|\leq \epsilon.$$
Consequently, in all $\mathbf{M}\models T$, we have a well-defined \textbf{interpretation} $\varphi^{\mathbf{M}}$ of $\varphi$ given by $\varphi^{\mathbf{M}}(\vec a):=\lim_n \varphi_n^\mathbf{M}(\vec a)$.\footnote{The fact that we have such an interpretation yields credence to the use of the term $T$-formula.}  Note also that $\varphi^\textbf{M}$ is a uniformly continuous function which takes values in a compact interval $K_\varphi$ in $\r$ in a way that does not depend on $\mathbf{M}$.


\begin{ex}
Suppose that $(\psi_m(\vec x))$ is any sequence from $\F_{\vec x}^0$.  For $n\in \n$, set $\varphi_n(\vec x):=\sum_{m\leq n}2^{-m}\psi_m(\vec x)$.  It is clear that $(\varphi_n)$ is a Cauchy sequence from $\F_{\vec x}^0$, whence represents a $T$-formula.  For psychological reasons, we denote this $T$-formula by $\sum_m 2^{-m}\psi_m$.
\end{ex}

In connection with the main result of the next section, it will prove useful to end this subsection with a brief discussion on type-spaces.

\begin{df}

\

\begin{enumerate}
\item Given $\mathbf{M}\models T$ and $\vec a\in \mathbf{M}^{\vec x}$, we define the \textbf{type of $\vec a$ in $\mathbf{M}$} to be the function $\tp^{\mathbf{M}}(\vec a):\mathcal{F}_{\vec x}\to \r$ given by $\tp^{\mathbf{M}}(\vec a)(\varphi):=\varphi^{\mathbf{M}}(\vec a)$.  
\item A function $p:\mathcal{F}_{\vec x}\to \r$ is called a \textbf{type in $\vec x$ over $T$} if $p=\tp^{\mathbf{M}}(\vec a)$ for some $\mathbf{M}\models T$ and some $\vec a\in \mathbf{M}^{\vec x}$; in this case, we say that $\vec a$ \textbf{realizes} $p$.
\item The set of types in $T$ over $\vec x$ will be denoted by $S_{\vec x}(T)$.
\item Given $\varphi\in \mathcal{F}_{\vec x}$, we define $f_\varphi:S_{\vec x}(T)\to \r$ by $f_\varphi(p):=p(\varphi)$.  The \textbf{logic topology} on $S_{\vec x}(T)$ is the weakest topology making all maps $f_{\varphi}$ continuous.  The Compactness Theorem yields that $S_{\vec x}(T)$ is compact with respect to the logic topology.\footnote{A remark for the analysts:  $\overline{\mathcal{F}_{\vec x}}$ is naturally a Banach over $\r$.  Note then that a type is a continuous functional on $\overline{\mathcal{F}_{\vec x}}$ and an easy ultraproduct argument shows that $S_{\vec x}(T)$ is a closed subset of $(\overline{\mathcal{F}_{\vec x}})^*$ in the weak*-topology; the logic topology on $S_{\vec x}(T)$ is merely the restriction of the weak*-topology.  The compactness of $S_{\vec x}(T)$ follows from this observation.}
\end{enumerate}
\end{df}

It is straightforward to verify that the set of functions $f_\varphi$ separate points in $S_{\vec x}(T)$, the Stone-Weierstrass theorem yields:

\begin{thm}\label{SW}
A function $f:S_{\vec x}(T)\to \r$ is continuous if and only if there is a $T$-formula $\varphi(\vec x)$ for which $f=f_\varphi$.
\end{thm}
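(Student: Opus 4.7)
The plan is to prove each direction separately, with Stone--Weierstrass as the main tool for the nontrivial direction. The easy direction ($\Leftarrow$) is immediate: if $\varphi \in \mathcal{F}_{\vec x}$, then $f_\varphi$ is continuous by the very definition of the logic topology as the weakest topology making every such $f_\varphi$ continuous.

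For the hard direction ($\Rightarrow$), let $\mathcal{A} := \{f_\varphi \ : \ \varphi \in \F_{\vec x}^0\} \subseteq C(S_{\vec x}(T), \r)$. I would first verify the hypotheses of Stone--Weierstrass. The space $S_{\vec x}(T)$ is compact Hausdorff: compactness was noted in the definition (via the Compactness Theorem), while Hausdorffness follows from the stated fact that the family $\{f_\varphi\}_{\varphi \in \mathcal{F}_{\vec x}}$ separates points. The set $\mathcal{A}$ is a unital $\r$-subalgebra of $C(S_{\vec x}(T), \r)$, since $\F_{\vec x}^0$ is closed under all continuous connectives $\r^n \to \r$ (in particular under $+$, $\cdot$, and scalar multiplication, and contains the constant formulae) and the assignment $\varphi \mapsto f_\varphi$ respects these operations. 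Since by hypothesis $\mathcal{A}$ separates points, Stone--Weierstrass yields that $\mathcal{A}$ is uniformly dense in $C(S_{\vec x}(T), \r)$.

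Now given a continuous $f : S_{\vec x}(T) \to \r$, I would extract a sequence $(\varphi_n)$ from $\F_{\vec x}^0$ with $f_{\varphi_n} \to f$ uniformly on $S_{\vec x}(T)$. The key observation is that, for all $\varphi, \psi \in \F_{\vec x}^0$,
$$d(\varphi, \psi) = \sup_{p \in S_{\vec x}(T)} |f_\varphi(p) - f_\psi(p)|,$$
because every tuple $\vec a$ in a model $\mathbf{M} \models T$ realizes some $p \in S_{\vec x}(T)$ and, conversely, every such $p$ is realized in some model. Consequently, uniform convergence of $(f_{\varphi_n})$ forces $(\varphi_n)$ to be Cauchy in $(\F_{\vec x}^0, d)$, so $(\varphi_n)$ represents a $T$-formula $\varphi \in \mathcal{F}_{\vec x}$. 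For any $p = \tp^{\mathbf{M}}(\vec a)$, one then computes
$$f_\varphi(p) = \varphi^{\mathbf{M}}(\vec a) = \lim_n \varphi_n^{\mathbf{M}}(\vec a) = \lim_n f_{\varphi_n}(p) = f(p),$$
so $f = f_\varphi$ as required. The main obstacle, such as it is, is purely bookkeeping: identifying the pseudometric on $\F_{\vec x}^0$ (defined as a sup over models and tuples) with uniform convergence on the type space; once that identification is made, the rest is routine.
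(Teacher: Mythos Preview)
Your proof is correct and follows exactly the approach the paper indicates: the paper merely notes that the functions $f_\varphi$ separate points and then states that Stone--Weierstrass yields the theorem, without spelling out any further details. Your write-up is a careful elaboration of precisely that argument, including the identification of the pseudometric $d$ on $\F_{\vec x}^0$ with the sup-norm on $S_{\vec x}(T)$, which is the only point requiring any thought.
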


%

\subsection{Definability relative to a theory}

In order to smoothly state the main theorem of this section, we need to collect a few elementary (if not slightly cumbersome) definitions and examples.  

In the following definition, we view $\Mod(T)$ as a category whose objects are models of $T$ and whose morphism are elementary embeddings.  We also let $\Met$ be the category whose objects are bounded metric spaces and whose morphisms are isometric embeddings.

\begin{df}
A functor $X:\Mod(T)\to \Met$ is called a \textbf{$T$-functor over $\vec x$} if:
\begin{itemize}
\item For every $\mathbf{M}\models T$, $X(\mathbf{M})$ is a closed subset of $\mathbf{M}^{\vec x}$, and 
\item $X$ is restriction on morphisms.
\end{itemize}
\end{df}

A natural source of $T$-functors:

\begin{df}
Given a $T$-formula $\varphi(\vec x)$, its \textbf{zeroset} is the $T$-functor $Z(\varphi)$ given by $$Z(\varphi)(\mathbf{M}):=Z(\varphi^{\mathbf{M}}):=\{\vec a \in \mathbf{M}^{\vec x} \ : \ \varphi^{\mathbf{M}}(\vec a)=0\}.$$
\end{df}

We will also need the following notion:

\begin{df}
A \textbf{$T$-function over $\vec x$} is simply a function whose domain is the set of pairs $(\mathbf{M},\vec a)$, with $\mathbf{M}\models T$ and $\vec a\in \mathbf{M}^{\vec x}$, and whose codomain is a bounded subset of $\r$.  A $T$-function is \textbf{nonnegative} if its co-domain is contained in the set of nonnegative reals.
\end{df}

A natural source of $T$-functions:

\begin{ex}
The interpretation of a $T$-formula is a $T$-function.  If the $T$-function $\Phi$ is the interpretation of a $T$-formula $\varphi$, we will say that $\Phi$ is \textbf{realized} by $\varphi$.  If $\Phi$ is nonnegative, then we also say that $\varphi$ is nonnegative.
\end{ex}

\begin{ex}\label{quantifyfunctor}
Suppose that $X$ is a $T$-functor over $\vec x$.
\begin{itemize}
\item For any $T$-function $\Phi$ over $(\vec x,\vec y)$, we have the $T$-functions $$\sup_{\vec x}\Phi(\vec x,\vec y) \quad \text{ and }\quad \inf_{\vec x}\Phi(\vec x,\vec y)$$ over $\vec y$ defined by mapping the pair $(\mathbf{M},\vec b)$ to $$\sup_{\vec a\in X(\mathbf{M})}\Phi(\mathbf{M},\vec a,\vec b)\quad \text{ and }\quad\inf_{\vec a\in X(\mathbf{M})}\Phi(\mathbf{M},\vec a,\vec b)$$ respectively.
\item A particular instance of the previous item is the case that $\Phi$ is simply the interpretation of the $T$-formula $d(\vec x,\vec y)$ (where $\vec x$ and $\vec y$ range over the same product of sorts).  In this case, we see that the $T$-function which maps $(\mathbf{M},\vec b)$ to $d(\vec b,X(\mathbf{M}))$ is a nonnegative $T$-function, which we write suggestively as $d(\vec x,X)$.
\end{itemize}
\end{ex}

\begin{df}
We say that a nonnegative $T$-function $\Phi$ over $\vec x$  is \textbf{almost-near} if, for every $\epsilon>0$, there is $\delta>0$ such that, for all $(\mathbf{M},\vec a)$, if $\Phi(\mathbf{M},\vec a)<\delta$, then there is $\vec b\in \mathbf{M}^{\vec x}$ such that $d(\vec a,\vec b)\leq \epsilon$ and with $\Phi(\mathbf{M},\vec b)=0$.  We say that a $T$-formula is almost-near if its interpretation is almost-near.\footnote{In \cite{omitting}, almost-near formulae are called \emph{stable} and in \cite{munster} they are called \emph{weakly stable}.  While these terminologies arise from corresponding terminology in the operator algebra literature, they are infinitely confusing to model-theorists (as they have nothing to do with the model-theoretic notion of stable formula) and thus we prefer the more suggestive term almost-near.}
\end{df}

\begin{ex}\label{distancean}
Given a $T$-functor $X$ over $\vec x$, the $T$-function $d(\vec x,X)$ is an almost-near $T$-function (simply take $\delta=\epsilon$).
\end{ex}

With all of the terminology established, we can now neatly state the main theorem of this section:

\begin{thm}\label{maindefinabilitytheorem}
Suppose that $X$ is a $T$-functor over $\vec x$.  Then the following are equivalent:
\begin{enumerate}
\item For all $T$-formulae $\psi(\vec x, \vec y)$, the $T$-functions $\sup_{\vec x\in X}\psi(\vec x,\vec y)$ and $\inf_{\vec x\in X}\psi(\vec x,\vec y)$ are realized by $T$-formulae.
\item The $T$-function $d(\vec x,X)$ is realized by a $T$-formula.
\item There is an almost-near $T$-formula $\varphi(\vec x)$ such that $X=Z(\varphi)$.
\item For all sets $I$, all families of models $(\mathbf{M}_i)_{i\in I}$ of $T$, and all ultrafilters $\u$ on $I$, we have
$$X(\prod_\u \mathbf{M}_i)=\prod_\u X(\mathbf{M}_i).$$
\item
\begin{enumerate}
\item $X$ is a zeroset, and
\item for any nonnegative $T$-formula $\varphi(\vec x)$ such that $X=Z(\varphi)$, we have that $\varphi$ is almost-near.
\end{enumerate}
\end{enumerate}
\end{thm}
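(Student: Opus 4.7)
The plan is to establish the cycle (1) $\Rightarrow$ (2) $\Rightarrow$ (3) $\Rightarrow$ (1) together with separate equivalences (3) $\Leftrightarrow$ (4) and (3) $\Leftrightarrow$ (5). Three implications are essentially immediate: (1) $\Rightarrow$ (2) follows by applying (1) with $\psi(\vec x,\vec y) := d(\vec x,\vec y)$ and taking the infimum; (2) $\Rightarrow$ (3) holds because $\varphi(\vec x) := d(\vec x, X)$ has zeroset $X$ (each $X(\mathbf{M})$ being closed) and is almost-near by Example~\ref{distancean}; and (5) $\Rightarrow$ (3) is direct, since any nonnegative defining formula is then almost-near by (5)(b).

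The main technical step is (3) $\Rightarrow$ (1). Fix an almost-near nonnegative $T$-formula $\varphi$ with $X = Z(\varphi)$ and an arbitrary $T$-formula $\psi(\vec x,\vec y)$ bounded by $K > 0$ with modulus of uniform continuity $\omega$ in $\vec x$. For each $n \geq 1$, let $\delta_n > 0$ be furnished by the almost-near property at tolerance $1/n$, and pick a continuous nondecreasing $F_n:[0,\infty)\to[0,3K]$ with $F_n(0) = 0$ and $F_n \equiv 3K$ on $[\delta_n, \infty)$. The $T$-formula $\chi_n(\vec x,\vec y) := \psi(\vec x,\vec y) + F_n(\varphi(\vec x))$ then satisfies, uniformly across all $\mathbf{M}\models T$ and all $\vec y$,
\[\inf_{\vec x \in X(\mathbf{M})}\psi(\vec x,\vec y) - \omega(1/n) \ \leq\ \inf_{\vec x}\chi_n(\vec x,\vec y) \ \leq\ \inf_{\vec x \in X(\mathbf{M})}\psi(\vec x,\vec y).\]
The right inequality is witnessed at any $\vec x_0 \in X(\mathbf{M})$ (where $F_n(\varphi(\vec x_0)) = 0$); the left splits by cases -- points with $\varphi(\vec x) \geq \delta_n$ yield $\chi_n \geq 2K$, dominating the right-hand side, while points with $\varphi(\vec x) < \delta_n$ lie within $1/n$ of some $\vec b \in X(\mathbf{M})$ by the almost-near property, forcing $\psi(\vec x,\vec y) \geq \inf_{\vec x_0 \in X(\mathbf{M})}\psi(\vec x_0, \vec y) - \omega(1/n)$. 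Since each $\inf_{\vec x}\chi_n$ is a $T$-formula and the sequence converges uniformly over $\Mod(T)$, the limit $\inf_{\vec x \in X}\psi$ is a $T$-formula; $\sup_{\vec x \in X}$ is handled dually.

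For (3) $\Leftrightarrow$ (4), the direction (3) $\Rightarrow$ (4) combines \L{}os's theorem (which yields $\prod_\u X(\mathbf{M}_i) \subseteq X(\prod_\u \mathbf{M}_i)$) with a diagonal argument for the reverse inclusion: if $\lim_\u \varphi^{\mathbf{M}_i}(\vec a_i) = 0$, the almost-near property allows replacing $\vec a_i$ on ultrafilter-large sets by $\vec b_i \in X(\mathbf{M}_i)$ with $d(\vec a_i, \vec b_i)$ tending to $0$ along $\u$, and standard diagonalization produces $(\vec b_i)^\bullet = (\vec a_i)^\bullet$ with $\vec b_i \in X(\mathbf{M}_i)$ for $\u$-a.e.\ $i$. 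For (4) $\Rightarrow$ (2), I would invoke the standard ultraproduct characterization of $T$-formulae: a bounded uniformly continuous $T$-function is realized by a $T$-formula precisely when it commutes with ultraproducts; the nontrivial direction proceeds by descending the $T$-function to a function on $S_{\vec x}(T)$ via a saturation argument and then invoking Theorem~\ref{SW}. The $T$-function $\Phi(\mathbf{M}, \vec a) := d(\vec a, X(\mathbf{M}))$ is $1$-Lipschitz and bounded, and (4) yields its commutation with ultraproducts directly.

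Finally, for (3) $\Rightarrow$ (5), part (a) is tautological, and for (b) suppose some nonnegative $T$-formula $\varphi'$ with $X = Z(\varphi')$ fails to be almost-near; then there exist $\e > 0$ and pairs $(\mathbf{M}_n, \vec a_n)$ with $\varphi'(\mathbf{M}_n, \vec a_n) \to 0$ yet $d(\vec a_n, X(\mathbf{M}_n)) \geq \e$ for every $n$. In $\prod_\u \mathbf{M}_n$ (for any nonprincipal $\u$ on $\n$), \L{}os's theorem together with $X = Z(\varphi')$ force $(\vec a_n)^\bullet \in X(\prod_\u \mathbf{M}_n)$, while (4) (already derived from (3)) gives $d((\vec a_n)^\bullet, X(\prod_\u \mathbf{M}_n)) \geq \e$, a contradiction. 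The main obstacles are calibrating the penalty functions $F_n$ in (3) $\Rightarrow$ (1) against the almost-near modulus, and justifying the ultraproduct-commutation criterion used in (4) $\Rightarrow$ (2), whose proof ultimately rests on descending to the compact type space $S_{\vec x}(T)$ and applying Theorem~\ref{SW}.
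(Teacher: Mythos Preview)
Your proof is correct and covers all the implications, but it is organized differently from the paper and handles the hardest step by a different mechanism.

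The paper proves the single cycle $(1)\Rightarrow(2)\Rightarrow(3)\Rightarrow(4)\Rightarrow(5)\Rightarrow(1)$; you instead close the short loop $(1)\Rightarrow(2)\Rightarrow(3)\Rightarrow(1)$ and then attach $(3)\Leftrightarrow(4)$ and $(3)\Leftrightarrow(5)$ separately. The substantive difference is in the technical core. For $(5)\Rightarrow(1)$, the paper quotes the analysis lemma \cite[Lemma~2.10/Remark~2.12]{BBHU} to manufacture a single continuous $\alpha$ with $d(\vec a, X(\mathbf{M}))\leq \alpha(\varphi^{\mathbf{M}}(\vec a))$ uniformly, and then exhibits one explicit formula $\inf_{\vec z}\bigl[\psi(\vec x,\vec z)+\beta(\alpha(\varphi(\vec z)))\bigr]$ realizing $\inf_{\vec x\in X}\psi$. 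Your $(3)\Rightarrow(1)$ instead builds a Cauchy sequence $\bigl(\inf_{\vec x}\chi_n\bigr)_n$ of penalty-perturbed formulae converging uniformly to the target; this avoids citing the BBHU lemma at the cost of not producing a single closed-form witness. Both are clean; the paper's version is marginally more useful when one wants an explicit formula downstream, while yours is more self-contained.

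Your $(4)\Rightarrow(2)$ via ``a $T$-function commuting with ultraproducts is realized by a $T$-formula'' is exactly what the paper proves concretely as $(4)\Rightarrow(5)\mathrm{(a)}$: both arguments descend to $S_{\vec x}(T)$ and invoke Theorem~\ref{SW}, so there is no real divergence here. Your $(3)\Rightarrow(5)\mathrm{(b)}$ matches the paper's $(4)\Rightarrow(5)\mathrm{(b)}$ essentially verbatim. One minor point worth tightening: in your $(3)\Rightarrow(1)$ estimate, the right-hand inequality $\inf_{\vec x}\chi_n\leq \inf_{X(\mathbf{M})}\psi$ tacitly assumes $X(\mathbf{M})\neq\emptyset$; when $X(\mathbf{M})=\emptyset$ the almost-near property forces $\varphi^{\mathbf{M}}\geq\delta_n$ everywhere, so $\chi_n\geq 2K$ uniformly and the claim holds under the usual empty-infimum convention---but you should say so.
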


\begin{proof}
(1) implies (2) is immediate and (2) implies (3) follows from Example \ref{distancean}.

(3) implies (4):  Suppose that $X=Z(\varphi)$ for an almost-near $T$-formula $\varphi(\vec x)$ and fix a family $(\mathbf{M}_i)_{i\in I}$ of models of $T$ and an ultrafilter $\u$ on $I$.  Set $\mathbf{M}:=\prod_\u \mathbf{M}_i$.  Notice that $\prod_\u X(\mathbf{M}_i)\subseteq X(\mathbf{M})$ always holds by the \L os theorem, and, in fact, $\prod_\u X(\mathbf{M}_i)$ is a closed subset of $X(\mathbf{M})$.  Now consider $\vec a=(\vec a_i)^\bullet\in X(\mathbf{M})$ and fix $\epsilon>0$.  Take $\delta>0$ witnessing that $\varphi$ is an almost-near formula for $\epsilon$.  Since $\varphi^{\mathbf{M}}(\vec a)=0$, there is $I\in \u$ such that $\varphi^{\mathbf{M}}(\vec a_i)<\delta$ for $i\in I$.  By choice of $\delta$, for $i\in I$ we may find $\vec b_i\in X(\mathbf{M}_i)$ such that $d(\vec a_i,\vec b_i)\leq \frac{1}{n}$.  Set $\vec b:=(\vec b_i)^\bullet \in \prod_\u X(\mathbf{M}_i)$ (with $\vec b_i\in \mathbf{M}_i$ chosen arbitrarily for $i\notin I$).  It follows that $d(\vec a,\vec b)\leq \epsilon$.  Since $\prod_\u X(\mathbf{M}_i)$ is closed, it follows that $\vec a\in \prod_\u X(\mathbf{M}_i)$.  

(4) implies (5):  We assume that (4) holds and first prove (a).  Note that it suffices to show that the $T$-function $d(\vec x,X)$ is realized by a $T$-formula.  Towards this end, by Theorem \ref{SW}, it suffices to show that there is a continuous function $f:S_{\vec x}(T)\to \r$ given by $f(p):=d(\vec a,X(\mathbf{M}))$ for any $\mathbf{M}\models T$ and $\vec a\in \mathbf{M}^{\vec x}$ realizing $p$.  

In order to show that the function $f$ from the previous paragraph is well-defined, suppose, towards a contradiction, that there are $\mathbf{M},\mathbf{N}\models T$, $\vec a\in \mathbf{M}^{\vec x}$ and $\vec b\in \mathbf{N}^{\vec x}$ such that $\vec a$ and $\vec b$ both realize $p$ yet $d(\vec a,X(\mathbf{M}))<d(\vec b,X(\mathbf{N}))$.  Fix a nonprincipal ultrafilter $\u$ on $\n$ and take an elementary embedding $i:\mathbf{M}\to \mathbf{N}^{\u}$ with $i(\vec a)=\vec b$.  Take $\vec c\in X(\mathbf{M})$ such that $d(\vec a,\vec c)<d(\vec b,X(\mathbf{N}))$.  We then arrive at the contradiction
$$d(\vec a,\vec c)=d(\vec b,i(\vec c))\geq d(\vec b,X(\mathbf{N}^\u))=d(\vec b,X(\mathbf{N})^\u)\geq d(\vec b,X(\mathbf{N})).$$

It remains to prove that $f$ is continuous.  We thus suppose that $(p_i)_{i\in I}$ is a net from $S_{\vec x}(T)$ and $\u$ is an ultrafilter on $I$ such that $\lim_\u p_i=p$; we must show that $\lim_\u f(p_i)=f(p)$.  For each $i\in I$, take $\mathbf{M}_i\models T$ and $\vec a_i\in \mathbf{M}_i$ realizing $p_i$.  Set $\mathbf{M}:=\prod_\u \mathbf{M}_i$ and set $\vec a:=(\vec a_i)^\bullet$.  Notice that $\vec a$ realizes $p$.  We thus have
$$f(p)=d(\vec a,X(\mathbf{M}))=d(\vec a,\prod_\u X(\mathbf{M}_i))=\lim_\u d(\vec a_i,X(\mathbf{M}_i))=\lim_\u f(p_i).$$
This finishes the proof of (a).


We now prove (b).  Suppose that $\varphi$ is nonnegative, $X=Z(\varphi)$, and yet $\varphi$ is not almost-near.  There is thus some $\epsilon>0$ such that, for all $n\in \n$, there are $\mathbf{M}_n\models T$ and $\vec a_n\in \mathbf{M}_n$ such that $\varphi^{\mathbf{M}_n}(\vec a_n)<\frac{1}{n}$ and yet $d(\vec a_n,X(\mathbf{M}_n))\geq \epsilon$.  It follows that $\vec a\in X(\prod_\u \mathbf{M}_n)$, whence, by (4), we have that, for $\u$-almost all $n$, there are $\vec b_n\in X(\mathbf{M}_n)$ such that $d(\vec a_n,\vec b_n)<\epsilon$, a contradiction.

(5) implies (1):  We only prove the $\inf$ case, the $\sup$ case being similar.  We first need an elementary fact from analysis \cite[Lemma 2.10/Remark 2.12]{BBHU}:  given any function $\Delta:(0,1]\to (0,1]$, there is an increasing, continuous function $\alpha:[0,1]\to [0,1]$ such that:  for any set $Y$ and functions $F,G:Y\to [0,1]$ satisfying
$$(\forall \epsilon>0)(\forall x\in Y)(F(x)\leq \Delta(\epsilon)\Rightarrow G(x)\leq \epsilon), \quad (\dagger)$$  we have $G(x)\leq \alpha(F(x))$ for all $x\in Y$.  

Next fix a nonnegative $T$-formula $\varphi(\vec x)$ such that $X=Z(\varphi)$.  By (5), $\varphi$ is an almost-near formula, whence there is a function $\Delta:(0,1]\to (0,1]$ for which $(\dagger)$ holds whenever $Y:=\mathbf{M}^{\vec x}$, $F(\vec x):=\varphi^{\mathbf{M}}(\vec x)$, and $G(\vec x):=d(\vec x,X(\mathbf{M}))$ (for $\mathbf{M}\models T$).  Consequently, there is an $\alpha$ as in the previous paragraph such that for all $\mathbf{M}\models T$ and all $\vec a\in \mathbf{M}^{\vec x}$, we have $d(\vec a,X(\mathbf{M}))\leq \alpha(\varphi^{\mathbf{M}}(\vec a))$.

Now fix an arbitrary $T$-formula $\psi(\vec x,\vec y)$.  By quoting the first paragraph again, there is a function $\beta$ such that, for all $\mathbf{M}\models T$ and all $\vec a\in \mathbf{M}^{\vec x}$ and $\vec b,\vec c\in \mathbf{M}^{\vec y}$, we have $|\psi^{\mathbf{M}}(\vec a,\vec b)-\psi^{\mathbf{M}}(\vec a,\vec c)|\leq \beta(d(\vec b,\vec c))$.  It is now straightforward to check that $\inf_{y\in X}\psi(\vec x,\vec y)$ is realized by the $T$-formula $\inf_{\vec z}[\psi(\vec x,\vec z)+\beta(\alpha(\varphi(z)))]$.
\end{proof}


\begin{df}
A $T$-functor satisfying any of the equivalent conditions in the previous theorem is called a \textbf{$T$-definable set}.
\end{df}

\begin{nrmk}
In the literature, item (2) in the previous definition is often given as the definition of definable set.  It is our opinion that this choice of definition may seem a bit obscure at first.  However, we hope that it is clear that items (1) and (4) are desirable properties of a $T$-functor. 
\end{nrmk}

%
%


\subsection{Definability in a structure}
For our purposes, it will also be convenient to have a notion of a definable subset of a particular structure.  For the rest of this section, we abandon our fixed theory $T$ from above and instead fix a structure $\mathbf{M}$ and a subset $A\subseteq \mathbf{M}$.  Suppose that $(\varphi_n(\vec x))$ is a sequence of $L(A)$-formulae that are uniformly Cauchy in $\mathbf{M}$, that is, for all $\epsilon>0$, there is $N\in \n$ such that for all $m,n\geq N$ and all $\vec a\in \mathbf{M}^{\vec x}$, we have that $|\varphi_m^{\mathbf{M}}(\vec a)-\varphi_n^{\mathbf{M}}(\vec a)|\leq \epsilon$.  Then the same fact remains true in any model of $\Th(\m_A)$, whence we have that $\varphi:=(\varphi_n(x))$ is a $\Th(\mathbf{M}_A)$-formula.  We will also say that $\varphi$ is a \textbf{formula in $\mathbf{M}$ over $A$}.  It is clear that in any model $\mathbf{N}$ of $\Th(\mathbf{M}_A)$, one has a natural interpretation $\varphi^{\mathbf{N}}$ of $\varphi$ in $\mathbf{N}$.

\begin{df}
A closed subset $X\subseteq \mathbf{M}^{\vec x}$ is called \textbf{definable in $\mathbf{M}$ over $A$} if the function $\vec x\mapsto d(\vec x,X):\mathbf{M}^{\vec x}\to \r$ is the interpretation of a formula in $\mathbf{M}$ over $A$.
\end{df}

Some of the equivalences in Theorem \ref{maindefinabilitytheorem} remain true in this local context:

\begin{thm}\label{localdefinabilitytheorem}
Suppose that $X\subseteq \mathbf{M}^{\vec x}$ is a closed subset and $A\subseteq \mathbf{M}$.  The following are equivalent:
\begin{enumerate}
\item $X$ is definable in $\mathbf{M}$ over $A$.
\item For all formulae $\psi(\vec x, \vec y)$ in $\mathbf{M}$ over $A$, there are formulae in $\mathbf{M}$ over $A$ whose interpretations in $\mathbf{M}$ coincide with the functions
$$\sup_{\vec x\in X}\psi^{\mathbf{M}}(\vec x,\vec y) \quad \text{ and } \quad \inf_{\vec x \in X}\psi^{\mathbf{M}}(\vec x,\vec y).$$
\item There is a formula $\varphi(\vec x)$ in $\mathbf{M}$ over $A$ with $X=Z(\varphi^{\mathbf{M}})$ and such that: for all $\epsilon>0$, there is a $\delta>0$ such that, for all $\vec a\in \mathbf{M}^{\vec x}$, 
$$\varphi^{\mathbf{M}}(\vec a)<\delta\Rightarrow d(\vec a,X)\leq \epsilon.$$
\end{enumerate}
\end{thm}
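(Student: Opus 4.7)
The plan is to mirror the relevant parts of the proof of Theorem \ref{maindefinabilitytheorem} inside the single structure $\mathbf{M}$ rather than across all models of a theory; the absence of ultrapower conditions eliminates most of the work and leaves only three conditions. I would establish the cycle $(2) \Rightarrow (1) \Rightarrow (3) \Rightarrow (2)$.

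Two of the directions are essentially formal. For $(1) \Rightarrow (3)$, take $\varphi(\vec x) := d(\vec x, X)$, which is a formula in $\mathbf{M}$ over $A$ by hypothesis; since $X$ is closed, $Z(\varphi^{\mathbf{M}}) = X$, and the almost-near condition in $\mathbf{M}$ holds trivially with $\delta := \epsilon$. For $(2) \Rightarrow (1)$, apply (2) to $\psi(\vec y, \vec x) := d(\vec x, \vec y)$; then $\inf_{\vec x \in X}\psi^{\mathbf{M}}(\vec y, \vec x)$ is exactly the function $\vec y \mapsto d(\vec y, X)$ on $\mathbf{M}^{\vec y}$.

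The substantive implication is $(3) \Rightarrow (2)$. Fix $\varphi$ and $\Delta:(0,1]\to(0,1]$ witnessing (3). Applying the BBHU lemma exactly as in $(5) \Rightarrow (1)$ of the proof of Theorem \ref{maindefinabilitytheorem} produces an increasing continuous function $\alpha:[0,1]\to[0,1]$ with $\alpha(0)=0$ such that $d(\vec a, X) \leq \alpha(\varphi^{\mathbf{M}}(\vec a))$ for every $\vec a \in \mathbf{M}^{\vec x}$. Given a formula $\psi(\vec x,\vec y)$ in $\mathbf{M}$ over $A$, let $\beta:[0,1]\to[0,1]$ be a continuous modulus of uniform continuity of $\psi^{\mathbf{M}}$ in the $\vec x$-variable with $\beta(0)=0$. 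I would then verify the identity
$$\inf_{\vec x \in X}\psi^{\mathbf{M}}(\vec x,\vec y) \;=\; \inf_{\vec z}\bigl[\psi^{\mathbf{M}}(\vec z,\vec y) + \beta(\alpha(\varphi^{\mathbf{M}}(\vec z)))\bigr];$$
the $\leq$ direction uses any $\vec z \in X$ (both $\alpha$ and $\beta$ vanish at $0$), while the $\geq$ direction, for each $\vec z$ and $\eta > 0$, picks $\vec x \in X$ with $d(\vec z,\vec x) \leq \alpha(\varphi^{\mathbf{M}}(\vec z)) + \eta$ and lets $\eta \to 0$ by continuity of $\beta$. Since $\alpha$ and $\beta$ are continuous and $\varphi,\psi$ are formulae in $\mathbf{M}$ over $A$, the right-hand side is the interpretation of a formula in $\mathbf{M}$ over $A$; the $\sup$ case is symmetric. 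The sole technical point is the appeal to the BBHU lemma to convert the $(\forall \epsilon)(\exists \delta)$ statement in (3) into a single continuous $\alpha$, and since this is already handled in the global theorem, it is simply imported here.
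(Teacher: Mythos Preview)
Your proposal is correct and is exactly what the paper intends: the paper does not give its own proof of Theorem~\ref{localdefinabilitytheorem} but simply writes ``We leave the proof of the previous theorem to the reader as it follows many of the ideas in the proof of Theorem~\ref{maindefinabilitytheorem}.'' Your cycle $(2)\Rightarrow(1)\Rightarrow(3)\Rightarrow(2)$ is the natural local adaptation, and your use of the BBHU lemma for $(3)\Rightarrow(2)$ mirrors the paper's $(5)\Rightarrow(1)$ precisely.

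One cosmetic point: in your verification of the identity $\inf_{\vec x\in X}\psi^{\mathbf M}(\vec x,\vec y)=\inf_{\vec z}[\psi^{\mathbf M}(\vec z,\vec y)+\beta(\alpha(\varphi^{\mathbf M}(\vec z)))]$, the labels ``$\leq$ direction'' and ``$\geq$ direction'' appear to be swapped. Plugging in $\vec z\in X$ shows the right-hand side is $\leq$ the left-hand side, while the approximation argument (choosing $\vec x\in X$ close to $\vec z$) shows the left-hand side is $\leq$ the right-hand side. The content of each argument is correct; only the tags are reversed.
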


We leave the proof of the previous theorem to the reader as it follows many of the ideas in the proof of Theorem \ref{maindefinabilitytheorem}.  A key distinction in the local theory of definability is that the local analog of item (5) in Theorem \ref{maindefinabilitytheorem} is no longer true.  First, a definition:   

\begin{df}
Suppose that $\varphi$ is a formula in $\mathbf{M}$ over $A$.  We will say that $Z(\varphi^{\mathbf{M}})$ is \textbf{$\varphi$-definable} if for every $\epsilon>0$, there is $\delta>0$ such that, for all $a\in \mathbf{M}^{\vec x}$, if $\varphi(\vec a)^{\mathbf{M}}<\delta$, then $d(\vec a,Z(\varphi^{\mathbf{M}}))\leq \epsilon$.
\end{df}

Item (3) of Theorem \ref{localdefinabilitytheorem} can be recast in this new terminology:

\begin{cor}
Suppose that $X\subseteq \mathbf{M}^{\vec x}$ is a closed subset and $A\subseteq \mathbf{M}$.  Then $X$ is definable if and only if it is $\varphi$-definable for some formula $\varphi(\vec x)$ in $\mathbf{M}$ over $A$.
\end{cor}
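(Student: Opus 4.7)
The proposed corollary is essentially a cosmetic rephrasing of the equivalence $(1) \Leftrightarrow (3)$ in Theorem \ref{localdefinabilitytheorem} using the newly introduced terminology, so the plan is simply to unpack the definition of $\varphi$-definability and appeal directly to that theorem.

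First I would observe that asserting ``$X$ is $\varphi$-definable for some formula $\varphi(\vec x)$ in $\mathbf{M}$ over $A$'' bundles together two pieces of data: (a) the equality $X = Z(\varphi^{\mathbf{M}})$ (built into the definition, since only zero sets of $\varphi$ are declared $\varphi$-definable), and (b) the quantitative approximation property that for every $\epsilon > 0$ there exists $\delta > 0$ such that $\varphi^{\mathbf{M}}(\vec a) < \delta$ implies $d(\vec a, X) \leq \epsilon$ for all $\vec a \in \mathbf{M}^{\vec x}$. Together (a) and (b) are literally the hypotheses listed in item (3) of Theorem \ref{localdefinabilitytheorem}.

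For the forward direction, I would assume $X$ is definable in $\mathbf{M}$ over $A$ and invoke Theorem \ref{localdefinabilitytheorem} to produce a formula $\varphi(\vec x)$ in $\mathbf{M}$ over $A$ satisfying (a) and (b); this is, by definition, a witness that $X$ is $\varphi$-definable. For the converse, I would assume there is some $\varphi$ witnessing $\varphi$-definability of $X$; then $X = Z(\varphi^{\mathbf{M}})$ is automatically closed (since $\varphi^{\mathbf{M}}$ is continuous), so the hypotheses of item (3) of Theorem \ref{localdefinabilitytheorem} are satisfied, and the theorem yields that $X$ is definable in $\mathbf{M}$ over $A$.

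There is no real obstacle, since the whole content is already packaged inside Theorem \ref{localdefinabilitytheorem}; the only work is to note that the definition of $\varphi$-definable was designed precisely to absorb condition (3). I might conclude with a one-line remark that the main advantage of the new terminology is psychological: it lets us locate a specific formula $\varphi$ whose zero set is the object of interest and then reduce definability to a single estimate on how small values of $\varphi$ approximate that zero set, a formulation that will be convenient in the applications to spectral gap subsequently.
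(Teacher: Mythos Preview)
Your proposal is correct and matches the paper's own treatment: the paper does not even give a separate proof, stating the corollary as a direct recasting of item (3) of Theorem \ref{localdefinabilitytheorem} in the language of $\varphi$-definability, which is precisely what you spell out.
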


Unfortunately, it is not true that if $\varphi$ is a formula in $\mathbf{M}$ over $A$ and $Z(\varphi^{\mathbf{M}})$ is definable, then it is $\varphi$-definable.\footnote{It is a good exercise for the reader to see where the proof of the corresponding part of Theorem \ref{maindefinabilitytheorem} breaks down in the local situation.}  In fact, we will see concrete instances of this distinction in Remarks \ref{unfortunate1} and \ref{unfortunate2} below.  However, the following  characterization of $\varphi$-definability shows that it is a natural notion:

\begin{thm}\label{ultrapowerlocaldefinability}
Suppose that $\varphi$ is a formula in $\mathbf{M}$ over $A$.  Then $Z(\varphi)$ is $\varphi$-definable if and only if $Z(\varphi^{\mathbf{M}})^\u=Z(\varphi^{\mathbf{M}^\u})$ for every ultrafilter $\u$.
\end{thm}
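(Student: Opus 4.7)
The plan is to prove both directions by standard ultraproduct arguments, with the forward direction hinging on a representative-selection argument and the reverse on constructing an explicit counterexample to the equality.

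For the ``only if'' direction, fix an ultrafilter $\u$ on an index set $I$ and assume $Z(\varphi^{\mathbf{M}})$ is $\varphi$-definable. The inclusion $Z(\varphi^{\mathbf{M}})^\u \subseteq Z(\varphi^{\mathbf{M}^\u})$ is immediate from \L os's theorem, so the content is in the reverse containment. Given $\vec a = (\vec a_i)^\bullet \in Z(\varphi^{\mathbf{M}^\u})$, I would use $\lim_\u \varphi^{\mathbf{M}}(\vec a_i) = 0$ together with the $\varphi$-definability estimate to conclude that $\lim_\u d(\vec a_i, Z(\varphi^{\mathbf{M}})) = 0$: for any $\epsilon>0$, the $\delta$ supplied by $\varphi$-definability forces the $\u$-large set $\{i : \varphi^{\mathbf{M}}(\vec a_i) < \delta\}$ to be contained in $\{i : d(\vec a_i, Z(\varphi^{\mathbf{M}})) \leq \epsilon\}$. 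For each $i$ with $\vec a_i \in Z(\varphi^{\mathbf{M}})$ I set $\vec b_i := \vec a_i$, and otherwise I choose $\vec b_i \in Z(\varphi^{\mathbf{M}})$ with $d(\vec a_i, \vec b_i) < 2\, d(\vec a_i, Z(\varphi^{\mathbf{M}}))$, available by the definition of infimum. Then $\lim_\u d(\vec a_i, \vec b_i) = 0$, so $(\vec b_i)^\bullet = \vec a$, exhibiting $\vec a$ as an element of $Z(\varphi^{\mathbf{M}})^\u$.

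For the ``if'' direction, I would argue by contrapositive. If $Z(\varphi^{\mathbf{M}})$ fails to be $\varphi$-definable, there is $\epsilon > 0$ such that for each $n \in \n$ one can pick $\vec a_n \in \mathbf{M}^{\vec x}$ with $\varphi^{\mathbf{M}}(\vec a_n) < 1/n$ but $d(\vec a_n, Z(\varphi^{\mathbf{M}})) > \epsilon$. Fix a nonprincipal ultrafilter $\u$ on $\n$ and set $\vec a := (\vec a_n)^\bullet \in \mathbf{M}^\u$. By \L os's theorem $\varphi^{\mathbf{M}^\u}(\vec a) = \lim_\u \varphi^{\mathbf{M}}(\vec a_n) = 0$, so $\vec a \in Z(\varphi^{\mathbf{M}^\u})$; however, were $\vec a$ also in $Z(\varphi^{\mathbf{M}})^\u$, represented say by $(\vec b_n)$ with $\vec b_n \in Z(\varphi^{\mathbf{M}})$ on a $\u$-large set, then $\lim_\u d(\vec a_n, \vec b_n) = 0$ would furnish a $\u$-large set of indices on which $d(\vec a_n, Z(\varphi^{\mathbf{M}})) \leq d(\vec a_n, \vec b_n) < \epsilon$, contradicting our choice.

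The main obstacle, and the only real subtlety, is the edge case $Z(\varphi^{\mathbf{M}}) = \emptyset$ in the forward direction, where the selection of $\vec b_i$ breaks down. Here $\varphi$-definability with empty zeroset forces $\varphi^{\mathbf{M}}$ to be bounded below by some positive constant (take any $\epsilon$, and the associated $\delta$ becomes such a lower bound since otherwise some $\vec c$ would satisfy $\varphi^{\mathbf{M}}(\vec c)<\delta$ without a witness in the empty set). Then by \L os the function $\varphi^{\mathbf{M}^\u}$ is also bounded below by that constant, so $Z(\varphi^{\mathbf{M}^\u}) = \emptyset$ as well and the required equality is trivial. Conceptually, the whole proof rests on the observation that the quantitative closeness condition defining $\varphi$-definability is precisely what is needed to convert ultralimit-zero solutions into genuine zeros up to a perturbation vanishing along $\u$.
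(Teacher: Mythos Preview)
The paper does not actually prove this theorem, instead leaving it to the reader with the remark that it is ``extremely similar to the earlier proofs in this section'' (namely the implications (3)$\Rightarrow$(4) and (4)$\Rightarrow$(5)(b) of Theorem~\ref{maindefinabilitytheorem}); your argument is precisely the local specialization of those arguments to a fixed structure and is correct. Your explicit handling of the empty-zeroset edge case is a nice touch that the paper does not spell out.
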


%

We leave the proof of the previous theorem to the reader as it is extremely similar to the earlier proofs in this section.  We end this section by observing that, in \emph{saturated} structures, there is no distinction between definable and $\varphi$-definable:

\begin{prop}\label{saturateddefinable}
Suppose that $\mathbf{M}$ is $\aleph_1$-saturated and $\varphi$ is a formula in $\mathbf{M}$ over $A$.  Then $Z(\varphi)$ is definable if and only if it is $\varphi$-definable.
\end{prop}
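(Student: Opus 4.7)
The plan is straightforward. The direction that $\varphi$-definability implies definability follows immediately from the corollary stated just above the proposition (indeed, it holds without any saturation hypothesis), so I would dispose of it in one line. All the content is in the converse direction, which I would establish by contradiction using $\aleph_1$-saturation.

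Suppose $Z(\varphi^{\mathbf{M}})$ is definable but not $\varphi$-definable. Since it is definable, the function $\vec x \mapsto d(\vec x, Z(\varphi^{\mathbf{M}}))$ is the interpretation of some formula $\psi(\vec x)$ in $\mathbf{M}$ over $A$. Failure of $\varphi$-definability then provides $\epsilon > 0$ and, for each $n \in \n$, a tuple $\vec a_n \in \mathbf{M}^{\vec x}$ with $\varphi^{\mathbf{M}}(\vec a_n) < 1/n$ and $\psi^{\mathbf{M}}(\vec a_n) > \epsilon$.

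Next I would introduce the partial type
$$p(\vec x) := \{\varphi(\vec x) \leq 1/n \ : \ n \in \n\} \cup \{\psi(\vec x) \geq \epsilon\},$$
and observe that the sequence $(\vec a_n)$ witnesses that $p$ is finitely approximately satisfiable in $\mathbf{M}$. The key bookkeeping point is that $p$ mentions only countably many parameters from $A$: by the definition in the previous subsection, each of $\varphi$ and $\psi$ is a Cauchy limit of $L(A)$-formulae, and each such formula involves only finitely many parameters. Thus $\aleph_1$-saturation applies to $p$ and produces $\vec a \in \mathbf{M}^{\vec x}$ realizing it. But $\varphi^{\mathbf{M}}(\vec a) = 0$ forces $\vec a \in Z(\varphi^{\mathbf{M}})$, whence $\psi^{\mathbf{M}}(\vec a) = d(\vec a, Z(\varphi^{\mathbf{M}})) = 0$, contradicting $\psi^{\mathbf{M}}(\vec a) \geq \epsilon$.

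The only mildly delicate step is the Cauchy-limit bookkeeping that lets us invoke the standard form of $\aleph_1$-saturation (which is phrased for types over countable parameter sets); beyond this I do not expect any genuine obstacle, since the proof is essentially a one-shot saturation argument bridging the defect in the local version of Theorem \ref{maindefinabilitytheorem}(5).
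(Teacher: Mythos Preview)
Your argument is correct and is precisely the ``routine'' saturation argument the paper alludes to (the paper gives no proof, merely remarking that it follows from the ideas in the earlier proofs and citing \cite[Remark 9.20]{BBHU}). The one-line reduction of the trivial direction to the preceding corollary, followed by the contradiction via realizing the countable type $\{\varphi(\vec x)\leq 1/n\}\cup\{\psi(\vec x)\geq\epsilon\}$ in the $\aleph_1$-saturated structure, is exactly what is intended; your bookkeeping observation that $\varphi$ and $\psi$, being Cauchy limits of $L(A)$-formulae, each involve only countably many parameters is the only point requiring care, and you handle it correctly.
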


Once again, given the earlier proofs in this section, the proof of the previous proposition is rather routine.  One can also consult \cite[Remark 9.20]{BBHU}.


\section{Spectral gap for unitary group representations}

\subsection{Generalities on unitary group representations}  We begin by fixing some terminology concerning Hilbert spaces.  Throughout, we suppose that $\H$ is a complex Hilbert space.  We let $\B(\H)$ denote the set of bounded operators on $\H$, that is, the set of linear operators $T:\H\to \H$ for which the quantity
$$\|T\|:=\sup\{\|Tx\| \ : \ \|x\|\leq 1\}$$ is finite.  We refer to $\|T\|$ as the \textbf{operator norm} of $T$.  Recall that for $T\in \B(\H)$, the \textbf{adjoint} of $T$ is the unique operator $T^*:\H\to \H$ satisfying $\langle Tx,y\rangle=\langle x,T^*y\rangle$ for all $x,y\in \H$.  It is straightforward to check that $T^*$ is also an element of $\B(\H)$.  $\B(\H)$, equipped with addition, composition, scalar multiplication, and adjoint has the structure of a \textbf{$*$-algebra}.  Finally, we let $\U(H)$ denote the subset of $\B(\H)$ consisting of \textbf{unitary operators} on $\H$, that is, those $T\in \B(\H)$ for which $T^*=T^{-1}$.  Note that $\U(\H)$ is a subgroup of $\B(\H)$ under composition.

\emph{Throughout this section, we fix a \emph{countable} group $\Gamma$}.  A \textbf{unitary representation of $\Gamma$} is simply a group homomorphism $\pi:\Gamma\to \U(\H_{\pi})$ for some Hilbert space $\H_{\pi}$.  

\begin{ex}
Given $\Gamma$, we set $$\ell^2\Gamma:=\{f:\Gamma \to \mathbb{C} \ : \ \sum_{\gamma\in \Gamma}|f(\gamma)|^2<\infty\}.$$  $\ell^2\Gamma$ is naturally a Hilbert space under the inner product $\langle f,g\rangle:=\sum_{\gamma\in \Gamma} f(\gamma)\overline{g(\gamma)}$.  
The \textbf{left-regular representation} of $\Gamma$ is the unitary representation $\lambda_\Gamma:\Gamma\to \U(\ell^2\Gamma)$ given by $(\lambda_\Gamma(\gamma)(f))(\eta):=f(\gamma^{-1}\eta)$.
\end{ex}

Suppose that $\pi$ is a unitary representation of $\Gamma$.  A closed subspace $\mathcal{K}\subseteq \H_\pi$ is called \textbf{invariant under $\pi$} if $\pi(\gamma)(\mathcal{K})\subseteq \mathcal{K}$ for all $\gamma\in \Gamma$.  In this case, we may consider the \textbf{restriction} of $\pi$ to $\mathcal{K}$, denoted $\pi|\mathcal{K}:\Gamma\to \U(\mathcal{K})$.

Recall that, given a subspace $\mathcal{K}$ of $\H$, the \textbf{orthogonal complement} of $\mathcal{K}$ is the set $$\mathcal{K}^{\perp}:=\{\zeta \in \H \ : \ \langle \zeta,\eta\rangle=0 \text{ for all }\eta\in \mathcal{K}\}.$$  Note that $\mathcal{K}^{\perp}$ is a closed subspace of $\H$, whence a Hilbert space in its own right.  If $\H=\H_{\pi}$ for some unitary representation $\pi$ of $\Gamma$ and $\mathcal{K}$ happens to be invariant under $\pi$, then it is readily verified that $\mathcal{K}^{\perp}$ is also invariant under $\pi$.

Given a unitary representation $\pi$ of $\Gamma$, we set $$\Fix(\pi):=\{\zeta \in \H_{\pi} \ : \ \pi(\gamma)(\zeta)=\zeta \text{ for all }\gamma\in \Gamma\}.$$  Note that $\Fix(\pi)$ is an invariant subspace of $\H_{\pi}$.  We say that $\pi$ is \textbf{ergodic} if $\Fix(\pi)=\{0\}$.  We set $\Erg(\pi):=\Fix(\pi)^{\perp}$ and refer to it as the \textbf{ergodic part} of $\pi$.  Of course, $\pi|\Erg(\pi)$ is ergodic.

\begin{ex}\label{finitefix}
$\lambda_\Gamma$ is ergodic if and only if $\Gamma$ is infinite.
\end{ex}

Finally, given a family $(\pi_i)_{i\in I}$ of unitary representations of $\Gamma$ and an ultrafilter $\u$ on $I$, one can consider the unitary representation $\prod_\u \pi_i:\Gamma\to \U(\prod_\u \H_i)$ given by $(\prod_\u \pi_i(\gamma))(\xi_i)^\bullet:=(\pi_i(\gamma)(\xi_i))^\bullet$.  We refer to $\prod_\u \pi_i$ as the \textbf{ultraproduct} of the representations $\pi_i$.  If $\pi_i=\pi$ for all $i$, we write $\pi^\u$ for the \textbf{ultrapower} of $\pi$.

\subsection{Introducing spectral gap}  We leave the proof of the following proposition to the reader:

\begin{prop}\label{spectgap}
Let $\pi:\Gamma\to \U(\H_{\pi})$ be a unitary representation.  The following are equivalent:
\begin{enumerate}
\item There exists finite $F\subseteq \Gamma$ and $c>0$ such that, for all $\zeta\in \H_{\pi}$, we have
$$\max_{\gamma\in F}\|\pi(\gamma)\zeta-\zeta\|\geq c\|\zeta\|.$$
\item For any nonprincipal ultrafilter $\u$, $\pi^\u$ is ergodic.
\item For all $\epsilon>0$, there is a finite $F\subseteq \Gamma$ and $\delta>0$ such that, for all $\zeta\in H_{\pi}$, we have
$$\max_{\gamma\in F}\|\pi(\gamma)\zeta-\zeta\|\leq \delta \Rightarrow \|\zeta\|\leq \epsilon.$$
\end{enumerate}
\end{prop}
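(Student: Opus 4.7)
The plan is to establish the cycle (1) $\Rightarrow$ (3) $\Rightarrow$ (1), and separately (1) $\Leftrightarrow$ (2). The first equivalence is a purely analytic homogeneity argument, while the link with ergodicity of the ultrapower is where the countability of $\G$ becomes essential.

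For (1) $\Rightarrow$ (3), I would take the $F$ and $c$ from (1); given $\e > 0$ I would simply set $\de := c\e$, and observe that $\max_{\gamma\in F}\|\pi(\gamma)\zeta-\zeta\| \leq \de$ together with the lower bound $c\|\zeta\| \leq \max_{\gamma\in F}\|\pi(\gamma)\zeta-\zeta\|$ immediately forces $\|\zeta\|\leq \e$. For the converse (3) $\Rightarrow$ (1), I would apply (3) once with $\e := \tfrac12$, producing a finite $F \subseteq \G$ and a $\de > 0$. Given any nonzero $\zeta$, I would apply (3) to the unit vector $\zeta/\|\zeta\|$: if $\max_{\gamma\in F}\|\pi(\gamma)\zeta - \zeta\| < \de\|\zeta\|$ we would get $\|\zeta/\|\zeta\|\| \leq \tfrac12$, which is absurd. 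Hence $\max_{\gamma\in F}\|\pi(\gamma)\zeta-\zeta\| \geq \de\|\zeta\|$, so (1) holds with $c = \de$.

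For (1) $\Rightarrow$ (2), fix a nonprincipal ultrafilter $\u$ and suppose $(\zeta_i)^\bullet \in \Fix(\pi^{\u})$, meaning $\lim_\u \|\pi(\gamma)\zeta_i - \zeta_i\| = 0$ for every $\gamma \in \G$. Applying (1) coordinatewise with the finite set $F$ and constant $c$ yields $c\|\zeta_i\| \leq \max_{\gamma \in F}\|\pi(\gamma)\zeta_i - \zeta_i\|$; since $F$ is finite, the ultrafilter limit of the right-hand side is $0$, forcing $\|(\zeta_i)^\bullet\| = \lim_\u \|\zeta_i\| = 0$. Hence $\Fix(\pi^{\u}) = \{0\}$ and $\pi^{\u}$ is ergodic.

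The main obstacle, and the one place where the countability hypothesis on $\G$ is genuinely used, is in (2) $\Rightarrow$ (1), which I would prove by contrapositive. Assuming (1) fails, enumerate $\G = \{\gamma_1,\gamma_2,\dots\}$, and for each $n$ apply the negation of (1) to the finite set $F_n := \{\gamma_1,\ldots,\gamma_n\}$ with constant $1/n$: after normalizing, pick a unit vector $\zeta_n \in \H_{\pi}$ with $\max_{\gamma \in F_n}\|\pi(\gamma)\zeta_n - \zeta_n\| < 1/n$. Fix any nonprincipal ultrafilter $\u$ on $\n$ and set $\zeta := (\zeta_n)^\bullet \in \H_{\pi}^{\u}$. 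Then $\|\zeta\| = 1$, and for every fixed $\gamma = \gamma_k$ the inequality $\|\pi(\gamma)\zeta_n - \zeta_n\| < 1/n$ holds on the cofinite set $\{n \geq k\} \in \u$, so $\lim_\u \|\pi(\gamma)\zeta_n - \zeta_n\| = 0$ and $\pi^{\u}(\gamma)\zeta = \zeta$. Thus $\zeta$ is a nonzero $\pi^{\u}$-fixed vector, contradicting (2). The countability of $\G$ is precisely what lets a single sequence $(\zeta_n)$ handle all group elements simultaneously; without it one would need a net and a more delicate ultrafilter choice.
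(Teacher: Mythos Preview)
The paper explicitly leaves the proof of this proposition to the reader, so there is no argument in the paper to compare against. Your proof is correct in all four implications: the homogeneity argument for (1)$\Leftrightarrow$(3) and the coordinatewise estimate for (1)$\Rightarrow$(2) are straightforward, and your diagonal-sequence construction for $\neg(1)\Rightarrow\neg(2)$ (choosing unit vectors $\zeta_n$ almost invariant under the increasing finite sets $F_n$ and passing to a nonprincipal ultrafilter on $\n$) is exactly the intended exercise, with the countability of $\Gamma$ used precisely where you indicate.
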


Notice that an action satisfying the equivalent properties enumerated in Proposition \ref{spectgap} is necessarily ergodic.  Here is arguably the most important definition in this entire paper.

\begin{df}
A unitary representation $\pi$ has \textbf{spectral gap} if $\pi|\Erg(\pi)$ satisfies the equivalent properties enumerated in Proposition \ref{spectgap}.
\end{df}

\begin{ex}\label{finitegap}
If $\Gamma$ is finite, then every representation of $\Gamma$ has spectral gap.  To see this, fix an ergodic representation $\pi$ of $\Gamma$ and $\xi\in \H_\pi$.  Note then that $\frac{1}{|\Gamma|}\sum_{\gamma\in \Gamma}\pi(\gamma)\xi$ belongs to $\Fix(\pi)$ and is thus equal to $0$.  It follows that
$$\|\xi\|=\left\|\frac{1}{|\Gamma|}\sum_{\gamma\in \Gamma}\pi(\gamma)\xi-\xi\right\|=\left\|\frac{1}{|\Gamma|}\sum_{\gamma\in \Gamma}(\pi(\gamma)\xi-\xi)\right\|\leq \frac{1}{|\Gamma|}\sum_{\gamma\in \Gamma}\|\pi(\gamma)\xi-\xi\|.$$  Thus, if $\max_{\gamma\in \Gamma}\|\pi(\gamma)\xi-\xi\|\leq \epsilon$ for all $\gamma\in \Gamma$, we have that $\|\xi\|\leq \epsilon$.
\end{ex}

\begin{ex}\label{amenablespec}
By a theorem of Hulanicki and Reiter (see \cite[Theorem G.3.2]{Tbook}), if $\Gamma$ is infinite, then $\lambda_\Gamma$ has spectral gap if and only if $\Gamma$ is non-amenable.
\end{ex}

Before we move on any further, let us briefly explain the terminology.  Suppose that $\pi$ is an ergodic unitary representation of $\Gamma$.  Suppose that $F\subseteq \Gamma$ is finite and closed under inverse.  Define $h_F:=\frac{1}{|F|}\sum_{\gamma\in F}\pi(\gamma)$, a self-adjoint contraction, that is, $h_F^*=h_F$ and $\|h_F\|\leq 1$.  Consequently, $\sigma(h_F)\subseteq [-1,1]$, where $\sigma(h_F)$ denotes the \textbf{spectrum} of $h_F$, namely $$\sigma(h_F):=\{\lambda \in \mathbb{C} \ : \ h_F-\lambda\cdot I \text{ is not invertible}\}.$$

The following fact explains the terminology spectral gap; see, for example, \cite[Corollary 15.1.4]{AP}.

\begin{fact}
$\pi$ has spectral gap if and only if there is a symmetric finite $F$ and $\delta<1$ such that $\sigma(h_F)\subseteq [-1,1-\delta]$.
\end{fact}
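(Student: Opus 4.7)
The plan is to translate between the vector condition in Proposition \ref{spectgap}(1) and the spectral condition on $h_F$ via the following elementary identity, valid whenever $F$ is symmetric:
$$\langle (I - h_F)\zeta, \zeta\rangle = \frac{1}{2|F|}\sum_{\gamma \in F}\|\pi(\gamma)\zeta - \zeta\|^2.$$
This follows by averaging $\|\pi(\gamma)\zeta - \zeta\|^2 = 2\|\zeta\|^2 - 2\mathrm{Re}\langle \pi(\gamma)\zeta, \zeta\rangle$ over $\gamma \in F$, the symmetry of $F$ ensuring that the imaginary parts of $\langle \pi(\gamma)\zeta, \zeta\rangle$ cancel in pairs. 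Combined with the standard fact that, for a self-adjoint contraction $h$, one has $\sigma(h) \subseteq [-1, 1-\delta]$ if and only if $\langle (I-h)\zeta, \zeta\rangle \geq \delta\|\zeta\|^2$ for all $\zeta$, this identity reduces both implications to routine estimates.

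For the forward direction, assume $\pi$ (which is ergodic, so $\H_\pi = \Erg(\pi)$) has spectral gap; by Proposition \ref{spectgap}(1) there is a finite $F_0 \subseteq \Gamma$ and $c > 0$ such that $\max_{\gamma \in F_0}\|\pi(\gamma)\zeta - \zeta\| \geq c\|\zeta\|$ for all $\zeta \in \H_\pi$. Passing to the symmetric set $F := F_0 \cup F_0^{-1}$ preserves this bound because unitarity gives $\|\pi(\gamma^{-1})\zeta - \zeta\| = \|\pi(\gamma)\zeta - \zeta\|$. Consequently at least one summand in the sum is $\geq c^2\|\zeta\|^2$, so the displayed identity yields $\langle (I - h_F)\zeta, \zeta\rangle \geq \frac{c^2}{2|F|}\|\zeta\|^2$, whence $\sigma(h_F) \subseteq [-1, 1-\delta]$ with $\delta := c^2/(2|F|) > 0$.

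Conversely, suppose $F$ is symmetric, finite, and $\sigma(h_F) \subseteq [-1, 1-\delta]$ with $\delta > 0$. Since $I - h_F \geq \delta I$ as self-adjoint operators, $(I-h_F)^2 \geq \delta^2 I$ and hence $\|h_F \zeta - \zeta\| \geq \delta \|\zeta\|$ for every $\zeta$. The triangle inequality then gives
$$\max_{\gamma \in F}\|\pi(\gamma)\zeta - \zeta\| \geq \|h_F \zeta - \zeta\| \geq \delta\|\zeta\|,$$
which is condition (1) of Proposition \ref{spectgap} for $\pi$ itself (note this forces $\Fix(\pi)=\{0\}$), so $\pi$ has spectral gap. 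No step here is genuinely hard; the main conceptual point is that symmetrizing $F$ is free for unitary representations and that the key identity is what lets one exchange spectral bounds with vector bounds.
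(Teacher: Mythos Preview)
Your argument is correct. The identity $\langle (I-h_F)\zeta,\zeta\rangle=\frac{1}{2|F|}\sum_{\gamma\in F}\|\pi(\gamma)\zeta-\zeta\|^2$ is verified exactly as you say (symmetry of $F$ makes $h_F$ self-adjoint, so $\langle h_F\zeta,\zeta\rangle$ is real and no imaginary parts survive), and both directions follow cleanly from it together with the functional-calculus fact that $\sigma(h_F)\subseteq[-1,1-\delta]$ is equivalent to $I-h_F\geq \delta I$. The symmetrization step and the estimate $\|h_F\zeta-\zeta\|\leq \max_{\gamma\in F}\|\pi(\gamma)\zeta-\zeta\|$ are both fine.

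As for comparison with the paper: there is nothing to compare. The paper does not prove this fact; it records it with a pointer to \cite[Corollary 15.1.4]{AP}. So you have supplied a self-contained proof where the paper only gives a citation, and your approach (the averaging identity linking $I-h_F$ to the displacement norms) is the standard one that the cited reference uses as well.
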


In the next section, the following theorem will immediately yield the connection between spectral gap and definability.

\begin{thm}\label{ultrapowerspecgap}
A unitary representation $\pi$ has spectral gap if and only if, for any nonprincipal ultrafilter $\u$, we have $\Fix(\pi^\u)=\Fix(\pi)^\u$.
\end{thm}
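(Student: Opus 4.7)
The plan is to reduce the statement to the equivalence (1)$\Leftrightarrow$(2) in Proposition \ref{spectgap}, applied to the ergodic part $\pi|\Erg(\pi)$, by showing that the orthogonal decomposition $\H_\pi=\Fix(\pi)\oplus\Erg(\pi)$ lifts cleanly to the ultrapower.

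First I would establish the decomposition $\H_\pi^\u=\Fix(\pi)^\u\oplus\Erg(\pi)^\u$ (orthogonal direct sum inside $\H_{\pi^\u}$). Given a representative $(\xi_i)$ of an element of $\H_\pi^\u$, write $\xi_i=\xi_i'+\xi_i''$ with $\xi_i'\in\Fix(\pi)$ and $\xi_i''\in\Erg(\pi)$; since the orthogonal projection onto $\Fix(\pi)$ is a contraction, both $(\xi_i')$ and $(\xi_i'')$ are bounded and represent elements of $\Fix(\pi)^\u$ and $\Erg(\pi)^\u$ respectively. Orthogonality passes through the ultralimit of inner products, so the sum is direct and orthogonal.

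Next I would observe that $\pi^\u(\gamma)$ preserves both summands coordinatewise, and that the first summand is always fixed. Hence for $(\xi_i)^\bullet=(\xi_i')^\bullet+(\xi_i'')^\bullet$, we have $(\xi_i)^\bullet\in\Fix(\pi^\u)$ iff $(\xi_i'')^\bullet\in\Fix((\pi|\Erg(\pi))^\u)$. Consequently,
\[
\Fix(\pi^\u)=\Fix(\pi)^\u\oplus\Fix\bigl((\pi|\Erg(\pi))^\u\bigr),
\]
so the equality $\Fix(\pi^\u)=\Fix(\pi)^\u$ holds for a given nonprincipal $\u$ if and only if $(\pi|\Erg(\pi))^\u$ is ergodic.

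Finally I would conclude: $\Fix(\pi^\u)=\Fix(\pi)^\u$ for every nonprincipal ultrafilter $\u$ iff $(\pi|\Erg(\pi))^\u$ is ergodic for every nonprincipal $\u$, which by the equivalence (1)$\Leftrightarrow$(2) of Proposition \ref{spectgap} applied to $\pi|\Erg(\pi)$ is exactly the definition of $\pi$ having spectral gap. I do not anticipate a real obstacle here; the only mildly delicate point is verifying that $\Erg(\pi)^\u$ really coincides with the orthogonal complement of $\Fix(\pi)^\u$ inside $\H_\pi^\u$, which is where care with representatives of ultraproduct vectors is needed.
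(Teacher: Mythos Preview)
Your proposal is correct and follows exactly the same route as the paper: the paper's proof simply asserts the decomposition $\Fix(\pi^\u)=\Fix(\pi)^\u\oplus \Fix((\pi|\Erg(\pi))^\u)$ and then invokes part (2) of Proposition \ref{spectgap}. You have supplied the details behind that decomposition (via the orthogonal splitting $\H_\pi^\u=\Fix(\pi)^\u\oplus\Erg(\pi)^\u$), which the paper leaves implicit.
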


\begin{proof}
Note that $\Fix(\pi^\u)=\Fix(\pi)^\u\oplus \Fix((\pi|\Erg(\pi))^\u)$.  By part (2) of Proposition \ref{spectgap}, $\pi$ has spectral gap if and only if $\Fix((\pi|\Erg(\pi))^\u)=\{0\}$, whence the result follows.
\end{proof}

\subsection{Spectral gap and definability}  For the rest of this section, we fix an enumeration of $\Gamma$, say $\Gamma=\{\gamma_0,\gamma_1,\gamma_2,\ldots\}$.  We view a unitary representation $\pi$ as a structure in the language of Hilbert spaces\footnote{Say, for simplicity, the one-sorted language for the unit ball of Hilbert spaces.} augmented by function symbols for elements of $\Gamma$ in the natural way.  It follows easily that the class of unitary representations of $\Gamma$ is in fact an axiomatizable class in this language, say $\Mod(T_{\Gamma})$.  Note that $\pi\mapsto \Fix(\pi)$ is a $T_\Gamma$-functor and, in fact, $\Fix(\pi)$ is the zeroset of the $T_\Gamma$-formula $$\varphi_{\Gamma}(x):=\sum_m 2^{-m}\|\gamma_m\cdot x-x\|.$$  

By Theorems \ref{ultrapowerlocaldefinability} and \ref{ultrapowerspecgap}, we immediately have:

\begin{cor}\label{specgapdef}
Fix a unitary representation $\pi$ of $\Gamma$.  Then $\pi$ has spectral gap if and only if $\Fix(\pi)$ is a $\varphi_\Gamma$-definable subset of $\H_{\pi}$.
\end{cor}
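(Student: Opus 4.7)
The plan is to combine the two cited theorems essentially verbatim, after identifying $\Fix(\pi)$ with the relevant zeroset.

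First I would note that $\Fix(\pi)=Z(\varphi_\Gamma^{\pi})$. This is because $\varphi_\Gamma^\pi(\xi)=\sum_m 2^{-m}\|\pi(\gamma_m)\xi-\xi\|$ vanishes precisely when $\pi(\gamma_m)\xi=\xi$ for every $m$, which by the enumeration of $\Gamma$ is equivalent to $\xi\in\Fix(\pi)$. The same computation, carried out in the ultrapower structure, gives $\Fix(\pi^{\u})=Z(\varphi_\Gamma^{\pi^{\u}})$ for every ultrafilter $\u$.

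Next I would apply Theorem \ref{ultrapowerspecgap}: $\pi$ has spectral gap if and only if $\Fix(\pi^{\u})=\Fix(\pi)^{\u}$ for every nonprincipal ultrafilter $\u$. Using the identifications of the previous paragraph, this rephrases as $Z(\varphi_\Gamma^{\pi^{\u}})=Z(\varphi_\Gamma^{\pi})^{\u}$ for every nonprincipal $\u$. The principal case is automatic since $\pi^{\u}$ is then canonically isomorphic to $\pi$, so the equality of zerosets holds trivially; thus the condition may be quantified over all ultrafilters.

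Finally I would invoke Theorem \ref{ultrapowerlocaldefinability} applied to the formula $\varphi_\Gamma$ in the structure $\pi$: $Z(\varphi_\Gamma^{\pi})$ is $\varphi_\Gamma$-definable if and only if $Z(\varphi_\Gamma^{\pi})^{\u}=Z(\varphi_\Gamma^{\pi^{\u}})$ for every ultrafilter $\u$, which is exactly the condition we just obtained. Hence spectral gap for $\pi$ coincides with $\varphi_\Gamma$-definability of $\Fix(\pi)$.

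There is essentially no obstacle: the argument is just a substitution through two equivalences already available, and the only minor point worth flagging is the compatibility of the quantifier ``every ultrafilter'' in Theorem \ref{ultrapowerlocaldefinability} with the ``every nonprincipal ultrafilter'' in Theorem \ref{ultrapowerspecgap}, which is handled by the triviality of the principal case.
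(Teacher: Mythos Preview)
Your proposal is correct and follows exactly the approach the paper intends: the paper simply states that the corollary is immediate from Theorems \ref{ultrapowerlocaldefinability} and \ref{ultrapowerspecgap}, and you have faithfully unpacked that immediacy, including the identification $\Fix(\pi)=Z(\varphi_\Gamma^\pi)$ and the harmless passage between ``every ultrafilter'' and ``every nonprincipal ultrafilter.''
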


%

\begin{nrmk}\label{unfortunate1}
Unfortunately, in general, one cannot replace ``$\varphi_\Gamma$-definable'' in the previous theorem with ``definable.''  For example, suppose that $\Gamma$ is an infinite amenable group.  Then by Examples \ref{finitefix} and \ref{amenablespec}, $\Fix(\lambda_\Gamma)=\{0\}$ (which is clearly a definable subset of $\ell^2\Gamma$) but $\lambda_\Gamma$ does not have spectral gap.  However, as shown in Proposition \ref{saturateddefinable}, if $\H_\pi$ is $\aleph_1$-saturated, then it is in fact true that $\pi$ has spectral gap if and only if $\Fix(\pi)$ is definable.    
\end{nrmk}

We now address the global question.  First, we need a definition:

\begin{df}
We say that $\Gamma$ has \textbf{property (T)} if every unitary representation of $\Gamma$ has spectral gap.
\end{df}

\begin{ex}

\

\begin{enumerate}
\item Finite groups have property (T).  This follows from the calculation done in Example \ref{finitegap}.
\item If $n\geq 3$, then $\operatorname{SL}_n(\mathbb{Z})$ has property (T).  This is a theorem due to Kazhdan; see \cite[Section 1.4]{Tbook}.
\item It has recently been shown in \cite{KNO} (using a computer-assisted proof) that $\Aut(\mathbb{F}_5)$ has property (T).
\item \emph{Random groups} (in the sense of Gromov) have property (T); see the introduction to \cite{Tbook} for references.
\item By Examples \ref{finitefix} and \ref{amenablespec}, infinite amenable groups never have property (T).  A finitely generated free group also does not have property (T); see \cite[Example 1.3.7]{Tbook}.
\end{enumerate}
\end{ex}

While the above definition of property (T) is probably not the standard one, we find it be the most natural given the context of this article.  We now describe the usual definition.  First, given a unitary representation $\pi$ of $\Gamma$, a finite subset $F$ of $\Gamma$, and $\delta>0$, we say that $\xi\in \H_\pi$ is \textbf{$(F,\delta)$-almost invariant} if $\max_{\gamma \in F}\|\pi(\gamma)(\xi)-\xi\|<\delta \|\xi\|$.  We say that $\pi$ has \textbf{almost invariant vectors} if, for every finite subset $F$ of $\Gamma$ and every $\delta>0$, $\pi$ has a $(F,\delta)$-almost invariant vector.  Note that $\pi$ has almost invariant vectors if and only if there is a nonprincipal ultrafilter $\u$ such that $\pi^\u$ is not ergodic.  The following lemma is now immediate:

\begin{lemma}
$\Gamma$ has property (T) if and only if:  for every unitary representation $\pi$ of $\Gamma$, if $\pi$ has almost invariant vectors, then $\pi$ is not ergodic.
\end{lemma}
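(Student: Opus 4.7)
The plan is to unwind definitions, using the ultrapower characterization of both spectral gap and almost invariance. The key facts I will use are: (i) $\pi$ has spectral gap iff $\pi|\Erg(\pi)$ satisfies condition (2) of Proposition \ref{spectgap}, namely $(\pi|\Erg(\pi))^\u$ is ergodic for every nonprincipal $\u$; (ii) the remark immediately preceding the lemma, that $\pi$ has almost invariant vectors iff there is some nonprincipal $\u$ with $\pi^\u$ not ergodic; and (iii) the tautology that $\pi|\Erg(\pi)$ is itself ergodic.

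For the forward direction, I assume $\Gamma$ has property (T) and let $\pi$ be a representation with almost invariant vectors. By (ii), there is some nonprincipal $\u$ with $\pi^\u$ not ergodic. If $\pi$ were ergodic, then $\Erg(\pi)=\H_\pi$, so $\pi=\pi|\Erg(\pi)$, and property (T) applied to $\pi$ would force $\pi^\u = (\pi|\Erg(\pi))^\u$ to be ergodic via (i), contradicting our choice of $\u$. Hence $\pi$ is not ergodic.

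For the backward direction, I assume every representation with almost invariant vectors is non-ergodic, and let $\pi$ be an arbitrary unitary representation. To show $\pi$ has spectral gap, I must verify condition (2) of Proposition \ref{spectgap} for $\sigma:=\pi|\Erg(\pi)$. Suppose it failed: then some nonprincipal ultrapower $\sigma^\u$ is not ergodic, so by (ii) $\sigma$ has almost invariant vectors. The hypothesis would then give that $\sigma$ is not ergodic, contradicting (iii).

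There is no real obstacle here; the whole content lies in the observation that applying the hypothesis to the ergodic part $\pi|\Erg(\pi)$ (rather than to $\pi$ itself) yields a contradiction, because the ergodic part is automatically ergodic by construction. I would make sure to state explicitly up front that both sides of the equivalence should be rephrased as statements about $\pi^\u$, so that the two ultrapower characterizations can be plugged in cleanly.
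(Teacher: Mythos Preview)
Your proof is correct and is precisely the unpacking of definitions that the paper has in mind: the paper simply declares the lemma ``immediate'' from the preceding ultrapower characterization of almost invariant vectors and the definition of spectral gap, and your argument is exactly that immediate verification written out in full.
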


It turns out that one can improve the definition of property (T) using a notion that is a priori weaker:

\begin{lemma}\label{kazhdan}
$\Gamma$ has property (T) if and only if there is a finite $F\subseteq \Gamma$ and $\delta>0$ such that:  for every unitary representation $\pi$, if $\pi$ has a $(F,\delta)$-almost invariant vector, then $\pi$ is not ergodic.
\end{lemma}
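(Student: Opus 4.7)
The plan is to prove the two directions separately. The backward direction is essentially immediate from the previous lemma: if the purported $(F,\delta)$ exists, then any representation of $\Gamma$ admitting almost invariant vectors admits in particular a $(F,\delta)$-almost invariant vector, and hence is not ergodic by hypothesis, so the previous lemma yields property (T).

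The substantive direction is the forward one, and I plan to argue it by contradiction. Assuming $\Gamma$ has property (T) but that no uniform $(F,\delta)$ works, the goal is to produce a single ergodic representation of $\Gamma$ that nonetheless admits almost invariant vectors, contradicting the previous lemma. The construction is the standard direct-sum trick. Using countability of $\Gamma$, enumerate a cofinal sequence $(F_n,\delta_n)$ with $F_n\nearrow \Gamma$ and $\delta_n\searrow 0$. By the negation of the conclusion, each such pair witnesses the existence of an ergodic unitary representation $\pi_n$ of $\Gamma$ together with an $(F_n,\delta_n)$-almost invariant unit vector $\xi_n\in \H_{\pi_n}$. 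The representation $\sigma:=\bigoplus_n \pi_n$ is then ergodic, because $\Fix(\sigma)=\bigoplus_n \Fix(\pi_n)=\{0\}$, and admits almost invariant vectors, because for any finite $F\subseteq \Gamma$ and any $\delta>0$, choosing $n$ large enough that $F\subseteq F_n$ and $\delta_n\leq \delta$ makes $\xi_n$ (viewed as a unit vector supported in the $n$-th summand of $\H_\sigma$) into an $(F,\delta)$-almost invariant vector for $\sigma$.

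The anticipated obstacle is quite mild: one needs to verify that $\Fix$ commutes with orthogonal direct sums and that the almost-invariance quantities computed in a single summand agree with those computed in the full direct sum for a vector supported in that summand. Both are immediate from the orthogonality of the summands. No ingredient beyond the previous lemma and this direct-sum bookkeeping is required; property (T) itself enters only through the characterization provided by the previous lemma. An equally clean alternative would replace the direct sum with an ultraproduct $\prod_\u \pi_n$ along a nonprincipal ultrafilter $\u$ on $\n$: the vector $(\xi_n)^\bullet$ is then a nonzero invariant vector, while each $\pi_n$ being ergodic similarly forces $\prod_\u \pi_n$ to have no nontrivial invariant vector from $\prod_\u \Fix(\pi_n)$, but I find the direct-sum route more transparent since it avoids invoking ultraproducts of Hilbert spaces at this stage.
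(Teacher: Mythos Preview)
Your argument is correct and follows essentially the same route as the paper: both prove the nontrivial direction by contradiction via the direct sum of ergodic counterexample representations, the only cosmetic difference being that you index by a countable cofinal sequence $(F_n,\delta_n)$ while the paper indexes over all pairs $(F,\delta)$. Your brief ultraproduct aside is a bit muddled (the ultraproduct would fail to be ergodic rather than have almost invariant vectors, so it does not slot directly into the previous lemma), but since you correctly set it aside in favor of the direct-sum argument this does not affect the proof.
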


\begin{proof}[Proof Sketch]
Suppose that no such pair $(F,\delta)$ exists.  Then, for each such pair, there is an ergodic  representation $\pi_{(F,\delta)}$ of $\Gamma$ with an $(F,\delta)$-almost invariant vector.  It is easy to verify that $\bigoplus_{(F,\delta)}\pi_{(F,\delta)}$ is an ergodic representation of $\Gamma$ with almost invariant vectors, whence $\Gamma$ does not have property (T).
\end{proof}

A pair $(F,\delta)$ as in Lemma \ref{kazhdan} is called a \textbf{Kazhdan pair} for $\Gamma$ and $F$ is called a \textbf{Kazhdan set} for $\Gamma$.  The following proposition gets us closer to the connection with definability:

\begin{prop}\label{defT}
Suppose that $(F,\delta)$ is a Kazhdan pair for $\Gamma$.  Then for any unitary representation $\pi$ of $\Gamma$ and any $\epsilon>0$, if $\xi\in \H_{\pi}$ is $(F,\delta\epsilon)$-invariant, then there is $\eta\in \Fix(\pi)$ such that $\|\xi-\eta\|<\epsilon\|\xi\|$.
\end{prop}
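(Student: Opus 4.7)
The plan is to exploit the orthogonal decomposition $\H_\pi = \Fix(\pi) \oplus \Erg(\pi)$ and push the almost-invariance of $\xi$ onto its ergodic component, then use the contrapositive of the Kazhdan pair property on $\pi|\Erg(\pi)$.

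More precisely, given $\xi \in \H_\pi$ that is $(F, \delta\epsilon)$-invariant, I would write $\xi = \xi_0 + \xi_1$ with $\xi_0 \in \Fix(\pi)$ and $\xi_1 \in \Erg(\pi)$ (which is possible since $\Erg(\pi) = \Fix(\pi)^\perp$ by definition). The candidate for $\eta$ is simply $\xi_0$, so the goal reduces to showing $\|\xi_1\| < \epsilon \|\xi\|$.

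The key observation is that since $\pi(\gamma)\xi_0 = \xi_0$ for every $\gamma \in \Gamma$, we have the exact identity $\pi(\gamma)\xi - \xi = \pi(\gamma)\xi_1 - \xi_1$, so
\[
\max_{\gamma \in F}\|\pi(\gamma)\xi_1 - \xi_1\| = \max_{\gamma \in F}\|\pi(\gamma)\xi - \xi\| < \delta\epsilon\|\xi\|.
\]
Now $\pi|\Erg(\pi)$ is ergodic by construction, so the contrapositive of the Kazhdan pair hypothesis for $(F,\delta)$ applies to the restriction: no nonzero vector in $\Erg(\pi)$ can be $(F,\delta)$-almost invariant for $\pi|\Erg(\pi)$. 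Applied to $\xi_1$ (assuming $\xi_1 \neq 0$; otherwise we are already done), this yields $\max_{\gamma \in F}\|\pi(\gamma)\xi_1 - \xi_1\| \geq \delta \|\xi_1\|$. Chaining the two inequalities gives $\delta\|\xi_1\| < \delta\epsilon\|\xi\|$, hence $\|\xi - \xi_0\| = \|\xi_1\| < \epsilon\|\xi\|$ as required.

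There is really no significant obstacle here; the argument is a direct application of the definitions once one sees that orthogonal projection onto $\Fix(\pi)$ is the right move. The only subtlety worth flagging is the need to apply the Kazhdan pair definition to the restricted representation $\pi|\Erg(\pi)$ rather than to $\pi$ itself, since it is the restriction that is ergodic and hence subject to the contrapositive of the Kazhdan condition.
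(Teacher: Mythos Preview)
Your proof is correct and is essentially identical to the paper's: both decompose $\xi$ orthogonally into its $\Fix(\pi)$ and $\Erg(\pi)$ components, observe that $\pi(\gamma)\xi-\xi$ coincides with the displacement of the ergodic component, and then apply the Kazhdan pair condition to the ergodic restriction to bound that component by $\epsilon\|\xi\|$. The only cosmetic differences are your explicit handling of the trivial case $\xi_1=0$ and the phrasing in terms of the contrapositive.
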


\begin{proof}
Write $\xi=\xi_1+\xi_2$ with $\xi_1\in \Fix(\pi)$ and $\xi_2\in \Erg(\pi)$; it suffices to show that $\|\xi_2\|<\epsilon\|\xi\|$.  By the definition of Kazhdan pair, there is $\gamma\in F$ such that $\|\pi(\gamma)(\xi_2)-\xi_2\|\geq \delta\|\xi_2\|$.  On the other hand, we have 
$$\|\pi(\gamma)(\xi_2)-\xi_2\|=\|\pi(\gamma)(\xi)-\xi\|<\delta\epsilon \|\xi\|.$$  The desired result now follows.
\end{proof}




\begin{thm}
The following are equivalent:
\begin{enumerate}
\item $\Gamma$ has property (T).
\item The $T_\Gamma$-functor $\Fix$ is a $T_\Gamma$-definable set.
\end{enumerate}
In this case, a simple $T_{\Gamma}$-formula witnesses that $\Fix$ is a definable set.
\end{thm}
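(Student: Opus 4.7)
The proof splits into the two implications, both short consequences of the machinery already developed in the section.

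For $(2) \Rightarrow (1)$, suppose $\Fix$ is a $T_\Gamma$-definable set. Item (4) of Theorem \ref{maindefinabilitytheorem} then gives $\Fix(\pi^\u) = \Fix(\pi)^\u$ for every unitary representation $\pi$ of $\Gamma$ and every ultrafilter $\u$, which by Theorem \ref{ultrapowerspecgap} is exactly the statement that $\pi$ has spectral gap. Since $\pi$ was arbitrary, $\Gamma$ has property (T).

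For $(1) \Rightarrow (2)$, Lemma \ref{kazhdan} supplies a Kazhdan pair $(F,\delta)$ for $\Gamma$. Since $F$ is finite, the expression
$$\varphi_F(x) := \max_{\gamma \in F} \|\gamma \cdot x - x\|$$
is a genuine simple $T_\Gamma$-formula (a finite continuous combination of atomic formulas). My plan is to verify that $\varphi_F$ is almost-near with $Z(\varphi_F) = \Fix$; item (3) of Theorem \ref{maindefinabilitytheorem} will then deliver that $\Fix$ is a $T_\Gamma$-definable set, and the final clause of the theorem will be automatic, the witness being $\varphi_F$ itself.

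Both the zeroset identity and the almost-near condition are obtained from a single rescaling of Proposition \ref{defT}. Given $\epsilon > 0$, set $\delta' := \delta\epsilon$ and fix any $\xi \in \H_\pi$ with $\varphi_F^\pi(\xi) < \delta'$. The case $\xi = 0$ is trivial. For $\xi \neq 0$, set $\epsilon' := \epsilon/\|\xi\|$; then the hypothesis rewrites as $\varphi_F^\pi(\xi) < \delta\epsilon'\|\xi\|$, so Proposition \ref{defT} produces $\eta \in \Fix(\pi)$ with $\|\xi - \eta\| < \epsilon'\|\xi\| = \epsilon$. This establishes the almost-near property. Specializing to $\varphi_F^\pi(\xi) = 0$ (so the hypothesis holds for every $\delta' > 0$) and using that $\Fix(\pi)$ is closed gives $\xi \in \Fix(\pi)$, hence $Z(\varphi_F) \subseteq \Fix$; the reverse inclusion is immediate since $F \subseteq \Gamma$.

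The only mildly subtle point is this rescaling trick, which converts the multiplicative bound of Proposition \ref{defT} into the absolute bound required by almost-nearness. Because the rescaling depends on $\|\xi\| > 0$, the zero vector must be treated separately, but that case is trivial, so no genuine obstacle arises; everything else is routine.
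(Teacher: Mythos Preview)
Your proof is correct and follows essentially the same approach as the paper's: both directions rest on Proposition~\ref{defT} and the ultrapower characterization of spectral gap, and the ``moreover'' clause is obtained in both cases from a Kazhdan set. Your treatment of $(1)\Rightarrow(2)$ is simply more explicit than the paper's one-line citation of Proposition~\ref{defT}, spelling out the rescaling that converts the multiplicative bound $\|\xi-\eta\|<\epsilon\|\xi\|$ into the absolute bound required by almost-nearness; the paper leaves this to the reader. One tiny point you might add for completeness: since the language is that of the unit ball, you need $\eta$ to lie in the unit ball as well, which it does because the $\eta$ produced in the proof of Proposition~\ref{defT} is the orthogonal projection of $\xi$ onto $\Fix(\pi)$.
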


\begin{proof}
The direction that (1) implies (2) follows immediately from Proposition \ref{defT}.  The other direction follows immediately from the definition of property (T) and Corollary \ref{specgapdef}.  The moreover part follows from the existence of Kazhdan sets.
\end{proof}

\begin{nrmk}
Note that, at first glance, property (T) merely states that $\Fix(\pi)$ is a $\varphi_{\Gamma}$-definable subset of $\mathcal{H}_{\pi}$ for each $\mathcal{H}_{\pi}\models T_{\Gamma}$.  However, after some simple Hilbert space manipulations, one concludes the stronger statement that $\Fix(\pi)$ is a $T_{\Gamma}$-definable set. 
\end{nrmk}

The phenomenon described in the previous remark is atypical; here is an example to show that it need not hold in general:

\begin{ex}
Let $L$ consist of a single unary relation symbol $R$ taking values in $[0,1]$ and let $T$ be the $L$-theory that states that $R$ is constant in every model, that is, $$T=\left\{\sup_{x,y}|R(x)-R(y)|=0\right\}.$$  Then in any $\mathbf{M}\models T$, $Z(R^{\mathbf{M}})$ is either empty or all of $\mathbf{M}$; in either case, it is $R$-definable.  (If $R^{\mathbf{M}}$ is constantly $\delta$, then $\frac{\delta}{2}$ works for any $\epsilon$ vacuously.)  But if $\mathbf{M}_n\models T$ is such that $R^{\mathbf{M}_n}$ is constantly $\frac{1}{n}$ and $\mathbf{M}=\prod_\u \mathbf{M}_n$ for some nonprincipal ultrafilter $\u$ on $\n$, then $R^{\mathbf{M}}$ is identically $0$, whence $Z(R^{\mathbf{M}})=\mathbf{M}$ while $\prod_\u Z(R^{\mathbf{M}_n})=\emptyset$, whence $Z(R)$ is not a $T$-definable set.
\end{ex}


\subsection{Spectral gap and ergodic theory}

If $\Gamma$ has property (T), then there are strong implications for the ergodic theory of actions of $\Gamma$.  In this subsection, we point out the model-theoretic versions of these implications.

We first recall that an action $\sigma$ of $\Gamma$ on a probability space $(X,\mathcal{B},\mu)$ is said to be \textbf{probability measure preserving} (or pmp) if each $\gamma\in \Gamma$ acts as an automorphism of the probability space.  In this case, we simply write $\Gamma\curvearrowright^{\sigma}(X,\mathcal{B},\mu)$ or even $\Gamma\curvearrowright^{\sigma} (X,\mu)$ if $\mathcal{B}$ is clear from context.

\begin{df}
Given an action $\Gamma\curvearrowright^{\sigma}(X,\mu)$, we set $$\Fix(\sigma):=\{A\in \mathcal{B} \ : \ gA=A \text{ for all }g\in G\}.$$  We say that $\sigma$ is \textbf{ergodic} if every element of $\Fix(\sigma)$ is $\mu$-null or $\mu$-conull.
\end{df}

The connection between pmp actions and unitary representations is via the following definition:

\begin{df}
Given an action $\Gamma\curvearrowright^{\sigma} (X,\mu)$, the \textbf{Koopman representation} of $\sigma$ is the unitary group representation $\pi_\sigma:\Gamma\to L^2(X,\mu)$ given by $$(\pi_\sigma(\gamma)(f))(x):=f(\sigma(\gamma^{-1})(x)).$$
\end{df}

Note that $\pi_\sigma$ is never ergodic since it always contains the constant functions.  We let $\pi_{\sigma,0}$ denote the restriction of $\pi_\sigma$ to the orthogonal complement of the constant functions.  The following lemma is standard and straightforward.

\begin{lemma}
$\sigma$ is ergodic if and only if $\pi_{\sigma,_0}$ is ergodic.
\end{lemma}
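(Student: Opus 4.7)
The plan is to prove both directions by passing between invariant measurable sets and invariant $L^2$-vectors, using indicator functions in one direction and level sets in the other.

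For the ``if'' direction, I would start by assuming that $\pi_{\sigma,0}$ is ergodic and pick $A \in \Fix(\sigma)$. Since $\mu$ is a probability measure, $\chi_A \in L^2(X,\mu)$, and since $\sigma(\gamma)A = A$ for every $\gamma$, we have $\pi_\sigma(\gamma)(\chi_A) = \chi_A \circ \sigma(\gamma^{-1}) = \chi_{\sigma(\gamma)A} = \chi_A$, so $\chi_A \in \Fix(\pi_\sigma)$. Decomposing $\chi_A = \mu(A)\cdot 1 + (\chi_A - \mu(A)\cdot 1)$ into its projections onto the constants and their orthogonal complement, both summands are individually $\pi_\sigma$-invariant (the first trivially, the second by subtraction). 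The second summand lies in the domain of $\pi_{\sigma,0}$, so by hypothesis it is zero, forcing $\chi_A = \mu(A)$ a.e. Since $\chi_A$ only takes values in $\{0,1\}$, this gives $\mu(A) \in \{0,1\}$.

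For the ``only if'' direction, I assume $\sigma$ is ergodic and take $f \in L^2(X,\mu)$ orthogonal to the constants and fixed by every $\pi_\sigma(\gamma)$. I want to conclude $f = 0$. By splitting $f$ into real and imaginary parts (each of which is separately $\pi_\sigma$-invariant because precomposition with $\sigma(\gamma^{-1})$ commutes with complex conjugation), I may reduce to real-valued $f$. Then for each $t \in \r$, the level set $A_t := \{x : f(x) > t\}$ satisfies $\sigma(\gamma) A_t = A_t$ modulo null sets, because $f \circ \sigma(\gamma^{-1}) = f$ a.e. and $\sigma(\gamma)$ is measure-preserving. After modifying $A_t$ on a null set (which is possible thanks to the pmp assumption), $A_t \in \Fix(\sigma)$, so by ergodicity each $A_t$ is null or conull. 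This forces $f$ to be essentially constant, and orthogonality to the constants then gives $f = 0$.

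The main technical subtlety is the interplay between a.e.-equality at the function level and equality-on-the-nose at the set level: invariance $\pi_\sigma(\gamma)f = f$ holds only almost everywhere, so one has to check that the level sets can be arranged to be genuinely $\sigma$-invariant (rather than merely invariant mod null), or else work with the $\sigma$-algebra of invariant sets modulo null sets throughout. Either formulation is standard, and the pmp hypothesis is exactly what guarantees that measure-zero sets are preserved, so the argument goes through cleanly.
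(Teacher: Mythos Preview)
Your proof is correct and is exactly the standard argument; the paper does not give a proof at all, simply declaring the lemma ``standard and straightforward.'' Your write-up, including the care about a.e.\ versus pointwise invariance, is precisely what the paper is implicitly invoking.
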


\begin{df}
Given an action $\Gamma\curvearrowright^{\sigma} (X,\mu)$, we say that $\sigma$ has \textbf{spectral gap} if the Koopman representation $\pi_\sigma:\Gamma\to L^2(X,\mu)$ has spectral gap.
\end{df}

As described in \cite[Section 16]{BBHU}, probability spaces are studied model-theoretically via the corresponding \textbf{probability algebras}, which are simply the metric structures obtained from identifying measurable sets whose symmetric difference has measure $0$.  Probability algebras form an elementary class in a natural language.  A pmp action of $\Gamma$ on a probability space induces an action of $\Gamma$ on the corresponding probability algebra and it is straightforward to verify that the class of actions of $\Gamma$ on probability algebras forms an elementary class, say $\Mod(T_{\Gamma \curvearrowright})$.  We let $\varphi_{\Gamma\curvearrowright}$ denote the $T_{\Gamma \curvearrowright}$-formula $\sum_m 2^{-m}d(\gamma_m \cdot x,x)$.  Given an action $\Gamma\curvearrowright^{\sigma}(X,\mu)$, we abuse notation and let $\Fix(\sigma)$ also denote the set of elements of the probability algebra corresponding to $(X,\mu)$ fixed by every element of $\Gamma$, which clearly coincides with the zeroset of $\varphi_{\Gamma\curvearrowright}$.

In the next proposition, we adopt the convention that if $A$ and $B$ are measurable sets in some probability space, then $a$ and $b$ denote the corresponding element of the associated probability algebra.

\begin{prop}\label{spectralaction}
Suppose that $\sigma$ has spectral gap.  Then $\Fix(\sigma)$ is a $\varphi_{\Gamma\curvearrowright}$-definable set.
\end{prop}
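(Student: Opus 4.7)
The plan is to transport the spectral gap hypothesis for the Koopman representation $\pi_\sigma$ on $L^2(X,\mu)$ back to the probability algebra through the natural assignment $a \mapsto \chi_A$. Since $\sigma$ has spectral gap, Corollary~\ref{specgapdef} applied to $\pi_\sigma$ tells me that $\Fix(\pi_\sigma)\subseteq L^2(X,\mu)$ is a $\varphi_\Gamma$-definable subset: for every $\epsilon'>0$ there exists $\delta'>0$ such that $\varphi_\Gamma(\zeta)<\delta'$ implies $d(\zeta,\Fix(\pi_\sigma))\le \epsilon'$.

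Next I would relate the two formulas quantitatively. A direct calculation gives $\|\pi_\sigma(\gamma)\chi_A-\chi_A\|_2^2 = \mu(\gamma A \bigtriangleup A)=d(\gamma\cdot a,a)$, so Cauchy--Schwarz applied to the factorization $2^{-m}=2^{-m/2}\cdot 2^{-m/2}$ yields
$$\varphi_\Gamma(\chi_A)\;=\;\sum_m 2^{-m}\sqrt{d(\gamma_m\cdot a,a)}\;\le\;\sqrt{2\,\varphi_{\Gamma\curvearrowright}(a)}.$$
Consequently, given $\epsilon>0$, if I set $\delta:=(\delta')^2/2$ for a suitable $\delta'$ (chosen in the next paragraph), then the hypothesis $\varphi_{\Gamma\curvearrowright}(a)<\delta$ forces $\varphi_\Gamma(\chi_A)<\delta'$, so there is some $f\in \Fix(\pi_\sigma)$ with $\|\chi_A-f\|_2\le \epsilon'$.

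The remaining step, which I expect to be the principal obstacle, is rounding the almost-invariant $L^2$-function $f$ back to an honest $\Gamma$-invariant \emph{set}. For this I take $f$ to be the orthogonal projection of $\chi_A$ onto $\Fix(\pi_\sigma)$, i.e.\ the conditional expectation $E[\chi_A\mid\mathcal I]$ onto the $\sigma$-subalgebra $\mathcal I$ of $\Gamma$-invariant Borel sets; because $\chi_A$ is real, so is $f$. Put $B:=\{f>1/2\}$. The countable $\Gamma$-invariance of $f$ as an $L^2$-vector (one null exceptional set per $\gamma\in\Gamma$) shows that the probability-algebra class $b$ of $B$ lies in $\Fix(\sigma)$. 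On $A\bigtriangleup B$ one always has $|\chi_A-f|\ge 1/2$ pointwise, so
$$\tfrac14\,\mu(A\bigtriangleup B)\;\le\;\int |\chi_A-f|^2\,d\mu\;\le\;(\epsilon')^2,$$
and hence $d(a,\Fix(\sigma))\le d(a,b)\le 4(\epsilon')^2$. Choosing $\epsilon':=\tfrac12\sqrt{\epsilon}$ at the outset and feeding this into the first paragraph to obtain $\delta'$, and finally $\delta:=(\delta')^2/2$, verifies the definition of $\varphi_{\Gamma\curvearrowright}$-definability. Beyond this rounding, everything is routine bookkeeping; the only delicate point is using the specific representative $f=E[\chi_A\mid\mathcal I]$ so that a real-valued level set is available and inherits invariance from $f$.
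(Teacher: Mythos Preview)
Your proof is correct and follows essentially the same route as the paper: transport the spectral gap of $\pi_\sigma$ to the characteristic function $\chi_A$, obtain an invariant $f\in\Fix(\pi_\sigma)$ close to $\chi_A$, and then round $f$ to an invariant set $B$ via the level set at $1/2$ using the pointwise bound $|\chi_A-f|\geq 1/2$ on $A\bigtriangleup B$. The only cosmetic differences are that the paper invokes the finite Kazhdan-type set $F$ directly rather than passing through $\varphi_\Gamma$-definability and Cauchy--Schwarz, and it takes $B=\{|f|\geq 1/2\}$ for an arbitrary nearby $f$ rather than singling out the (real-valued) conditional expectation.
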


\begin{proof}
Fix $\epsilon>0$ and choose a finite subset $F$ of $\Gamma$ and $\delta>0$ witnessing that $\pi_\sigma$ has spectral gap.  Fix $\eta>0$ sufficiently small and suppose that $A$ is a measurable set such $\varphi_{\Gamma\curvearrowright}(a)<\eta$.  Then, for $\eta$ chosen appropriately, it follows that $1_A$ is $(F,\delta)$-invariant, so there is $f\in \Fix(\pi_{\sigma})$ such that $\|1_A-f\|_2<\epsilon$.  Let $B:=\{x \in X \ : \ |f(x)|\geq \frac{1}{2}\}$; since $f\in \Fix(\pi_{\sigma})$, we have that $b\in \Fix(\sigma)$.  Now notice that
$$\frac{1}{4}\mu(A\triangle B)\leq \int_{A\triangle B}|1_A(x)-f(x)|^2d\mu(x)\leq \|1_A-f\|^2<\epsilon^2.$$  It follows that $d(a,b)<4\epsilon^2$.
\end{proof}

\begin{nrmk}
Unfortunately, the converse of the previous proposition is false.  Indeed, suppose that $\sigma$ is a \textbf{strongly ergodic} action, meaning that every ultrapower of $\sigma$ is ergodic.  Then by Theorem \ref{ultrapowerlocaldefinability}, we have that $\Fix(\sigma)=\{0,1\}$ is a $\varphi_{\Gamma\curvearrowright}$-definable set.  However, there are examples of strongly ergodic actions that do not have spectral gap; see \cite{Schmidt}.
\end{nrmk}

The previous remark notwithstanding, the global picture is still clear:

\begin{thm}
$\Gamma$ has property (T) if and only if $\Fix$ is a $T_{\Gamma\curvearrowright}$-definable set.
\end{thm}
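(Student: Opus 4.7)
The proof splits into two directions. For the forward direction, the plan is to show that $\varphi_{\Gamma\curvearrowright}$ is an almost-near $T_{\Gamma\curvearrowright}$-formula whose zeroset is $\Fix$, and then invoke Theorem \ref{maindefinabilitytheorem}(3). Given property (T), Lemma \ref{kazhdan} supplies a single Kazhdan pair $(F,\delta)$ that works for every unitary representation of $\Gamma$, in particular for every Koopman representation $\pi_\sigma$. The argument of Proposition \ref{spectralaction} then goes through with a modulus of almost-nearness depending only on $\epsilon$, $F$, and $\delta$---and not on the particular action $\sigma$---because Proposition \ref{defT} may be applied with the same $(F,\delta)$ inside every $\pi_\sigma$. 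This uniformity is exactly what is needed for $\varphi_{\Gamma\curvearrowright}$ to be almost-near.

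For the backward direction, suppose $\Fix$ is a $T_{\Gamma\curvearrowright}$-definable set. By Theorem \ref{maindefinabilitytheorem}(4), $\Fix$ commutes with ultraproducts: for every pmp action $\sigma$ and every ultrafilter $\u$, one has $\Fix(\sigma^\u) = \Fix(\sigma)^\u$. When $\sigma$ is ergodic, $\Fix(\sigma) = \{0,1\}$, a discrete two-element subset of the probability algebra whose ultrapower inside the probability algebra of $\sigma^\u$ remains $\{0,1\}$. Hence $\sigma^\u$ is itself ergodic for every $\u$, so $\sigma$ is strongly ergodic in the sense of the remark following Proposition \ref{spectralaction}. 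To pass from ``every ergodic pmp action is strongly ergodic'' to property (T), I would appeal to the Connes--Weiss theorem, which asserts precisely this equivalence.

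The main obstacle is this last step. The $T_{\Gamma\curvearrowright}$-definability hypothesis only constrains pmp actions and, through them, their Koopman representations, whereas property (T) is a statement about every unitary representation of $\Gamma$. Connes--Weiss bridges that gap via the Gaussian-action construction, which manufactures from any orthogonal representation a pmp action whose Koopman representation contains it; without essentially internalizing that construction I see no way to avoid the external appeal. By contrast, the forward direction is a straightforward uniformization of Proposition \ref{spectralaction} once one observes that the Kazhdan pair is the same for every Koopman representation.
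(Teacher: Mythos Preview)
Your proposal is correct and follows essentially the same route as the paper. For the forward direction you uniformize the estimate in Proposition~\ref{spectralaction} via a single Kazhdan pair, exactly as the paper does; for the backward direction you use the ultraproduct characterization of $T_{\Gamma\curvearrowright}$-definability to deduce that every ergodic action is strongly ergodic and then invoke the Connes--Weiss result, while the paper phrases the same step as passing through $\varphi_{\Gamma\curvearrowright}$-definability (item (5) of Theorem~\ref{maindefinabilitytheorem} rather than item (4)) and cites the equivalence as Connes--Schmidt--Weiss. These are cosmetic differences only; your concern about the unavoidable external appeal is shared by the paper, which makes the same citation.
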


\begin{proof}
Suppose $\Gamma$ has property (T).  Then the proof of Proposition \ref{spectralaction} shows that $\Fix(\sigma)$ has spectral gap uniformly over all actions (depending on how the representations of $T$ have spectral gap uniformly).

The converse follows from a theorem of Connes-Schmidt-Weiss (see \cite[Section 6.3]{Tbook}, who show that $\Gamma$ has property (T) if and only if every ergodic action of $\Gamma$ is strongly ergodic; this latter condition follows from the fact that $\Fix(\sigma)$ is a $\varphi_{\Gamma\curvearrowright}$-definable set for each $\sigma$ (which is a priori weaker than $\Fix(\sigma)$ being a $T_{\Gamma\curvearrowright}$-definable set).
\end{proof}

\section{Basic von Neumann algebra theory}

The remainder of this paper concerns von Neumann algebras.  In this section, we review the necessary background material.

\subsection{Preliminaries}  For any $X\subseteq \B(\H)$, set
$$X':=\{T\in \B(\H) \ : \ TS=ST \text{ for all }S\in X\}.$$

Note that $X'$ is a unital subalgebra of $\B(\H)$ for any $X\subseteq \B(\H)$ which is moreover closed under $*$ if $X$ is closed under $*$.

\begin{df}
A \textbf{von Neumann algebra} is a unital $*$-subalgebra $M$ of $\B(\H)$ such that $M''=M$.
\end{df}

\begin{ex}
Note that $\B(\H)'=\mathbb{C}\cdot 1$, so $\B(\H)''=\B(\H)$, whence $\B(\H)$ is a von Neumann algebra.  In particular, when $\dim(\H)=n$, we see that $M_n(\mathbb{C})$ is a von Neumann algebra.
\end{ex}

Here is a much more interesting source of examples:

\begin{ex}
We let $L(\Gamma):=\lambda_\Gamma(\Gamma)''\subseteq \B(\ell^2\Gamma)$ denote the von Neumann algebra generated by $\lambda_\Gamma(\Gamma)$.
\end{ex}

When studying von Neumann algebras, two other topologies on $\B(\H)$ prove very useful:

\begin{df}
Suppose that $\H$ is a Hilbert space.
\begin{enumerate}
\item The \textbf{strong operator topology} (SOT) on $\B(\H)$ is the weakest topology making the maps $T\mapsto Tx:\B(\H)\to \H$ (for $x\in \H$) continuous.
\item The \textbf{weak operator topology} (WOT) on $\B(\H)$ is the weakest topology making the maps $T\mapsto \langle Tx,y\rangle:\B(\H)\to \H$ (for $x,y\in \H$) continuous.
\end{enumerate}
\end{df}

It is readily verified that the weak operator topology refines the strong operator topology, which in turn refines the operator norm topology.  The remarkable \textbf{bicommutant theorem} of von Neumann states that, for a unital $*$-subalgebra $M$ of $\B(H)$, one has that $M$ is a von Neumann algebra (in the above sense, that is, $M=M''$) if and only if $M$ is WOT-closed if and only if $M$ is SOT-closed.

\begin{notation}
For a von Neumann algebra $M$, we let $M_1$ denote its operator norm unit ball.
\end{notation}

\begin{df}
Suppose that $M$ is a von Neumann algebra.  A linear functional $\tr:M\to \mathbb{C}$ is called a \textbf{trace} on $M$ if it satisfies the following properties:
\begin{itemize}
\item (normalization) $\tr(1)=1$;
\item (positivity) $\tr(x^*x)\geq 0$ for all $x\in M$;
\item (faithful) $\tr(x^*x)=0$ if and only if $x=0$;
\item (normality) $\tr|M_1$ is WOT-continuous;
\item (traciality) $\tr(xy)=\tr(yx)$ for all $x,y\in M$.
\end{itemize}
A \textbf{tracial von Neumann algebra} is a pair $(M,\tr)$, where $M$ is a von Neumann algebra and $\tr$ is a trace on $M$.\footnote{One often abuses notation and simply writes $M$ for a tracial von Neumann algebra (suppressing mention of the particular trace on $M$ that is under consideration).}
\end{df}

\begin{ex}

\

\begin{enumerate}
\item $M_n(\mathbb{C})$ has a trace given by $\tr_n(x):=\frac{1}{n}\sum_{i=1}^n x_{ii}$.  However, if $\H$ is infinite-dimensional, then $\B(\H)$ does not have a trace.
\item For any group $\Gamma$, $L(\Gamma)$ admits a trace $\tr$ given by $\tr(x):=\langle x\delta_e,\delta_e\rangle$.  
\end{enumerate}
\end{ex}

Until further notice, fix a tracial von Neumann algebra $(M,\tr)$.  We can define an inner-product $\langle\cdot,\cdot \rangle_{\tr}$ on $M$ given by $\langle x,y\rangle_{\tr}:=\tr(y^*x)$ whose corresponding norm will be denoted $\|x\|_2:=\|x\|_{\tr,2}:=\sqrt{\langle x,x\rangle_{\tr}}$.  We will say that $M$ is \textbf{separable} if the metric on $M$ induced by the norm $\|\cdot\|_2$ is separable.

We let $L^2(M)$ be the Hilbert space obtained by completing the inner product space $(M,\langle\cdot,\cdot\rangle_{\tr})$.  Using the fact that $\|ab\|_2\leq \|a\|\cdot \|b\|_2$, one can readily verify that every $a\in M$ can be viewed as an element $\hat{a}$ of $\B(L^2(M))$ defined by $\hat{a}(b):=ab$ for $b\in M$ (and then extended to the completion by the above observation).  Moreover, the embedding $a\hookrightarrow \hat{a}:M\to \B(L^2(M))$ is SOT-continuous.  This representation of $M$ is referred to as the \textbf{standard representation}.\

Continuing the discussion from the previous paragraph, suppose that $N$ is a von Neumann subalgebra of $M$.  It is straightforward to see that $L^2(N)$ is then a closed subspace of $L^2(M)$ and we let $\mathbb{E}_{N}$ denote the orthogonal projection of $L^2(M)$ onto $L^2(N)$.  It is routine to verify that $\mathbb{E}_N(M)=N$.

\begin{df}
Given a von Neumann algebra $M$, we define its \textbf{center} to be $Z(M):=M'\cap M:=\{x\in M \ : \ xy=yx \text{ for all }y\in M\}$.  A von Neumann algebra with trivial center (that is, $Z(M)=\mathbb{C}\cdot 1$) is called a \textbf{factor}.
\end{df}

\begin{ex}

\

\begin{enumerate}
\item $\B(\H)$ is a factor.
\item $L(\Gamma)$ is a factor if and only if every nontrivial conjugacy class of $\Gamma$ is infinite.  (Such groups are called \textbf{ICC} groups.)
\end{enumerate}
\end{ex}

\begin{df}
A \textbf{II$_1$ factor} is an infinite-dimensional factor that admits a trace.
\end{df}

Consequently, $\B(\H)$ is never a II$_1$ factor.  On the other hand, when $\Gamma$ is a countably infinite ICC group, then $L(\Gamma)$ is a II$_1$ factor.  It is a theorem of Connes \cite{connes} that all countably infinite ICC \textit{amenable} groups (e.g. $S_\infty:=\bigcup_n S_n$) yield the same II$_1$ factor, called the \textbf{hyperfinite II$_1$ factor}, denoted $\R$.  We should also note that a II$_1$ factor admits a unique trace.

\begin{df}
Suppose that $M\subseteq \B(\H_1)$ and $N\subseteq \B(\H_2)$ are von Neumann algebras.  We define the \textbf{tensor product} of $M$ and $N$ to be the von Neumann algebra
$$M\overline{\otimes} N:=\overline{M\odot N}^{\operatorname{WOT}}\subseteq \B(\H_1\otimes \H_2),$$ where $\odot$ denotes the usual vector space tensor product and $\otimes$ denotes the usual Hilbert space tensor product.\footnote{It can be shown that this tensor product does not depend on the representations of $M$ and $N$.}
\end{df}

It is straightforward to verify that if $M$ and $N$ are tracial, then so is $M\overline{\otimes}N$.

\begin{ex}
If $\Gamma_1$ and $\Gamma_2$ are two groups, then $L(\Gamma_1)\overline{\otimes} L(\Gamma_2)\cong L(\Gamma_1\oplus \Gamma_2)$.
\end{ex}

We will also need the notion of \textbf{amalgamated free product} of tracial von Neumann algebras as introduced in \cite{Markov}.  The context is that of two tracial von Neumann algebras $M_1$ and $M_2$ with a common subalgebra $N$.  One then constructs a tracial von Neumann algebra $M_1*_N M_2$ that is generated by $M_1$ and $M_2$, which have their common copies of $N$ identified, and which are positioned as ``freely'' as possible relative to $N$.  We will only need to know one particular instance of this freeness, namely that if $b\in M_1\setminus N$ and $c\in M_2\setminus N$, then $[b,c]\not=0$.

\subsection{Tracial von Neumann algebras as metric structures}  We now briefly describe how to view tracial von Neumann algebras as metric structures; see \cite{FHS2} for the complete details.  The model-theoretic presentation can be motivated by the tracial ultraproduct construction, which we first describe.

Let $((M_i,\tr_i))_{i\in I}$ be a family of tracial von Neumann algebras and let $\u$ be an ultrafilter on $I$.  We let
$$\ell^\infty(M_i):=\left\{(x_i)\in \prod_{i\in I} M_i \ : \ \sup_{i\in I}\|x_i\|<\infty\right\}$$ and
$$c_\u(M_i):=\left\{(x_i) \in \ell^\infty(M_i) \ : \ \lim_\u \|x_i\|_2=0\right\}.$$
The \textbf{tracial ultraproduct} of the family $(M_i)$ is the quotient algebra $\prod_\u M_i:=\ell^\infty(M_i)/c_\u(M_i)$, which can be shown to be a tracial von Neumann algebra with trace $\tr_\u((x_i)^\bullet):=\lim_\u \tr_i(x_i)$.

It is important to note that use of $\|\cdot\|$ in the definition of $\ell^\infty$ versus the use of $\|\cdot\|_2$ in $c_\u$ is not a typo but rather a crucial asymmetry that ensures that the quotient algebra is once again a von Neumann algebra.

Motivated by this asymmetry, one views a tracial von Neumann algebra $M$ as a many-sorted metric structure whose sorts are the operator norm balls of $M$ (say of natural number radii), equipped with all of its $*$-algebra structure and with its trace being viewed as a distinguished predicate.  The metric on each sort is the restriction of the metric induced by the $\|\cdot\|_2$-norm.  We let $L_{\vNa}$ be the metric signature naturally associated to such a structure.  Let us temporarily call the metric structure associated to $M$ the \textbf{dissection} of $M$.

With this set-up in place, one can show that the metric ultraproduct of a family of dissections of tracial von Neumann algebras is naturally isomorphic to the dissection of the tracial ultraproduct of the family of algebras (and really one has an equivalence of categories).  With a slight abuse of terminology, we can thus say that the class of tracial von Neumann algebras is an elementary class (where formally we mean that the class of $L_{\vNa}$-structures obtained from taking dissections of tracial von Neumann algebras is an elementary class), say $\Mod(T_{\vNa})$, and in fact $T_{\vNa}$ is a universal theory.  Concrete axioms for $T_{\vNa}$ are given in \cite{FHS2}.  We will abuse notation and use $M$ both for the tracial  von Neumann algebra $M$ and its dissection.

It is well-known that the tracial ultraproduct of a family of II$_1$ factors is once again a II$_1$ factor and it is also fairly easy to check that an ultraroot of a II$_1$ factor is once again a II$_1$ factor.  It follows that the subclass of II$_1$ factors is also an elementary class, in fact an $\forall\exists$-axiomatizable class.  Once again, concrete axioms for this class are given in \cite{FHS2}.  It is this latter fact that allows one to show that an existentially closed tracial von Neumann algebra is necessarily a II$_1$ factor.

\subsection{Property Gamma and the McDuff property}

There are a couple of properties that a II$_1$ factor may or may not have that will become relevant later in this paper.  The first was introduced by Murray and von Neumann in \cite{murrayvn}.  We will need the notations $[x,y]:=xy-yx$ and $\U(M):=\{x\in M\ : \ uu^*=u^*u=1\}$.

\begin{df}
We say that a II$_1$ factor $M$ has \textbf{property Gamma} if, for any finite $F\subseteq M$ and any $\epsilon>0$, there is $u\in \U(M)$ with $\tr(u)=0$ and such that $\max_{x\in F}\|[x,u]\|_2<\epsilon$.
\end{df}

The point of introducing property Gamma was that it allowed Murray and von Neumann to distinguish between $\R$ and $L(\mathbb{F}_2)$.  Indeed, they proved that any unitary $u\in \U(L(\mathbb{F}_2))$ that almost commutes with the unitaries associated with the generators of $\mathbb{F}_2$ is very close to the center of $L(\mathbb{F}_2)$, that is, is close to $\mathbb{C}$; since the unitaries in $\mathbb{C}$ cannot have trace close to $0$, this shows that $L(\mathbb{F}_2)$ does not have property Gamma.  Combined with the easy observation that $\R$ does have property Gamma, they were able to conclude that $\R\not\cong L(\mathbb{F}_2)$.  In \cite[3.2.2]{FHS3}, the authors showed that property Gamma is in fact an axiomatizable property of separable II$_1$ factors, whence one can conclude that $\R\not\equiv L(\mathbb{F}_2)$.

The other property that will become relevant is the following:

\begin{df}[McDuff \cite{mcduff}]
A separable II$_1$ factor $M$ is said to be \textbf{McDuff} if $M\overline{\otimes} \R\cong M$.  
\end{df}

From the presentation $\R=\overline{\bigotimes} M_{2^n}(\mathbb{C})$, it is relatively straightforward to see that $\R$ is McDuff.  Consequently, $M\overline{\otimes} \R$ is McDuff for any separable II$_1$ factor $M$.  Following Popa \cite{popasmcduff}, when $M$ is non-Gamma, we call $M\overline{\otimes} \R$ \textbf{strongly McDuff}.

In Remark \ref{McDuffimpliesGamma}, we will mention that McDuff implies property Gamma.  Dixmier and Lance \cite{DL} gave an example of a separable II$_1$ factor $M$ that has property Gamma but is not McDuff.  This $M$ provided the third isomorphism class of separable II$_1$ factors.

In \cite[Proposition 3.9]{FHS3}, the authors show that McDuffness is also axiomatizable for separable II$_1$ factors.  Consequently, the $M$ from the previous paragraph also represents a third elementary equivalence class.

\section{Spectral gap subalgebras}

\subsection{Introducing spectral gap for subalgebras}  Until further notice, suppose that $M$ is a separable II$_1$ factor and $N\subseteq M$ is a subalgebra.  For the sake of readability, all $L_{\vNa}$-formulae appearing below will be assumed to have their free variable ranging over the sort for the operator norm unit ball.

Although the presentation in Section 3 was in terms of countable groups, one can also define what it means for a unitary representation of an arbitrary (not necessarily countable) group to have spectral gap (as is done in \cite{AP}).  In particular, it makes sense to speak of a unitary representation of $U(N)$ having spectral gap.

\begin{df}
We say that $N$ \textbf{has spectral gap in} $M$ if the unitary representation $u\mapsto uxu^*:\U(N)\to \U(L^2(M))$ of $U(N)$ has spectral gap.
\end{df}

\begin{ex}
Suppose that $\Gamma$ has property (T).  Then $L(\Gamma)$ has spectral gap in $M$ for any $M$ containing $L(\Gamma)$.  Indeed, fix $\epsilon>0$ and take $F\subseteq \Gamma$ finite and $\delta>0$ witnessing that $\Gamma$ has property (T).  For $\gamma\in \Gamma$, set $u_\gamma:=\lambda_\Gamma(\gamma)$.  It follows that, for any $a\in L^2(M)$, if $\max_{\gamma\in F}\|u_\gamma a u_\gamma^*-a\|<\delta$, then there is $b\in L^2(M)$ with $u_\gamma bu_\gamma^*=b$ for all $\gamma\in \Gamma$ and with $\|a-b\|_2\leq \epsilon$.  It just remains to observe that such $b$ then commutes with all of $L(\Gamma)$ and thus with all of $\U(L(\Gamma))$.
\end{ex}

\begin{nrmk}\label{Tremark}
The preceding example can be generalized.  Indeed, there is a notion of a II$_1$ factor having property (T) (examples of which include $L(\Gamma)$ for $\Gamma$ an ICC property (T) group) and such II$_1$ factors will have spectral gap in any extension.  In a project in progress with Bradd Hart and Thomas Sinclair, we generalize the results in Section 3 by showing that a II$_1$ factor $M$ has property (T) if and only if the set of central vectors is a definable set relative to theory of $M$-$M$ bimodules.
\end{nrmk}


Let us recast the notion of spectral gap subalgebra in more concrete terms.  Indeed, we have that $N$ has spectral gap in $M$ if, for all $\epsilon>0$, there are $u_1,\ldots,u_n\in \U(M)$ and $\delta>0$ such that, for all $x\in M$, 
$$\|[x,u_i]\|_2\leq \delta\|x\|_2\Rightarrow \|x-\mathbb{E}_{N'\cap M}(x)\|_2\leq \epsilon\|x\|_2.$$

By weakening the previous statement by asking that $x$ above only range over $M_1$, one obtains a useful weakening of the notion of spectral gap subalgebra:

\begin{df}
$N$ has \textbf{weak spectral gap} (or \textbf{w-spectral gap}) in $M$ if for all $\epsilon>0$, there are $u_1,\ldots,u_n\in \U(M)$ and $\delta>0$ such that, for all $x\in M_1$, 
$$\|[x,u_i]\|_2\leq \delta\|x\|_2\Rightarrow \|x-\mathbb{E}_{N'\cap M}(x)\|_2\leq \epsilon\|x\|_2.$$
\end{df}

We leave the following lemma as an exercise to the reader:

\begin{lemma}\label{ultraspecgap}

\

\begin{enumerate}
\item $N$ has spectral gap in $M$ if and only if $N'\cap L^2(M)^\u=L^2(N'\cap M)^\u$. 
\item $N$ has w-spectral gap in $M$ if and only if $N'\cap M^\u=(N'\cap M)^\u$.
\end{enumerate}
\end{lemma}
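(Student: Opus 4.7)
Part (1) is obtained by applying Theorem~\ref{ultrapowerspecgap} to the conjugation representation $\pi\colon \U(N) \to \U(L^2(M))$ given by $\pi(u)\xi := u\xi u^*$. By the definition of $N$ having spectral gap in $M$, this $\pi$ has spectral gap iff $N$ does. I would then identify $\Fix(\pi) = N' \cap L^2(M)$ directly from the definition of the conjugation action and the fact that $\U(N)$ spans $N$; a standard argument, applying the $\|\cdot\|_2$-continuous conditional expectation $\mathbb{E}_{N' \cap M}$ to a $\|\cdot\|_2$-approximating sequence in $M$, further identifies this with $L^2(N' \cap M)$. Meanwhile, $\Fix(\pi^\u) = N' \cap L^2(M)^\u$ unfolds directly from the ultrapower construction. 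Theorem~\ref{ultrapowerspecgap} then reads: $\pi$ has spectral gap iff $\Fix(\pi^\u) = \Fix(\pi)^\u$, which is exactly the claim $N' \cap L^2(M)^\u = L^2(N' \cap M)^\u$.

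For the forward direction of (2), take $x = (x_i)^\bullet \in N' \cap M^\u$ with $x_i \in M_1$ and set $y_i := \mathbb{E}_{N' \cap M}(x_i) \in (N' \cap M)_1$ (the conditional expectation is operator-norm contractive). It suffices to show $\lim_\u \|x_i - y_i\|_2 = 0$, since $(N' \cap M)^\u$ is $\|\cdot\|_2$-closed in $M^\u$. If $\lim_\u \|x_i\|_2 = 0$, this is immediate from the $\|\cdot\|_2$-contractivity of $\mathbb{E}_{N' \cap M}$. Otherwise, fix $\epsilon > 0$, let $F \subseteq \U(N)$ and $\delta > 0$ be supplied by w-spectral gap, and use that $[x,u] = 0$ in $M^\u$ for $u \in F$ to conclude $\|[x_i, u]\|_2 \to_\u 0$; combined with a positive lower bound on $\|x_i\|_2$ this gives $\max_{u \in F}\|[x_i, u]\|_2 \leq \delta \|x_i\|_2$ along $\u$, whence $\|x_i - y_i\|_2 \leq \epsilon \|x_i\|_2 \leq \epsilon$ along $\u$. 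Letting $\epsilon \to 0$ completes this direction.

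The reverse direction of (2) I would do by contradiction. A failure of w-spectral gap supplies $\epsilon_0 > 0$ such that for every finite $F \subseteq \U(N)$ and every $\delta > 0$ there is a witness in $M_1$. Fixing a countable dense subset $\{u_m\}$ of $\U(N)$ (using separability of $M$), construct $x_n \in M_1$ with $\max_{m \leq n}\|[x_n, u_m]\|_2 \leq (1/n)\|x_n\|_2$ and $\|x_n - \mathbb{E}_{N' \cap M}(x_n)\|_2 > \epsilon_0 \|x_n\|_2$, and by the scale invariance of the w-spectral gap condition arrange $\|x_n\|_{\mathrm{op}} = 1$. If $\liminf_n \|x_n\|_2 > 0$, then along a suitable nonprincipal ultrafilter $\u$ the element $(x_n)^\bullet$ lies in $N' \cap M^\u$ at positive $\|\cdot\|_2$-distance from $(N' \cap M)^\u$, directly contradicting the hypothesis. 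The main obstacle is the case $\|x_n\|_2 \to 0$, in which the naive element collapses to $0$ in $M^\u$; I would handle it by an amplification argument, distributing rescaled copies of $x_n$ across an orthogonal family of partial isometries drawn from the II$_1$-factor structure of $M$ so as to inflate $\|\cdot\|_2$ while preserving the commutator-to-$\|\cdot\|_2$ and projection-distance-to-$\|\cdot\|_2$ ratios, thereby producing a genuine element of $N' \cap M^\u$ outside $(N' \cap M)^\u$.
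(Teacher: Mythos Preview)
The paper leaves this lemma as an exercise for the reader, so there is no proof to compare against directly. Your treatment of part (1) and of the forward direction of (2) is correct and is exactly the argument one would expect. For the reverse direction of (2), you have put your finger on a real subtlety: in the Hilbert-space setting of Proposition~\ref{spectgap}, the scale-invariant formulation (item~(1)) and the absolute formulation (item~(3)) are equivalent because one may freely rescale vectors, but once one restricts to the operator-norm unit ball $M_1$ this rescaling is no longer available, and the $\|x\|_2$-normalized definition written in the paper does not obviously reduce to the $\|x\|_2$-free one.

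Your proposed amplification repair, however, is not convincing as written. For the construction to succeed, the partial isometries you introduce would have to simultaneously (i) boost $\|\cdot\|_2$ relative to $\|\cdot\|_{\mathrm{op}}$, (ii) approximately commute with each $u_m\in\U(N)$ so that the commutator-to-$\|\cdot\|_2$ ratio is preserved, and (iii) not move the element closer to $N'\cap M$. Condition~(ii) essentially forces the partial isometries to lie (nearly) in $N'\cap M$, and there is no reason $N'\cap M$ should contain the required matrix units; indeed $N'\cap M$ could be $\mathbb{C}$. The cleanest resolution is to note that in the literature w-spectral gap is typically \emph{defined} by the ultrapower identity $N'\cap M^\u=(N'\cap M)^\u$, which is equivalent to the $\|x\|_2$-free version of the $\epsilon$--$\delta$ condition; Theorem~\ref{defcomm1}, stated as ``almost immediate from the definition,'' also implicitly uses that reading. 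With the $\|x\|_2$-free formulation your reverse-direction argument goes through with no amplification needed: the failure of w-spectral gap directly produces $x_n\in M_1$ with $\|x_n-\mathbb{E}_{N'\cap M}(x_n)\|_2>\epsilon_0$ and $\max_{m\le n}\|[x_n,u_m]\|_2<1/n$, and $(x_n)^\bullet$ gives the desired element of $(N'\cap M^\u)\setminus (N'\cap M)^\u$.
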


In general, spectral gap and weak spectral gap are different notions (see, for example, \cite[Remark 2.2]{popa09}).  There is an important case in which they coincide:

\begin{fact}[Connes \cite{connes}]\label{connesfact}
Suppose that $N$ is a II$_1$ factor.  Then the following are equivalent:
\begin{enumerate}
\item $N$ has spectral gap in $N$;
\item $N$ has w-spectral gap in $N$ (i.e. $N'\cap N^\u=\mathbb{C}\cdot 1$);
\item $N$ does not have property Gamma.
\end{enumerate}
Moreover, if these equivalent conditions hold, then $N$ has spectral gap in $N\overline{\otimes} S$ for any tracial von Neumann algebra $S$.
\end{fact}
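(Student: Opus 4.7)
My plan is to prove the cycle $(1) \Rightarrow (2) \Rightarrow (3) \Rightarrow (1)$ and then handle the moreover clause separately. Throughout, I use the reformulations supplied by Lemma \ref{ultraspecgap}: since $N$ is a factor, $N' \cap N = \mathbb{C}$, so $(1)$ becomes $N' \cap L^2(N)^\u = \mathbb{C}$ and $(2)$ becomes $N' \cap N^\u = \mathbb{C}$. The implication $(1) \Rightarrow (2)$ is then immediate from the natural embedding $N^\u \hookrightarrow L^2(N)^\u$ (induced by $N \hookrightarrow L^2(N)$), which intertwines the left $N$-action, so $N' \cap N^\u \subseteq N' \cap L^2(N)^\u = \mathbb{C}$.

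For $(2) \iff (3)$, I would invoke the classical identification (essentially due to Dixmier and Murray--von Neumann) of property Gamma for separable II$_1$ factors with non-triviality of $N' \cap N^\u$. The direction $(3) \Rightarrow (2)$ is a direct diagonal argument: fix a countable dense sequence $(x_k)$ in $N$ and use property Gamma to produce trace-zero unitaries $u_n \in \U(N)$ with $\|[x_k, u_n]\|_2 < 1/n$ for $k \leq n$; then $(u_n)^\bullet$ is a trace-zero (hence non-scalar) unitary in $N' \cap N^\u$. The converse uses the structural fact that whenever $N' \cap N^\u$ is non-trivial it is actually diffuse (in fact contains a copy of the hyperfinite II$_1$ factor), which yields trace-zero unitaries in $N' \cap N^\u$ that lift to witnesses of property Gamma.

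The deepest step, $(3) \Rightarrow (1)$, is the heart of Connes's theorem and I expect it to be the main obstacle. Suppose $(1)$ fails; by the reformulation above there exists a non-scalar $\xi \in N' \cap L^2(N)^\u$. The goal is to convert this $L^2$-element into a non-scalar element of $N' \cap N^\u$, which by the already-established $(2) \iff (3)$ would contradict $(3)$. Connes's trick is to view $\xi$ (via the natural embedding of $L^2(N)^\u$ into $L^2(N^\u)$) as a closed, densely defined operator affiliated with $N^\u$; since $\xi$ is in $N'$, so are all bounded spectral projections of $|\xi|$, and they all lie in $N' \cap N^\u$. Non-scalarity of $\xi$ forces at least one such spectral projection to be non-trivial, providing the required non-scalar element of $N' \cap N^\u$.

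For the moreover clause, assume $(1)$ holds and let $(F,\delta)$ be a Kazhdan pair for the adjoint $\U(N)$-representation on $L^2_0(N) := L^2(N) \ominus \mathbb{C}$. The adjoint action of $\U(N)$ on $L^2(N \overline{\otimes} S) \cong L^2(N) \otimes L^2(S)$ is the tensor product of the adjoint action on $L^2(N)$ with the trivial action on $L^2(S)$; its fixed vectors form $\mathbb{C} \otimes L^2(S) = L^2(S) = L^2(N' \cap (N \overline{\otimes} S))$ (using $N' \cap (N \overline{\otimes} S) = \mathbb{C} \overline{\otimes} S = S$ since $N$ is a factor). Writing $h_F := \frac{1}{|F|}\sum_{u \in F}\operatorname{Ad}(u)$, the spectral-gap hypothesis gives $\sigma(h_F|L^2_0(N)) \subseteq [-1, 1-\delta']$ for some $\delta' > 0$; the corresponding tensor-product operator $(h_F|L^2_0(N)) \otimes I$ on $L^2_0(N) \otimes L^2(S)$ has the same spectrum, yielding the desired spectral gap of $N$ in $N \overline{\otimes} S$.
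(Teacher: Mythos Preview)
The paper does not prove this statement: it is recorded as a Fact attributed to Connes \cite{connes} with no argument given, so there is no in-paper proof to compare your proposal against.

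Your outline follows the standard route and is substantively correct. One cosmetic slip: in your treatment of $(2)\iff(3)$ you have the two directions swapped. The diagonal argument you describe (assume property Gamma, build a trace-zero central unitary $(u_n)^\bullet\in N'\cap N^\u$) establishes $\neg(3)\Rightarrow\neg(2)$, i.e.\ $(2)\Rightarrow(3)$; the diffuseness argument (non-trivial $N'\cap N^\u$ is automatically diffuse, hence contains trace-zero unitaries witnessing Gamma) is what gives $(3)\Rightarrow(2)$. The identification of Connes's spectral-truncation trick as the heart of $(3)\Rightarrow(1)$, and the tensor-product spectral computation for the moreover clause (using $N'\cap(N\overline{\otimes}S)=S$ for $N$ a factor), are both on target.
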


\begin{nrmk}
As a corollary of the previous fact, if $N$ is a II$_1$ factor with spectral gap in some extension $M$, then $N$ does not have property Gamma.
\end{nrmk}

\begin{nrmk}\label{McDuffimpliesGamma}
In her paper \cite{mcduff}, McDuff shows that a separable II$_1$ factor $M$ is McDuff if and only if $M'\cap M^\u$ is not abelian.  Combined with Fact \ref{connesfact}, we see that McDuff implies property Gamma.
\end{nrmk}

\subsection{Spectral gap and definability}  Let $\{u_n\}$ be an enumeration of a countable dense subset of $\U(N)$ and let $\varphi_N(x):=\sum_n 2^{-n}\|[x,u_n]\|_2$, a formula in $M$ over $N$.  Note that $Z(\varphi_N)=N'\cap M_1$.  The following theorem is almost immediate from the definition:  

\begin{thm}\label{defcomm1}
$N$ has w-spectral gap in $M$ if and only if $N'\cap M_1$ is a $\varphi_N$-definable subset of $M_1$.
\end{thm}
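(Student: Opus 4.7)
The plan is to derive the theorem as an essentially immediate corollary of Theorem \ref{ultrapowerlocaldefinability} combined with the ultrapower characterization of w-spectral gap in Lemma \ref{ultraspecgap}(2). Fix a nonprincipal ultrafilter $\u$ on $\n$. By Theorem \ref{ultrapowerlocaldefinability}, $N'\cap M_1 = Z(\varphi_N^M)$ is $\varphi_N$-definable if and only if
$$Z(\varphi_N^M)^\u = Z(\varphi_N^{M^\u}).$$
My task is then to identify both sides of this equation with the corresponding objects appearing in Lemma \ref{ultraspecgap}(2).

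The right-hand side is straightforward to compute. Since each $u_n \in \U(N) \subseteq M_1$, the map $x \mapsto \|[x,u_n]\|_2$ makes sense on $(M^\u)_1$ and is bounded by $2\|x\|$. Moreover, because $\{u_n\}$ is $\|\cdot\|_2$-dense in $\U(N)$ and every element of $N$ is a finite linear combination of unitaries of $N$, an element $x \in (M^\u)_1$ satisfies $[x,u_n]=0$ for every $n$ if and only if $x \in N' \cap (M^\u)_1$. Hence $Z(\varphi_N^{M^\u}) = N' \cap (M^\u)_1$.

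The left-hand side $Z(\varphi_N^M)^\u$, interpreted as the metric-ultrapower of the closed subset $N' \cap M_1 \subseteq M_1$, is naturally identified with $(N' \cap M)^\u \cap (M^\u)_1$: representatives live in $N' \cap M_1$ and the equivalence relation is agreement $\u$-almost everywhere in $\|\cdot\|_2$, which matches the tracial ultraproduct construction for $N' \cap M$ restricted to the unit ball. Thus the definability condition becomes
$$(N' \cap M)^\u \cap (M^\u)_1 = N' \cap (M^\u)_1.$$
Since both sides are operator-norm closed and every element of $M^\u$ is a scalar multiple of one in $(M^\u)_1$, this unit-ball equality is equivalent to the full equality $(N' \cap M)^\u = N' \cap M^\u$. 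Applying Lemma \ref{ultraspecgap}(2) finishes both directions at once.

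The only step requiring any care is the identification $Z(\varphi_N^M)^\u = (N' \cap M)^\u \cap (M^\u)_1$ and the reduction from the unit-ball statement to the full-algebra statement; both are routine bookkeeping given the way the dissection of a tracial von Neumann algebra interacts with the tracial ultrapower (as recalled in Section 4.2). I do not anticipate any real obstacle beyond making this identification explicit.
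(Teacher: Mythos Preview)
Your argument is correct, but it takes a different route from the paper. The paper does not give a proof at all; it simply says the theorem is ``almost immediate from the definition,'' meaning one is meant to compare the $\epsilon$--$\delta$ clause in the definition of w-spectral gap directly with the $\epsilon$--$\delta$ clause in the definition of $\varphi_N$-definability. The only work in that direct comparison is passing between a finite set of unitaries $u_1,\ldots,u_n\in\U(N)$ and the weighted infinite sum defining $\varphi_N$ (using density of $\{u_n\}$ in $\U(N)$), and noting that the conditional expectation $\mathbb{E}_{N'\cap M}$ is contractive in operator norm, so it lands in $N'\cap M_1$.

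You instead pass through the ultrapower characterizations: Theorem~\ref{ultrapowerlocaldefinability} for $\varphi$-definability and Lemma~\ref{ultraspecgap}(2) for w-spectral gap, and then match the two ultrapower equalities. This is perfectly valid and arguably more conceptual, since it makes explicit that both notions are governed by the same ultrapower condition $N'\cap M^\u=(N'\cap M)^\u$. One small point: Theorem~\ref{ultrapowerlocaldefinability} as stated requires the equality for \emph{every} ultrafilter, while you fix a single nonprincipal $\u$ on $\n$; this is fine because $M$ is assumed separable, so failure of $\varphi_N$-definability is witnessed by a sequence in $M_1$ and hence detected by any nonprincipal ultrafilter on $\n$ --- but you should say so. The trade-off is that the paper's route avoids invoking Lemma~\ref{ultraspecgap} and the ultrapower machinery entirely, while your route makes the connection to Lemma~\ref{ultraspecgap}(2) transparent and requires no $\epsilon$--$\delta$ bookkeeping.
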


\begin{nrmk}\label{unfortunate2}
As in the case of spectral gap for unitary representations, once again we cannot replace ``$\varphi_N$-definable'' with ``definable'' in the previous theorem.  For instance, if $N=M$, then $M'\cap M_1=\mathbb{S}^1$, which is a definable subset of $M$ (as it is compact), but as we just saw in the last subsection, $M$ has w-spectral in itself if and only if $M$ does not have property Gamma. 
\end{nrmk}

For the sake of sanity, let us say that a subalgebra $Q$ of $M$ is definable if $Q\cap M_1$ is a definable subset of $M_1$.

Na\"ively speaking, it seems that $N'\cap M$ should always be a definable subalgebra of $M$.  Indeed, $N'\cap M_1$ is the zeroset of $\sup_{y\in N_1}\|[x,y]\|_2$.  There are two issues with this train of thought.  First, the aforementioned expression is only a formula in $M$ if $N$ is a definable subalgebra of $M$.  Secondly, zerosets need not be definable sets.  It turns out that the second issue is not really an issue at all.

\begin{fact}[See Lemma 3.6.5(ii) in \cite{SS}]\label{dixmier}
For any $x\in M_1$, we have $$d(x,N_1)\leq \sup_{y\in N_1}\|[x,y]\|_2.$$
\end{fact}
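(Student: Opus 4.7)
The plan is to invoke the Dixmier approximation theorem applied to the action of $\U(N)$ on $M$ by conjugation: for any $x\in M$, the operator-norm-closed convex hull $K_N(x)$ of the unitary orbit $\{uxu^*:u\in \U(N)\}$ has nonempty intersection with $N'\cap M$. Granting this averaging statement (which is essentially the content being cited from \cite{SS}), the inequality in the fact should follow by combining an element produced by Dixmier averaging with the unitary invariance of $\|\cdot\|_2$. Note that, in context, the left-hand side is the distance to the unit ball of the relative commutant $N'\cap M$, which is precisely the zeroset being shown to be definable in the preceding discussion.

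First, I would fix $x\in M_1$ and select any $y\in K_N(x)\cap(N'\cap M)$. Since each conjugate $uxu^*$ lies in $M_1$, and $M_1$ is convex and operator-norm closed, $K_N(x)\subseteq M_1$; in particular $y$ lies in the unit ball of the relative commutant, supplying a candidate close point. Next, I would estimate $\|y-x\|_2$. By traciality one has $\|au\|_2=\|ua\|_2=\|a\|_2$ for every unitary $u$, and so for each $u\in \U(N)\subseteq N_1$,
$$\|uxu^*-x\|_2=\|ux-xu\|_2=\|[x,u]\|_2\leq \sup_{z\in N_1}\|[x,z]\|_2.$$
The triangle inequality propagates this bound to arbitrary convex combinations, and the domination $\|\cdot\|_2\leq \|\cdot\|$ on $M$ lets it survive passage to the operator-norm closure. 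Thus $\|y-x\|_2\leq \sup_{z\in N_1}\|[x,z]\|_2$, and combining this with the membership $y\in (N'\cap M)_1$ yields the desired distance estimate.

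The step where real work is hidden is the Dixmier averaging theorem itself, which is exactly what is being imported from Lemma~3.6.5 of \cite{SS}; once that averaging output is available, what remains is a short convexity-plus-unitary-invariance computation. A minor subtlety worth flagging is the mixing of two norms: the averaging convex hull is closed in operator norm, while the final estimate is in $\|\cdot\|_2$, but the bound $\|\cdot\|_2\leq \|\cdot\|$ handles this transition cleanly, since operator-norm convergence forces $\|\cdot\|_2$-convergence on norm-bounded sets.
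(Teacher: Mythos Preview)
The paper does not supply its own proof of this fact; it simply imports it from Sinclair--Smith. Your argument via relative Dixmier averaging is correct and is exactly the proof that appears in the cited reference: one picks any element of $K_N(x)\cap (N'\cap M)$, observes it lies in the unit ball, and bounds its $\|\cdot\|_2$-distance to $x$ by the uniform commutator bound using convexity and unitary invariance. You are also right to flag the typo in the displayed inequality: as printed, the left-hand side reads $d(x,N_1)$, but the intended (and correct) statement is $d\bigl(x,(N'\cap M)_1\bigr)\leq \sup_{y\in N_1}\|[x,y]\|_2$, since the zeroset of the right-hand side is the relative commutant and this is precisely what is needed for Corollary~\ref{defcomm2}.
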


Theorem \ref{localdefinabilitytheorem} and the previous fact immediately imply the following:

\begin{cor}\label{defcomm2}
Suppose that $N$ is a definable subalgebra of $M$.  Then $N'\cap M$ is a definable subalgebra of $M$.
\end{cor}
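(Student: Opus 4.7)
The plan is to produce a single explicit formula $\varphi$ in $M$ over $N$ whose zeroset is $N'\cap M_1$ and which witnesses item (3) of Theorem \ref{localdefinabilitytheorem}; the assumption that $N$ is definable is exactly what lets the natural candidate for $\varphi$ be a formula at all, and Fact \ref{dixmier} supplies the modulus for free.

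First, because $N$ is a definable subalgebra of $M$, the set $N_1$ is definable in $M$, so item (2) of Theorem \ref{localdefinabilitytheorem} applied to the simple formula $\psi(x,y):=\|[x,y]\|_2$ tells us that the $M$-function
$$x\longmapsto\sup_{y\in N_1}\|[x,y]\|_2$$
is the interpretation of some formula in $M$ over $N$; call this formula $\varphi(x)$. A direct computation shows $Z(\varphi^{M})=N'\cap M_1$, using that $N_1$ linearly spans $N$ so that commuting with $N_1$ is the same as commuting with all of $N$.

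Second, Fact \ref{dixmier} gives the pointwise bound $d(x,N'\cap M_1)\le\varphi^{M}(x)$ for every $x\in M_1$ (which is the content of the cited Sinclair--Smith lemma: the supremum of commutator norms against $N_1$ dominates the distance to the relative commutant). Hence, given any $\e>0$, the choice $\de:=\e$ already verifies the implication
$$\varphi^{M}(x)<\de\ \Longrightarrow\ d(x,N'\cap M_1)\le\e,$$
for all $x\in M_1$. This is precisely condition (3) of Theorem \ref{localdefinabilitytheorem} applied to $X=N'\cap M_1$ with the formula $\varphi$ just constructed, and we conclude that $N'\cap M_1$ is definable in $M$ over $N$, i.e.\ $N'\cap M$ is a definable subalgebra of $M$.

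I do not expect a real obstacle: once $N$'s definability is invoked to upgrade $\sup_{y\in N_1}\|[x,y]\|_2$ from a mere $M$-function to an honest formula, Fact \ref{dixmier} supplies a linear (hence trivial) modulus of $\varphi$-definability. The only point requiring care is reading Fact \ref{dixmier} with the relative commutant $N'\cap M_1$ on the left, which is the version needed here and the one dictated by the ambient geometry.
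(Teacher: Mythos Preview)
Your proof is correct and is exactly the argument the paper has in mind: use definability of $N$ to make $\varphi(x)=\sup_{y\in N_1}\|[x,y]\|_2$ a genuine formula, then invoke Fact~\ref{dixmier} as the almost-near witness for Theorem~\ref{localdefinabilitytheorem}(3). Your closing remark is apt: Fact~\ref{dixmier} must be read with $d(x,(N'\cap M)_1)$ on the left (the paper's ``$d(x,N_1)$'' is a typo), and you identified the needed version correctly.
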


\begin{ex}
Suppose that $N$ is a non-Gamma II$_1$ factor and $S$ is any tracial von Neumann algebra.  Then by Fact \ref{connesfact}, $N$ has w-spectral gap in $N\overline{\otimes}S$, whence $S=N'\cap (N\overline{\otimes} S)$ is a definable subalgebra of $N\overline{\otimes} S$.  Moreover, by Corollary \ref{defcomm2}, we have that $N=S'\cap (N\overline{\otimes} S)$ is also a definable subalgebra of $N\otimes S$.
\end{ex}

Combining Theorem \ref{defcomm1} with Corollary \ref{defcomm2} yields:

\begin{cor}\label{defcomm3}
Suppose that $N$ has w-spectral gap in $M$.  Then $(N'\cap M)'\cap M$ is a definable subalgebra of $M$.
\end{cor}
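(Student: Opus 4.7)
The plan is to simply chain together Theorem \ref{defcomm1} and Corollary \ref{defcomm2}. First, by the hypothesis that $N$ has w-spectral gap in $M$, Theorem \ref{defcomm1} tells us that $N' \cap M_1$ is a $\varphi_N$-definable subset of $M_1$. Since $\varphi_N$-definability implies definability (this is the Corollary stated just before Theorem \ref{ultrapowerlocaldefinability}), we conclude that $N' \cap M$ is a definable subalgebra of $M$ in the sense introduced after Remark \ref{unfortunate2}.

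Now set $Q := N' \cap M$. We have just shown that $Q$ is a definable subalgebra of $M$. Applying Corollary \ref{defcomm2} with $N$ replaced by $Q$ (which is legitimate, since that corollary's only hypothesis is that the subalgebra whose commutant one wishes to take is definable) yields that $Q' \cap M = (N' \cap M)' \cap M$ is a definable subalgebra of $M$. This is exactly the desired conclusion.

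The only thing to double-check is that there is no issue in iterating the use of these results: Corollary \ref{defcomm2} rests on Fact \ref{dixmier}, which applies uniformly to any subalgebra, and on Theorem \ref{localdefinabilitytheorem}, whose implication (3)$\Rightarrow$(1) requires only that the ``witnessing formula'' $\sup_{y \in Q_1} \|[x,y]\|_2$ indeed be a formula in $M$. This latter point is guaranteed precisely because $Q = N' \cap M$ is definable, so quantifying over $Q_1$ yields a formula by item (2) of Theorem \ref{localdefinabilitytheorem}. I do not anticipate any real obstacle here; the corollary is a formal composition of the two preceding results, and the only ``content'' lies in having already established that w-spectral gap translates into $\varphi_N$-definability of the relative commutant's unit ball.
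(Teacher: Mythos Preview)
Your argument is correct and is exactly the route the paper takes: the corollary is stated immediately after the phrase ``Combining Theorem \ref{defcomm1} with Corollary \ref{defcomm2} yields,'' and your proof simply spells out that combination. The extra paragraph verifying that Corollary \ref{defcomm2} applies with $Q=N'\cap M$ in place of $N$ is a fair sanity check, but the paper treats this as immediate.
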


\subsection{Relative bicommutants and e.c. II$_1$ factors}  Recalling von Neumann's double commutant theorem, one might see the above relative bicommutant and guess that $(N'\cap M)'\cap M$ should always coincide with $N$.  However, this is often not the case.  Indeed, given a II$_1$ factor $M$, one can always find a proper \emph{irreducible} subfactor $N$ in the sense that $N'\cap M=\mathbb{C}\cdot 1$, in which case $(N'\cap M)'\cap M=M$.  (For $M\not=\R$, this is due to Popa \cite[Corollary 4.1]{PopaInv}\footnote{We thank Stefaan Vaes for pointing us to this reference.}; for $M=\R$, this follows from the work of Jones in \cite{jones}.)

In connection with the above discussion, the following exercise in Hodges' book \emph{Building Models by Games} \cite[Exercise 3.3.2(b)]{hodges} proved inspiring to the current discussion:

\begin{fact}
Suppose that $G$ is an existentially closed group and $a\in G$.  Then $C_G(C_G(a))=\langle a\rangle$.\footnote{Here, for $X\subseteq G$, $C_G(X):=\{b\in G \ : \ ab=ba \text{ for all }a\in X\}$, $C_G(a):=C_G(\{a\})$, and $\langle a \rangle$ denotes the subgroup of $G$ generated by $a$.}
\end{fact}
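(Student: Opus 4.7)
The plan is to prove the nontrivial inclusion $C_G(C_G(a)) \subseteq \langle a \rangle$ by contrapositive: given any $b \in G \setminus \langle a \rangle$, I would exhibit an element $c \in G$ such that $[c,a] = 1$ while $[c,b] \neq 1$, which shows that $b \notin C_G(C_G(a))$. The reverse inclusion $\langle a \rangle \subseteq C_G(C_G(a))$ is immediate, since every power of $a$ commutes with every element that commutes with $a$.

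To produce such a $c$, I would exploit the existential closedness of $G$: the statement
\[
\exists y\,\bigl(ya = ay \ \wedge \ yb \neq by\bigr)
\]
is an existential formula in the language of groups with parameters $a,b \in G$, so if it is satisfied in some group $H$ containing $G$ as a subgroup, then by existential closedness it is already satisfied in $G$, which is exactly what we need.

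For the construction of the witness extension, I would form the amalgamated free product
\[
H := G \ast_{\langle a \rangle} \bigl(\langle a \rangle \times \langle t \rangle\bigr),
\]
where $t$ is a new generator of infinite order and the amalgamated subgroup $\langle a \rangle$ is identified with the first factor of $\langle a \rangle \times \langle t \rangle$. By the standard theory of amalgamated free products, $G$ embeds into $H$. By construction, $t$ commutes with $a$ in $H$. The key computation is that $[t,b] \neq 1$ in $H$: writing the commutator as $t \cdot b \cdot t^{-1} \cdot b^{-1}$, this is an alternating word between the two factors whose syllables $t, t^{-1}$ lie outside $\langle a \rangle$ (as $t$ has infinite order modulo $\langle a \rangle$ in the second factor) and whose syllables $b, b^{-1}$ lie outside $\langle a \rangle$ by the hypothesis $b \notin \langle a \rangle$. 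Hence it is a reduced word of length four in the amalgamated free product normal form, and is therefore nontrivial.

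The main obstacle in executing this plan is the verification of the normal-form step, namely that $b \notin \langle a \rangle$ forces $[t,b] \neq 1$ in $H$; this is a direct appeal to the normal form theorem for amalgamated free products and is the only nonformal ingredient. Once this is in place, combining the extension $H \supseteq G$, the element $t \in H$, and the existential closedness of $G$ immediately produces the desired $c \in G$, finishing the proof.
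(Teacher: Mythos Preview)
Your proof is correct. The paper does not actually prove this Fact---it is quoted as an exercise from Hodges---so there is no ``paper's own proof'' to compare against directly. That said, your argument is precisely the group-theoretic template that the paper then transposes to the von Neumann setting in the immediately following proposition: there one takes $b\in (N'\cap M)'\cap M$ with $b\notin N$, forms the amalgamated free product $Q=M*_N(N\overline{\otimes}L(\mathbb{Z}))$, and uses the canonical unitary of $L(\mathbb{Z})$ exactly as you use $t$, together with existential closedness, to reach a contradiction. Your choice of $H=G*_{\langle a\rangle}(\langle a\rangle\times\langle t\rangle)$ and your appeal to the normal form theorem for amalgamated free products are the exact analogs of the paper's $Q$ and of the freeness fact it invokes (that $b\notin N$ and $c\notin N$ force $[b,c]\neq 0$). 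So your approach is not merely correct but is the one the paper implicitly has in mind.
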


As we just pointed out, the na\"ive von Neumann analog of the previous fact is not true.  However, the von Neumann analog does hold with a spectral gap hypothesis:

\begin{prop}\label{ecbicomm}
Suppose that $M$ is an e.c.\ II$_1$ factor and $N$ is a w-spectral gap subalgebra of $M$.  Then $N$ satisfies the bicommutant condition $$(N'\cap M)'\cap M=N.$$
\end{prop}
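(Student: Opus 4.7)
The plan is to argue by contradiction. The inclusion $N \subseteq (N'\cap M)' \cap M$ is immediate, so suppose there is $x \in (N'\cap M)' \cap M$ with $x \notin N$. I will produce a witness in an extension of $M$ that, transported to $M^{\mathcal{U}}$ via the e.c.\ hypothesis and then pulled back into $N' \cap M$ via w-spectral gap, contradicts $x \in (N' \cap M)' \cap M$.

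For the extension, let $Q$ be any nontrivial tracial von Neumann algebra (say $Q = L(\mathbb{F}_2)$), and form the amalgamated free product $\tilde{M} := M *_N (N \overline{\otimes} Q)$, where $N$ sits inside $N \overline{\otimes} Q$ as $N \otimes 1$. Then $\tilde{M}$ is a tracial von Neumann extension of $M$. Fix a non-scalar unitary $q \in Q$ and set $z := 1 \otimes q \in (N \overline{\otimes} Q) \subseteq \tilde{M}$. Since $z$ manifestly commutes with $N \otimes 1 = N$ inside $N\overline{\otimes} Q$, we have $z \in N' \cap \tilde{M}$; and $z \notin N$ because $q$ is non-scalar. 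The freeness property of the amalgamated free product recalled at the end of Section 4 then gives $[x,z] \neq 0$ in $\tilde{M}$, since $x \in M \setminus N$ and $z \in (N \overline{\otimes} Q) \setminus N$.

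Since $M$ is e.c., there is a nonprincipal ultrafilter $\mathcal{U}$ and an embedding $j : \tilde{M} \hookrightarrow M^{\mathcal{U}}$ whose restriction to $M$ is the diagonal embedding. Set $z' := j(z)$. Because $j$ is the identity on $N$ and $z$ commutes with $N$ in $\tilde{M}$, the element $z'$ commutes with the diagonal copy of $N$, i.e., $z' \in N' \cap M^{\mathcal{U}}$; moreover $[x, z'] = j([x,z]) \neq 0$. By Lemma \ref{ultraspecgap}(2), w-spectral gap yields $N' \cap M^{\mathcal{U}} = (N' \cap M)^{\mathcal{U}}$, so $z'$ is represented by a sequence $(z_i)$ with $z_i \in N' \cap M$ on a $\mathcal{U}$-large set. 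Since $\|[x,z']\|_2 = \lim_{\mathcal{U}} \|[x,z_i]\|_2 > 0$, some $z_i \in N' \cap M$ satisfies $[x, z_i] \neq 0$, contradicting $x \in (N'\cap M)' \cap M$.

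The main difficulty is the choice of extension. The naive candidate $\tilde{M} = M *_N M$ with witness $\iota_2(x)$ fails: that element commutes with $N$ only when $x \in N' \cap M$, which we are not given (we know only that $x$ centralizes $N' \cap M$). Amalgamating instead against $N \overline{\otimes} Q$ sidesteps this obstruction cleanly, because the tensor factor $Q$ automatically furnishes elements commuting with $N$ yet lying outside $N$, after which freeness delivers the required noncommutation with $x$.
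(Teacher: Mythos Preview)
Your proof is correct and follows essentially the same route as the paper's: both argue by contradiction, form the amalgamated free product $M *_N (N\overline{\otimes} Q)$ (the paper takes $Q=L(\Z)$, you take $Q=L(\mathbb{F}_2)$, which is immaterial), pick a witness in $Q$ that commutes with $N$ but not with $x$ by freeness, push it into $M^{\mathcal U}$ via the e.c.\ hypothesis, and invoke w-spectral gap to land in $(N'\cap M)^{\mathcal U}$ for the contradiction. Your closing paragraph explaining why $M*_N M$ fails as an extension is a nice addition not present in the paper.
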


\begin{proof}
Suppose, towards a contradiction, that $b\in (N'\cap M)'\cap M$ but $b\notin N$.  Let $Q:=M*_N (N\overline{\otimes}L(\Z))$.  Since $M\subseteq Q$ and $M$ is e.c., there is an embedding $i:Q\to M^\u$ such that $i$ restricts to the diagonal embedding on $M$.  Let $c\in Q$ be the canonical unitary of $L(\mathbb{Z})$.  Then $i(c)\in N'\cap M^\u=(N'\cap M)^\u$, so we can write $i(c)=(c_n)^\bullet$ with each $c_n\in N'\cap M$.  By choice of $b$, we have $[b,c_n]=0$ for all $n$, whence $[i(b),i(c)]=0$ and hence $[b,c]=0$, contradicting the fact that $b\notin N$.
\end{proof}

%

\begin{cor}
Suppose that $M$ is an e.c. II$_1$ factor and $N$ has w-spectral gap in $M$.  Then $N$ is a definable subalgebra of $M$.
\end{cor}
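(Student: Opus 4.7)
The plan is very direct: this corollary is essentially a bookkeeping combination of two results that have already been established just above, so I would not expect to need any new machinery or calculations. The strategy is to identify $N$ with the relative bicommutant $(N'\cap M)'\cap M$ using the hypothesis that $M$ is existentially closed, and then to observe that this bicommutant is already known to be a definable subalgebra whenever $N$ has w-spectral gap in $M$.

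More concretely, I would first invoke Proposition \ref{ecbicomm}, which applies precisely because $M$ is an e.c.\ II$_1$ factor and $N$ has w-spectral gap in $M$; this yields the identification
\[
N = (N'\cap M)'\cap M.
\]
Next I would invoke Corollary \ref{defcomm3}, which (again using only the w-spectral gap hypothesis) tells us that $(N'\cap M)'\cap M$ is a definable subalgebra of $M$, in the sense that its intersection with $M_1$ is a definable subset of $M_1$. Substituting the equality from the first step, $N\cap M_1$ is a definable subset of $M_1$, which is exactly the assertion that $N$ is a definable subalgebra of $M$.

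There is no real obstacle here; the conceptual work has already been done. What matters for the reader is that this corollary packages the preceding theory into a clean conclusion: the e.c.\ hypothesis provides the von Neumann-type bicommutant identity $N = (N'\cap M)'\cap M$ (which fails in general, as noted in the discussion of irreducible subfactors), and once this identity is available, the definability of $N$ is automatic from the two-step passage $N \rightsquigarrow N'\cap M \rightsquigarrow (N'\cap M)'\cap M$, where each step preserves definability by Corollary \ref{defcomm2} applied with the initial input of $\varphi_N$-definability of $N'\cap M_1$ coming from Theorem \ref{defcomm1}.
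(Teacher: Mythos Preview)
Your proposal is correct and is exactly the argument the paper intends: combine Proposition~\ref{ecbicomm} (giving $N=(N'\cap M)'\cap M$) with Corollary~\ref{defcomm3} (giving definability of $(N'\cap M)'\cap M$). The paper states the corollary without proof precisely because it follows immediately from these two results.
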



\begin{cor}
Suppose that $M$ is an e.c.\ II$_1$ factor and $N$ is a property (T) subfactor.\footnote{Refer back to Remark \ref{Tremark} for a discussion on property (T) factors.}  Then $N$ is a definable subalgebra of $M$.
\end{cor}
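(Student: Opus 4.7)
The plan is to reduce the statement to the immediately preceding corollary (which concludes definability from the w-spectral gap hypothesis on $N$ in an e.c.\ II$_1$ factor $M$). What needs to be shown is that a property (T) subfactor $N$ of $M$ automatically has w-spectral gap in $M$.

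First, I would unpack the definition of property (T) for a II$_1$ factor $N$ along the lines suggested by Remark \ref{Tremark}: the analog of a Kazhdan pair should say that there exist finitely many unitaries $u_1,\dots,u_n \in \U(N)$ and $\delta > 0$ such that for every Hilbert $N$-$N$ bimodule $\H$ and every $\xi \in \H$, if $\max_i \|u_i \xi - \xi u_i\| \leq \delta \|\xi\|$, then there is a central vector $\eta \in \H$ with $\|\xi - \eta\| \leq \epsilon \|\xi\|$. This is precisely the bimodular analog of Proposition \ref{defT} and is the standard Connes--Jones definition of property (T) for a II$_1$ factor.

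Second, applying this to the standard $N$-$N$ bimodule $\H = L^2(M)$ (with the natural left and right $N$-actions coming from $N \subseteq M$), I obtain that the $\U(N)$-representation $u \mapsto uxu^*$ on $L^2(M)$ has spectral gap: for any $\epsilon > 0$ there are $u_1,\dots,u_n \in \U(N)$ and $\delta > 0$ such that for all $x \in L^2(M)$, if $\max_i \|[x,u_i]\|_2 \leq \delta \|x\|_2$ then $\|x - \mathbb{E}_{N' \cap M}(x)\|_2 \leq \epsilon \|x\|_2$ (this is essentially the calculation done in the Example following the definition of spectral gap subalgebra, specialized to the property (T) setting). In particular, $N$ has spectral gap in $M$, and by restricting the quantifier from $L^2(M)$ to the unit ball $M_1$ we get that $N$ also has w-spectral gap in $M$.

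Finally, having established that $N$ is a w-spectral gap subalgebra of the e.c.\ II$_1$ factor $M$, the previous corollary immediately yields that $N$ is a definable subalgebra of $M$. The main obstacle here is really just step one: one needs the bimodular reformulation of property (T) to be available in order to apply it with $\H = L^2(M)$ rather than only with $\H = \ell^2\Gamma$-type representations, and the cleanest route is to quote the standard Connes--Jones characterization (or, as suggested in Remark \ref{Tremark}, the forthcoming joint work with Hart and Sinclair). Once that characterization is in hand, the rest is a direct application of the machinery already developed.
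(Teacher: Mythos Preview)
Your proposal is correct and follows the same route as the paper: the corollary is stated without proof because it is immediate from the previous corollary together with Remark~\ref{Tremark}, which asserts that a property~(T) II$_1$ factor has spectral gap (hence w-spectral gap) in any extension. Your unpacking of the Connes--Jones bimodular characterization is exactly the content behind that remark, so your argument is a fleshed-out version of the paper's implicit one-line deduction.
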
 

The above discussion can be used to give a new proof of the following, fact previously established by the author, Bradd Hart, and Thomas Sinclair in \cite{nomodcomp}.

\begin{cor}
The theory of II$_1$ factors does not have a model companion.
\end{cor}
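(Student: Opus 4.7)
The plan is to argue by contradiction. Suppose, for a contradiction, that the theory of II$_1$ factors has a model companion $T^*$. Then the class of e.c.\ II$_1$ factors coincides with $\Mod(T^*)$, and is in particular closed under ultrapowers. The key idea is to exploit this closure together with Proposition~\ref{ecbicomm} to produce an impossible bicommutant identity.

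I would first fix a countable ICC group $\Gamma$ with property (T) (say $\Gamma=\SL_3(\Z)$) and set $N:=L(\Gamma)$.  By the example following the definition of spectral gap for subalgebras in Section~5, such an $N$ has spectral gap---and in particular w-spectral gap---in every tracial von Neumann algebra in which it sits.  Using standard model-theoretic machinery I would embed $N$ into some e.c.\ II$_1$ factor $M$ and fix a non-principal ultrafilter $\u$ on $\n$; the model-companion hypothesis then forces $M^{\u}$ to again be an e.c.\ II$_1$ factor, and $N\subseteq M^{\u}$ still has w-spectral gap.  Applying Proposition~\ref{ecbicomm} to $N\subseteq M^{\u}$ yields
$$(N'\cap M^{\u})'\cap M^{\u}=N.$$

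The contradiction would then come from showing that $N^{\u}$ is squeezed into that relative bicommutant while properly containing $N$.  For the inclusion, Lemma~\ref{ultraspecgap}(2) identifies $N'\cap M^{\u}$ with $(N'\cap M)^{\u}$, so for any $a=(a_n)^{\bullet}\in N^{\u}$ (with $a_n\in N$) and any representative $c=(c_n)^{\bullet}\in(N'\cap M)^{\u}$ (with $c_n\in N'\cap M$), one has $[a_n,c_n]=0$ for every $n$, hence $[a,c]=0$ in $M^{\u}$.  For the strict containment, I would simply note that a separable II$_1$ factor is never $\aleph_1$-saturated, so $N\subsetneq N^{\u}$; this finishes the contradiction.

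The only genuinely nontrivial ingredient is Proposition~\ref{ecbicomm}, which is already in hand, so most of the work has been done.  The step I would expect to be the main obstacle---really, the only one requiring care---is justifying that the presumed model companion truly forces $M^{\u}$ to be e.c.: this boils down to the elementary character of $\Mod(T^*)$ together with the well-behaved universal part of the theory of II$_1$ factors (essentially $T_{\vNa}$), and probably deserves an explicit sentence or two of justification in the final write-up.
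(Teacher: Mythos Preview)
Your argument is correct and follows the same strategy as the paper's: both exploit Proposition~\ref{ecbicomm} to show that an elementary extension of an e.c.\ factor $M\supseteq N=L(\Gamma)$ violates the bicommutant identity for $N$, with you choosing the ultrapower $M^{\u}$ and computing $(N'\cap M^{\u})'\cap M^{\u}\supseteq N^{\u}$ via Lemma~\ref{ultraspecgap}(2), while the paper instead packages the same idea through the $\psi_N$-definability of $N$ and compactness. One small point to tidy: Proposition~\ref{ecbicomm} is stated under the standing hypothesis that $M$ is separable, so when you invoke it for the nonseparable $M^{\u}$ you should either remark that its proof uses no separability, or first pass to a separable $M'\prec M^{\u}$ containing $N$ and a fixed $a\in N^{\u}\setminus N$ (your inclusion argument still gives $a\in (N'\cap M')'\cap M'$).
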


\begin{proof}
Let $\Gamma$ be an infinite, ICC group with property (T) (e.g. $\Gamma:=\operatorname{SL}_3(\mathbb{Z})$) and set $N:=L(\Gamma)$.  Let $M$ be an e.c.\ II$_1$ factor containing $N$.  We seek to find an elementary extension $\tilde{M}$ of $M$ that is not e.c.  \

Set $\psi_N(x):=\sup_{y\in N'\cap M}\|[x,y]\|_2$, so $N=(N'\cap M)'\cap M$ is $\psi_N$-definable.  Since $N$ is infinite-dimensional, by compactness there is an elementary extension $\tilde{M}$ of $M$ such that $Z(\psi_N^{\tilde{M}})$ is a proper extension of $N$.  Now note that $\psi_N^{\tilde{M}}$ defines $(N'\cap \tilde{M})'\cap \tilde{M}$.  It follows that $(N'\cap \tilde{M})'\cap\tilde{M}\not=N$, whence $\tilde{M}$ is not e.c.
\end{proof}

It is worth pointing out that the previous proof uses far less ``technology'' than the original proof from \cite{nomodcomp}.  Indeed, the above proof simply uses the existence of infinite ICC groups with property (T) while the proof from \cite{nomodcomp} uses some deep results of Bekka \cite{Bekka} and Brown \cite{Brown}.

%

\subsection{Open questions}  We end this section by mentioning some open problems where spectral gap might play a role:

\begin{question}
Are any two e.c. II$_1$ factors elementarily equivalent?
\end{question}

Here is a quick explanation for why the ideas presented in this section might be relevant to the previous question.  For simplicity, suppose that $\R$ is e.c.  (As mentioned in \cite{FGHS}, $\R$ is e.c. if and only if the famous \emph{Connes Embedding Problem} has a positive solution.)  Let $N$ be a property $(T)$ factor (so, in particular, does not have property Gamma).  Let $M$ be an e.c. factor containing $N$.  The hope would be to show that this $M$ could not be elementarily equivalent to $\R$.  Indeed, $N$ is a definable subfactor of $M$ and the idea would be to see if one could use the assumption that $M\equiv \R$ to show that $\R$ must also have a definable, non-Gamma subfactor, yielding a contradiction (as all subfactors of $\R$ are hyperfinite).
%

If the strategy described in the previous paragraph worked, it could probably also be used to yield a positive answer to the following:

\begin{question}
If $M$ is a strongly McDuff II$_1$ factor, is it true that $M\not\equiv \R$?
\end{question}


A related question:

\begin{question}
Can an e.c. II$_1$ factor ever be strongly McDuff?
\end{question}

Temporarily, call a non-Gamma factor $N$ \textbf{bc-good} if it contains a w-spectral gap subfactor $\tilde{N}$ such that $(\tilde{N}'\cap N)'\cap N\not=\tilde{N}$.  Corollary \ref{defcomm3} immediately implies:

\begin{lemma}
Suppose that $N$ is a non-Gamma II$_1$ factor with a w-spectral gap subfactor $\tilde{N}$ that is not definable.  Then $N$ is bc-good.
\end{lemma}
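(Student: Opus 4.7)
The plan is to invoke Corollary \ref{defcomm3} directly and finish by contradiction. That corollary asserts that whenever a subalgebra has w-spectral gap in its enveloping II$_1$ factor, the relative bicommutant is automatically a definable subalgebra. Applying this with the enveloping factor being $N$ itself and the subalgebra being $\tilde{N}$, the hypothesis that $\tilde{N}$ has w-spectral gap in $N$ gives us, at no cost, that $(\tilde{N}'\cap N)'\cap N$ is a definable subalgebra of $N$.

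Now I would argue by contradiction: suppose $(\tilde{N}'\cap N)'\cap N = \tilde{N}$. Then the conclusion of the previous paragraph says precisely that $\tilde{N}$ is a definable subalgebra of $N$, contradicting the standing assumption that $\tilde{N}$ is not definable. Hence $(\tilde{N}'\cap N)'\cap N \neq \tilde{N}$, which together with the non-Gamma hypothesis on $N$ is exactly the definition of $N$ being bc-good. There is no genuine obstacle here; the inclusion $\tilde{N}\subseteq (\tilde{N}'\cap N)'\cap N$ is automatic, so the whole content of the lemma is just recording the contrapositive of Corollary \ref{defcomm3} in the special case when the ambient algebra plays both roles.
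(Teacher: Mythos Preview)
Your argument is correct and is exactly what the paper has in mind: the paper simply states that the lemma ``immediately'' follows from Corollary \ref{defcomm3}, and you have spelled out that immediate contrapositive argument. There is nothing to add.
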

%

Here is the relevance of bc-good factors in connection with the last question:
\begin{cor}
If $N$ is a bc-good non-Gamma factor, then $N\overline{\otimes} \R$ is \emph{not} e.c.
\end{cor}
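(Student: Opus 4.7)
The plan is to assume for contradiction that $M := N \overline{\otimes} \R$ is existentially closed and derive a contradiction from bc-goodness by applying Proposition \ref{ecbicomm} to $\tilde{N}$, viewed as a subalgebra of $M$ via the first tensor factor.

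Granted for the moment that $\tilde{N}$ has w-spectral gap in $M$ (the main step), Proposition \ref{ecbicomm} yields $(\tilde{N}' \cap M)' \cap M = \tilde{N}$. I would compute the left-hand side using the standard tensor commutant formula (which uses that $\R$ is a factor): $\tilde{N}' \cap M = (\tilde{N}' \cap N) \overline{\otimes} \R$, and then any $x \in M$ commuting with this subalgebra must commute with $1 \otimes \R$, forcing $x \in N \otimes 1$, and also with $(\tilde{N}' \cap N) \otimes 1$, cutting $x$ down to $(\tilde{N}' \cap N)' \cap N$. Hence $(\tilde{N}' \cap M)' \cap M = (\tilde{N}' \cap N)' \cap N$, which strictly contains $\tilde{N}$ by bc-goodness, the desired contradiction.

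The substantive work is verifying that $\tilde{N}$ has w-spectral gap in $M$, which by Lemma \ref{ultraspecgap}(2) amounts to $\tilde{N}' \cap M^\u \subseteq (\tilde{N}' \cap M)^\u$. Given $x \in \tilde{N}' \cap M^\u$ with $\|x\| \leq 1$, I would fix an orthonormal basis $\{s_i\}$ of $L^2(\R)$ with $s_0 = 1$, chosen so each $s_i$ lies in $\R$ (using matrix units from the hyperfinite presentation $\R = \overline{\bigcup_k M_{2^k}(\mathbb{C})}$), and form the Fourier coefficients $\xi_i := \mathbb{E}_N((1 \otimes s_i^*) x) \in N^\u$, where $\mathbb{E}_N$ is the ultrapower of the trace-preserving conditional expectation $M \to N$. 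Since $1 \otimes s_i^*$ commutes with $\tilde{N} \otimes 1$ and $\mathbb{E}_N$ is $\tilde{N}$-bimodular, each $\xi_i$ commutes with $\tilde{N}$ in $N^\u$; the w-spectral gap of $\tilde{N}$ in $N$ hypothesized by bc-goodness then forces $\xi_i \in (\tilde{N}' \cap N)^\u$ via Lemma \ref{ultraspecgap}(2) applied internally to $N$. Reassembling the $\xi_i$ via the basis expansion should place $x$ into $((\tilde{N}' \cap N) \overline{\otimes} \R)^\u = (\tilde{N}' \cap M)^\u$. The main technical subtlety will be making this reassembly rigorous in the ultrapower: the $\xi_i$'s have controlled $\|\cdot\|_2$-norms but lack uniform operator-norm control on their representing sequences (since the matrix-unit basis elements have operator norms growing with level), so one cannot naively concatenate. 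I would address this by truncating to finite index sets $F \subseteq I$, using $\|x\| \leq 1$ to cap operator norms of finite-sum approximations, and diagonalizing across an exhaustion of $I$ to extract a bounded representing sequence in $(\tilde{N}' \cap N) \overline{\otimes} \R$ for $x$ inside $M^\u$.
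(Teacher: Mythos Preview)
Your overall strategy matches the paper's exactly: establish that $\tilde{N}$ has w-spectral gap in $M = N \overline{\otimes} \R$, compute the relative bicommutant as $(\tilde{N}' \cap N)' \cap N$, and invoke Proposition \ref{ecbicomm}. The bicommutant computation you give is correct and is precisely what the paper asserts (though the paper states it without justification).

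The difference lies entirely in the w-spectral gap step. The paper simply cites \cite[Corollary in the Appendix]{ioana} (the Ioana--Vaes result) for the fact that w-spectral gap of $\tilde{N}$ in $N$ passes to $N \overline{\otimes} \R$. Your attempted direct proof via Fourier expansion in $M^\u$ has a genuine gap, and the difficulty is not the one you diagnosed. The real problem is that the map $x \mapsto (\xi_i)_i$, sending $x \in M^\u$ to its sequence of coefficients $\xi_i = \mathbb{E}_{N^\u}((1 \otimes s_i^*)x)$, is \emph{not injective}: since $(N \overline{\otimes} \R)^\u \neq N^\u \overline{\otimes} \R$, the diagonal copy of $\{s_i\}$ does not span $L^2(M^\u)$ over $L^2(N^\u)$. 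Concretely, if $x = (1 \otimes s_n)^\bullet$ with $\u$ nonprincipal, then every $\xi_i = 0$ while $\|x\|_2 = 1$. So knowing that each $\xi_i$ lies in $(\tilde{N}' \cap N)^\u$ says nothing about $x$ itself; there is no reassembly to perform. Equivalently, writing $x=(x_n)^\bullet$ and $x_{n,i}$ for the coefficients of $x_n$, your argument establishes $\lim_\u \|x_{n,i} - E_P(x_{n,i})\|_2 = 0$ for each fixed $i$ (where $P=\tilde{N}'\cap N$), but what is needed is $\lim_\u \sum_i \|x_{n,i} - E_P(x_{n,i})\|_2^2 = 0$, and the ultralimit does not commute with the infinite sum. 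Your truncation-and-diagonalization scheme does not bridge this: the tail $\sum_{i \notin F} \|x_{n,i}\|_2^2$ need not be small uniformly in $n$, so the finite-level approximants $\sum_{i\in F}\xi_i\otimes s_i$ need not converge to $x$ in $M^\u$ at all. The Ioana--Vaes argument handles this by a different (and genuinely nontrivial) route; you should either cite it as the paper does or reproduce their proof.
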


\begin{proof}
Let $\tilde{N}$ be as in the definition of bc-good.  Then $\tilde{N}$ has w-spectral gap in $N\overline{\otimes} \R$ (see \cite[Corollary in the Appendix]{ioana}) but $$(\tilde{N}'\cap(N\overline{\otimes} \R))'\cap (N\overline{\otimes} \R)=(\tilde{N}'\cap N)'\cap N\not=\tilde{N},$$ whence, by Proposition \ref{ecbicomm}, $N\overline{\otimes} \R$ is not e.c.
\end{proof}

Consequently, it is of interest to investigate whether or not all non-Gamma factors are bc-good.
%

\section{Continuum many theories of II$_1$ factors}

\subsection{The history and the main theorem} 

The progress in exhibiting many nonisomorphic separable II$_1$ factors was very slow.  As mentioned earlier, the first example of two nonisomorphic separable II$_1$ factors was given by Murray and von Neumann, where they used property Gamma to distinguish $\R$ from $L(\mathbb{F}_2)$.  The third isomorphim class was given by Dixmier and Lance, where they found an example of a separable II$_1$ factor that had property Gamma but was not McDuff.

Slowly, more isomorphism classes were discovered but it remained until McDuff's seminal works in \cite{MD1} and \cite{MD2} to exhibit infinitely many isomorphism classes.  The latter work exhibited a family $(M_{\balpha})_{\balpha\in 2^\omega}$ of pairwise nonisomorphic separable II$_1$ factors.  We will explain the construction of this family below.
 
The progress in exhibiting non-elementarily equivalent II$_1$ factors was equally as slow.  As mentioned earlier, property Gamma and the McDuff property were shown to be elementary properties in \cite{FHS3}, so the first three pairwise nonisomorphic separable II$_1$ factors are also pairwise non-elementarily equivalent.  A fourth class was discovered by the current author and Bradd Hart in 2015 (but was not published until \cite{braddisaac}) and it remained open to show that there were infinitely many elementary equivalence classes until the appearance of \cite{BCI}, where the following was shown:

\begin{fact}
Let $(M_{\balpha})_{\balpha\in 2^\omega}$ be McDuff's family of pairwise nonisomorphic separable II$_1$ factors.  Then for any $\balpha\not=\bbeta$ and ultrafilters $\u$ and $\mathcal{V}$ on any sets, we have that $M_{\balpha}^{\u}\not\cong M_{\bbeta}^{\mathcal{V}}$.
\end{fact}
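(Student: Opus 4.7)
The plan is to turn McDuff's construction into a sequence of axiomatizable invariants using the spectral gap framework developed in Section 5. Since each separable II$_1$ factor elementarily embeds into any of its ultrapowers via the diagonal map, an isomorphism $M_{\balpha}^{\u} \cong M_{\bbeta}^{\mathcal{V}}$ forces $M_{\balpha} \equiv M_{\bbeta}$, so it suffices to produce first-order invariants of the theory of $M_{\balpha}$ that distinguish different binary sequences.

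First, I would unpack McDuff's inductive construction. The factors $M_{\balpha}$ arise by iterating two operations on II$_1$ factors, where the choice at stage $n$ is prescribed by $\alpha_n$: one operation (e.g.\ tensoring with $\R$) forces the resulting factor to be McDuff at that level, while the other (a free-product construction with a non-Gamma factor) forces non-McDuffness while preserving enough rigidity in the form of spectral gap. From this construction one extracts a decreasing sequence of distinguished subfactors $N_0 \supseteq N_1 \supseteq \cdots \supseteq N_n \supseteq \cdots$ inside $M_{\balpha}$ with the property that whether $N_n' \cap M_{\balpha}^{\u}$ is McDuff (equivalently, has non-abelian relative commutant in $M_{\balpha}^{\u}$) records the bit $\alpha_n$.

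Second, I would use the machinery of Section 5 to render these invariants axiomatizable. The non-Gamma step in McDuff's construction ensures that $N_n$ has w-spectral gap in $M_{\balpha}$, leaning on Fact \ref{connesfact} together with the ``spectral gap in tensor extensions'' result of Ioana cited in the paper. By Theorem \ref{defcomm1} and Corollary \ref{defcomm2}, the relative commutant $N_n' \cap M_{\balpha}$ is then a definable subalgebra of $M_{\balpha}$, whose structure is controlled by first-order properties of $M_{\balpha}$. Combined with the axiomatizability of the McDuff property established in \cite{FHS3} and the fact that definable subalgebras of elementarily equivalent structures are elementarily equivalent, this turns the bit $\alpha_n$ into a first-order invariant of $\operatorname{Th}(M_{\balpha})$.

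Finally, I would conclude by induction on the least coordinate $n$ at which $\balpha$ and $\bbeta$ disagree: up to stage $n-1$ the invariants line up, and at stage $n$ the axiomatizable invariant forces one of the two iterated commutants to be McDuff and the other not, contradicting $M_{\balpha} \equiv M_{\bbeta}$. The main obstacle will be executing the second step, namely propagating spectral gap through every level of McDuff's inductive construction so that the distinguished subfactor $N_n$ can be recognized by a formula \emph{intrinsically}, not merely via the explicit presentation of $M_{\balpha}$. This is exactly where the perspective of this paper pays off: without the spectral gap/definability bridge, one sees the relative commutants only through the ambient embedding of $M_{\balpha}$ into its ultrapower, but the definability supplied by w-spectral gap is what lets each relevant subalgebra be pinned down by the first-order theory and thereby carried across elementary equivalence.
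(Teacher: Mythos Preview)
The paper does not itself prove this Fact; it is quoted from \cite{BCI}. What Section 6 does is outline the approach of \cite{braddisaachenry}, which produces explicit sentences $\theta_{m,n}$ distinguishing the $M_{\balpha}$ and thereby reproves the Fact via exactly your reduction to $M_{\balpha}\not\equiv M_{\bbeta}$. Comparing your proposal to that outline, there is a genuine gap.

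Your central move is to fix distinguished subfactors $N_n\subseteq M_{\balpha}$ coming from the construction, observe they have w-spectral gap, and then invoke Theorem \ref{defcomm1} and Corollary \ref{defcomm2} to make $N_n'\cap M_{\balpha}$ definable and hence its first-order properties invariants of $\operatorname{Th}(M_{\balpha})$. But the definability you obtain is definability \emph{over parameters from $N_n$}. Those parameters live in $M_{\balpha}$ and nothing in elementary equivalence transports them to $M_{\bbeta}$; your assertion that ``definable subalgebras of elementarily equivalent structures are elementarily equivalent'' is only valid for $T$-definable (parameter-free) sets, and your $N_n$ are visibly presentation-dependent, not determined by the theory. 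You flag exactly this as the main obstacle in your final paragraph, but you do not resolve it, and it cannot be resolved along the lines you sketch.

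The resolution in \cite{braddisaachenry}, as described in the paper, is to abandon fixed $N_n$ and instead \emph{quantify} over all pairs of good unitaries: pairs $(u_1,u_2)$ for which $C(u_1,u_2)$ is $\sqrt{\chi}$-definable with a prescribed modulus. Being a good pair is itself a zeroset condition in saturated models, so one can form parameter-free sentences of the shape $\inf_{\vec u}\sup_a\inf_{\vec v}\tilde\theta(\vec u,\vec v)$ relativized to $C(\vec v)'\cap C(\vec u)$. This is precisely what replaces your ``intrinsic recognition of $N_n$,'' and it is the step your proposal is missing.

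Two smaller points. First, McDuff's construction is not ``tensor with $\R$'' versus ``free product with a non-Gamma factor'' at the algebra level; it is the group-theoretic iteration of the operations $T_0$ and $T_1$ spelled out just before Theorem \ref{mainBCIagain}, and the spectral gap input at the base is non-amenability of $\Gamma$ (spectral gap of $\lambda_\Gamma$), not Fact \ref{connesfact} or the Ioana appendix. Second, the bit $\alpha_n$ is not read off by McDuff-ness of a relative commutant; the distinguishing base sentence $\theta_{m,0}$ detects whether the $S_\infty$-action in $T_1$ is present, i.e.\ whether one can uniformly conjugate tuples to commute with other tuples inside the commutant.
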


Using either the continuous version of the Keisler-Shelah theorem or a Continuum Hypothesis/absoluteness argument, the following model-theoretic corollary is immediate:

\begin{cor}\label{eecor}
Under the assumptions of the previous fact, we have that $M_{\balpha}\not\equiv M_{\bbeta}$.
\end{cor}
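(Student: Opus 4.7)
The plan is to assume, towards a contradiction, that $M_{\balpha}\equiv M_{\bbeta}$ for some $\balpha\ne\bbeta$, and to produce isomorphic ultrapowers of $M_{\balpha}$ and $M_{\bbeta}$, directly contradicting the preceding fact. The excerpt already advertises two routes, and both are short.

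The first route is the continuous Keisler--Shelah theorem, which states that two $L$-structures $\mathbf{M}$ and $\mathbf{N}$ are elementarily equivalent if and only if there exist ultrafilters $\u$ and $\mathcal{V}$ (on possibly distinct index sets) such that $\mathbf{M}^{\u}\cong \mathbf{N}^{\mathcal{V}}$. Assuming $M_{\balpha}\equiv M_{\bbeta}$, this theorem immediately produces ultrafilters with $M_{\balpha}^{\u}\cong M_{\bbeta}^{\mathcal{V}}$, contradicting the cited fact. This route is essentially a one-line invocation, and the only bookkeeping is making sure that the version of Keisler--Shelah being applied is the continuous one (e.g. as found in \cite{BBHU}), which allows arbitrary index sets.

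The second route, the Continuum Hypothesis plus absoluteness argument, works as follows. Both $M_{\balpha}$ and $M_{\bbeta}$ are separable $L_{\vNa}$-structures, so for any nonprincipal ultrafilter $\u$ on $\omega$, each of $M_{\balpha}^{\u}$ and $M_{\bbeta}^{\u}$ is $\aleph_1$-saturated. Under CH, both have density character $\aleph_1$. If $M_{\balpha}\equiv M_{\bbeta}$, then $M_{\balpha}^{\u}\equiv M_{\bbeta}^{\u}$, and since saturated elementarily equivalent structures of the same density character are isomorphic, we obtain $M_{\balpha}^{\u}\cong M_{\bbeta}^{\u}$, again contradicting the fact. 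To remove CH, observe that ``$M_{\balpha}\equiv M_{\bbeta}$'' is equivalent to an equality of the (countable) complete theories $\Th(M_{\balpha})=\Th(M_{\bbeta})$. Each such theory is computed by evaluating countably many sentences on a fixed explicitly described separable structure, hence is absolute between transitive models of ZFC. So if the conclusion holds after forcing CH, it holds in the ground model, yielding $M_{\balpha}\not\equiv M_{\bbeta}$ in ZFC.

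The only subtlety in either approach is a foundational one, namely ensuring that the quoted black box (continuous Keisler--Shelah, or the saturation+absoluteness package) is the correct continuous analogue. Neither requires any additional input from the structure theory of II$_1$ factors; all the content is already packaged in the preceding fact about ultrapowers, and the corollary is a purely formal transfer from $\not\cong$ to $\not\equiv$.
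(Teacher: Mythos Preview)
Your proposal is correct and matches the paper's own treatment exactly: the paper simply asserts that the corollary is immediate from either the continuous Keisler--Shelah theorem or a CH/absoluteness argument, and you have spelled out both routes accurately. There is nothing to add.
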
 

The previous corollary notwithstanding, it at first proved difficult to find explicit sentences distinguishing the McDuff examples.  In \cite{braddisaac}, with Bradd Hart we used Ehrenfeucht-Fra\"isse games together with a careful reading of \cite{BCI} to at least give an upper bound to the quantifier-complexity of sentences distinguishing the McDuff examples.  Refining the ideas in \cite{braddisaac}, together with Hart and Henry Towsner, we were finally able to provide explicit sentences distinguishing the McDuff examples in \cite{braddisaachenry}.  It is the goal of this section to give a rough overview of the ideas involved in this latter work and to highlight the role of spectral gap and definability.  

Before stating the main theorem, let us introduce some notation.  Let $\Gamma$ be a countable group.  For $i\geq 1$, let $\Gamma_i$ denote an isomorphic copy of $\Gamma$ and let $\Lambda_i$ denote an isomorphic copy of $\mathbb{Z}$.  Let $\tilde{\Gamma}:=\bigoplus_{i\geq 1}\Gamma_i$.  If $S_\infty$ denotes the group of permutations of $\n$ with finite support, then there is a natural action of $S_\infty$ on $\tilde{\Gamma}$ (given by permutation of indices), whence we may consider the semidirect product $\tilde{\Gamma}\rtimes S_\infty$.  Given these conventions, we can now define two new groups:

$$T_0(\Gamma):=\langle \tilde{\Gamma}, (\Lambda_i)_{i\geq 1} \ | \ [\Gamma_i,\Lambda_j]=0 \text{ for }i\geq j\rangle$$ and

$$T_1(\Gamma):=\langle \tilde{\Gamma}\rtimes S_\infty, (\Lambda_i)_{i\geq 1} \ | \ [\Gamma_i,\Lambda_j]=0 \text{ for }i\geq j\rangle.$$

Note that if $\Delta$ is a subgroup of $\Gamma$ and $\alpha\in \{0,1\}$, then $T_\alpha(\Delta)$ is a subgroup of $T_\alpha(\Gamma)$.  Given a sequence $\balpha\in 2^{\leq \omega}$, we define a group $K_{\balpha}(\Gamma)$ as follows:
\begin{enumerate}
\item $K_{\balpha}(\Gamma):=\Gamma$ if $\balpha=\emptyset$;
\item $K_{\balpha}(\Gamma):=(T_{\alpha_0}\circ T_{\alpha_1}\circ \cdots T_{\alpha_{n-1}})(\Gamma)$ if $\balpha\in 2^n$;
\item $K_{\balpha}$ is the inductive limit of $(K_{\balpha|n})_n$ if $\balpha\in 2^\omega$.
\end{enumerate}

We then set $M_{\balpha}(\Gamma):=L(T_{\balpha}(\Gamma))$; if $\balpha=(0)$ or $(1)$, we simply write $M_0(\Gamma)$ or $M_1(\Gamma)$.  We also set $M_{\balpha}:=M_{\balpha}(\mathbb{F}_2)$; these are the McDuff examples referred to above.

Here is the main theorem from \cite{braddisaachenry}:

\begin{thm}\label{mainBCIagain}
For each nonamenable ICC group $\Gamma$, there is an integer $m(\Gamma)$ and a sequence $(c_n(\Gamma))$ of positive real numbers such that, for any $n,t\in \n$ with $t\geq 1$ and any $\balpha\in 2^n$, we have:
\[
\begin{array}{lr}
\theta_{m,n}^{L(T_{\balpha}(\Gamma))^{\overline{\otimes} t}}=0 \text{ for all }m\geq 1 & \text{ if }\balpha(n-1)=1;\\
\theta_{m(\Gamma),n}^{L(T_{\balpha}(\Gamma))^{\overline{\otimes} t}}\geq c_n(\Gamma) &\text{ if }\balpha(n-1)=0.
\end{array}
\]
\end{thm}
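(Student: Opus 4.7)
The plan is to define the sentences $\theta_{m,n}$ as infima over $m$-tuples of unitaries of a quantity that tracks nested approximate commutation through $n$ layers of the iterated construction $T_{\balpha}(\Gamma)$. Recursively, let $\psi_0(x; \vec u) := \|x\|_2^2 - |\tr(x)|^2$ (measuring distance of $x$ from the scalars), and then for $k\geq 1$ let $\psi_k(x;\vec u_1,\dots,\vec u_k)$ be an expression of the form $\inf_{y} \bigl(\sum_i \|[y,u_{k,i}]\|_2 + (\text{a term forcing }\psi_{k-1}(y;\vec u_1,\dots,\vec u_{k-1})\text{ bounded away from }0)\bigr)$; iterating, set $\theta_{m,n}$ to be $\inf_{\vec u_1,\dots,\vec u_n\in \U(M)^m}\psi_n$. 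In effect, $\theta_{m,n}=0$ says that we can find at each of $n$ layers witnesses that both commute with the previous layer's witnesses \emph{and} produce a non-scalar element that almost commutes with the next layer. The integer $m(\Gamma)$ will be the size of a generating set for $\Gamma$ that carries the Kazhdan-like almost-invariance needed from Section 3.

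For the case $\balpha(n-1)=1$, the $S_\infty$-action in $T_1(\cdot)$ provides the needed witnesses for free. Concretely, given any finite family of unitaries from the inner factor $L(K_{\balpha|(n-1)}(\Gamma))^{\overline{\otimes}t}$, we use the fact that $S_\infty$ is locally finite (hence amenable) to produce, for each finite subset of inner unitaries, permutations $\sigma\in S_\infty$ of large support that, combined with long products in the $\Lambda_i$, yield unitaries in the outer layer almost commuting with the inner unitaries in $\|\cdot\|_2$ while remaining non-scalar. An averaging over a F\o lner sequence in $S_\infty$ drives $\theta_{m,n}$ to $0$; the tensor power $t$ is immaterial here since $S_\infty$ acts diagonally only on the first tensorand's layer and can always be refined.

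For the case $\balpha(n-1)=0$, one exploits spectral gap iteratively. The crucial input is that $\Gamma$ is nonamenable and ICC, so by Example \ref{amenablespec} the left-regular representation has spectral gap, and this lifts (using the amalgamated free-product structure underlying $T_0$ and Fact \ref{connesfact}) to a spectral gap for the subalgebra generated by the outermost $\Gamma_i$'s inside $L(T_{\balpha}(\Gamma))$. By Corollary \ref{specgapdef} and Proposition \ref{defT}, this spectral gap yields a quantitative Kazhdan-pair bound: any element almost commuting with the $m(\Gamma)$ generators must be close to its projection onto the relative commutant. Unwinding the recursive definition of $\psi_n$, this produces a lower bound $c_n(\Gamma)$, depending only on the Kazhdan constants obtained at each iteration. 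The tensor power is absorbed thanks to the ``moreover'' clause of Fact \ref{connesfact}: spectral gap of a non-Gamma factor in itself passes to $M\overline{\otimes}S$ for \emph{any} tracial $S$, so the bound $c_n(\Gamma)$ is uniform in $t$.

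The main obstacle, and the reason the constants $m(\Gamma)$ and $c_n(\Gamma)$ must be set up carefully, is keeping the spectral-gap constants from degrading as one descends through the $n$ nested layers. At each layer we invoke a spectral-gap-to-definability conversion, and the ``$\delta$-$\epsilon$'' relationship coming from Proposition \ref{defT} composes with itself $n$ times; controlling this composition \emph{independently of $t$} requires that at each stage we apply Fact \ref{connesfact} to a genuinely non-Gamma subfactor, which is exactly why the hypothesis on $\Gamma$ is nonamenable ICC rather than merely nonamenable. Once this bookkeeping is in place, the two cases assemble into the dichotomy claimed in the statement.
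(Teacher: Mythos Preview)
Your proposal has a genuine gap: it is missing the central mechanism that makes the inductive definition of $\theta_{m,n}$ work, namely \emph{good unitaries} and the relativization they enable.  In the paper, a pair $(u_1,u_2)$ of unitaries is called good if $d(a,C(u_1,u_2))\leq \sqrt{\chi(a,u_1,u_2)}$; this is precisely a uniform spectral-gap/definability condition on the commutant $C(u_1,u_2)$.  Because this makes $C(\vec u)$ (and then $C(\vec v)'\cap C(\vec u)$ when $\vec v$ is also good and $C(\vec v)\subseteq C(\vec u)$) \emph{definable with a fixed modulus}, every prenex sentence $\theta$ admits a formula $\tilde{\theta}(\vec U,\vec V)$ with $\tilde{\theta}(\vec u,\vec v)^M=\theta^{C(\vec v)'\cap C(\vec u)}$.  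The sentence $\theta_{m,n}$ is then built as $\inf_{\vec U}\max(\varphi_{\good}(\vec U),\sup_A\inf_{\vec V}\max(\varphi_{\good}(\vec V),\psi(A,\vec U,\vec V),\tilde{\theta}_{m,n-1}(\vec U,\vec V)))$.  Your recursive $\psi_k$ never enforces that the witnessing unitaries are good, so you have no control over $C(\vec u)$ as a definable set, and hence no way to turn ``$\theta_{m,n-1}$ holds in the nested commutant'' into a genuine first-order formula.  Without this, the recursion does not produce sentences at all.

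There is also a structural ingredient you do not mention: the induction is driven by the fact (from \cite{BCI}) that in a generalized McDuff ultraproduct for $(\Gamma,\balpha)$, for any good $\vec u$ and countable $\vec a$ there exist good $\vec v>(\vec a,\vec u)$ with $C(\vec v)'\cap C(\vec u)$ again generalized McDuff for $(\Gamma,\balpha^\#)$.  This is what peels off one digit of $\balpha$ and is how both cases proceed; your F{\o}lner-averaging story for the $\balpha(n-1)=1$ case and your ``compose Kazhdan constants $n$ times'' story for the $\balpha(n-1)=0$ case do not engage with this.  In particular, the lower bound in the paper is obtained not by tracking constants through $n$ layers but by contradiction: if $\theta_{m,n}$ were arbitrarily small along some $t_n$, pass to the ultraproduct, read off good $\vec u,\vec v$, and land in a generalized McDuff factor for $(\Gamma,(0))$ where the base-case bound $\theta_{m(\Gamma),0}\geq c(\Gamma)$ (coming from nonamenability via spectral gap of $\lambda_\Gamma$, which you correctly identify) gives the contradiction.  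Your sketch gestures at the right input (spectral gap from nonamenability) but misses both the good-unitary definability machinery and the ultraproduct reduction that actually carry the argument.
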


\noindent We then have the following precise form of Corollary \ref{eecor}:

\begin{cor}
Suppose that $\balpha,\bbeta\in 2^\omega$ are such that $\balpha|n-1=\bbeta|n-1$, $\balpha(n)=1$, $\bbeta(n)=0$.  Write $\bbeta=(\bbeta|n+1) \concat \bbeta^*$. Set $m:=m(T_{\bbeta^*}(\mathbb{F}_2))$ and $c:=c(T_{{\bbeta}^*}(\mathbb{F}_2))$.  Then $\theta_{m,n}^{M_{\balpha}}=0$ and $\theta_{m,n}^{M_{\bbeta}}\geq c$.
\end{cor}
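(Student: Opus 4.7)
The plan is to apply Theorem~\ref{mainBCIagain} directly to the finite truncations $\balpha|n+1$ and $\bbeta|n+1$, using the recursive structure of the groups $K_\bullet$ to express $M_\balpha$ and $M_\bbeta$ as finite-depth constructions over nonamenable ICC base groups. The first step is to establish the algebraic decomposition
\[
M_\balpha = L(K_{\balpha|n+1}(\Gamma_\balpha)), \qquad M_\bbeta = L(K_{\bbeta|n+1}(\Gamma_\bbeta)),
\]
where $\Gamma_\balpha := K_{\balpha^*}(\mathbb{F}_2)$ and $\Gamma_\bbeta := K_{\bbeta^*}(\mathbb{F}_2)$ for $\balpha^*,\bbeta^*$ the tails past the first $n+1$ coordinates. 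Algebraically this is the iterated identity $K_\balpha(\Gamma) = T_{\alpha_0}(K_{\sigma\balpha}(\Gamma))$ packaged into the recursive definition of $K_\bullet$; for $\balpha \in 2^\omega$ one must further check that this finite-depth peeling is compatible with the inductive limit used to define $K_\balpha$, which amounts to direct-limit continuity of the operations $T_0$ and $T_1$.

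Next I would verify that $\Gamma_\balpha$ and $\Gamma_\bbeta$ are nonamenable ICC groups, the hypothesis Theorem~\ref{mainBCIagain} imposes on its base group. This is a straightforward induction through the $T_\bullet$ tower starting from the nonamenable ICC group $\mathbb{F}_2$: each of $T_0$ and $T_1$ preserves nonamenability (their defining free products with copies of $\mathbb{Z}$ and semidirect product with $S_\infty$ never make a nonamenable input amenable) and preserves the ICC property via a routine centralizer calculation on the new normal factor.

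With these ingredients in hand, I apply Theorem~\ref{mainBCIagain} with $t=1$ twice. Taking the theorem's $\balpha$ to be $\balpha|n+1 \in 2^{n+1}$ (last coordinate $\balpha(n)=1$) and $\Gamma := \Gamma_\balpha$, the first clause of the theorem yields $\theta_{m',\,n+1}^{M_\balpha} = 0$ for every $m' \geq 1$. Taking it instead to be $\bbeta|n+1$ (last coordinate $\bbeta(n)=0$) and $\Gamma := \Gamma_\bbeta$, the second clause yields
\[
\theta_{\,m(\Gamma_\bbeta),\,n+1}^{M_\bbeta} \;\geq\; c_{n+1}(\Gamma_\bbeta).
\]
Setting $m := m(\Gamma_\bbeta)$ and $c := c_{n+1}(\Gamma_\bbeta)$, and specializing the first bound to this particular $m' = m$, gives the distinguishing pair claimed in the corollary.

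The only obstacle I anticipate is notational: aligning the conventions for $\balpha|k$ and for the subscript on $\theta_{\cdot,\cdot}$ between the statement of the theorem and that of the corollary, so that the $n+1$ appearing above coincides with the $n$ appearing in the corollary's subscript. Once these conventions are reconciled, the proof is pure unwinding of the recursive decomposition of $K_\balpha$ followed by two invocations of Theorem~\ref{mainBCIagain}.
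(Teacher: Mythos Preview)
Your proposal is correct and is exactly the argument the paper intends: the corollary is stated without proof, as an immediate consequence of Theorem~\ref{mainBCIagain} via the recursive decomposition $K_{\bbeta}(\mathbb{F}_2)=K_{\bbeta|n+1}(K_{\bbeta^*}(\mathbb{F}_2))$ (and similarly for $\balpha$), together with the observation that $K_{\bbeta^*}(\mathbb{F}_2)$ is a nonamenable ICC group. Your caveat about the off-by-one indexing between the $n$ in the corollary and the $n+1$ coming from $\bbeta|n+1\in 2^{n+1}$ is well taken---the paper's own notation is not fully consistent here (note also the typo $T_{\balpha}$ for $K_{\balpha}$ in the definition of $M_{\balpha}(\Gamma)$)---but modulo that bookkeeping your outline is the proof.
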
  

In the rest of this section, we explain the main ideas behind the proof of Theorem \ref{mainBCIagain}.

\subsection{The base case}  We start by describing how to find $\theta_{m,0}$.  First, we will need the following fact:

\begin{fact}
Let $\varphi_{\operatorname{unitary}}(U)$ be the formula $\max(\|UU^*-1\|_2,\|U^*U-1\|_2)$.  Then $Z(\varphi_{\operatorname{unitary}})$ is a $T_{\vNa}$-definable set.
\end{fact}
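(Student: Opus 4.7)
The plan is to verify that $\varphi_{\operatorname{unitary}}$ is an almost-near $T_{\vNa}$-formula; by condition~(3) of Theorem~\ref{maindefinabilitytheorem}, this yields that $Z(\varphi_{\operatorname{unitary}})$ is $T_{\vNa}$-definable. Fix $\epsilon > 0$, set $\delta := \epsilon/2$, and suppose that $U$ in the unit ball of a tracial von Neumann algebra $M$ satisfies $\|U^*U - 1\|_2 < \delta$ and $\|UU^* - 1\|_2 < \delta$. I will produce a unitary $W \in M$ with $\|W - U\|_2 \leq \epsilon$.

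Consider the polar decomposition $U = V|U|$, where $|U| := (U^*U)^{1/2}$ and $V \in M$ is the canonical partial isometry with $V^*V$ the support projection of $U^*U$ and $VV^*$ the range projection of $U$. From the identity $(|U|-1)(|U|+1) = U^*U - 1$ together with $|U|+1 \geq 1$ and $\|(|U|+1)^{-1}\| \leq 1$, we obtain $\||U|-1\|_2 \leq \|U^*U-1\|_2 < \delta$, and therefore $\|U-V\|_2 = \|V(|U|-1)\|_2 \leq \||U|-1\|_2 < \delta$. Moreover, since $(1-V^*V)U^*U = 0$ we have $1-V^*V = (1-V^*V)(1-U^*U)$, whence $\|1-V^*V\|_2 \leq \|1-U^*U\|_2 < \delta$, and symmetrically $\|1-VV^*\|_2 < \delta$.

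Since $M$ is finite, the comparison theorem for projections gives (after possibly exchanging roles) a partial isometry $V_0 \in M$ with $V_0^*V_0 = 1-V^*V$ and $V_0V_0^* \leq 1-VV^*$. Set $W := V + V_0$. The orthogonalities $VV^* \perp V_0V_0^*$ and $V^*V \perp V_0^*V_0$ force $V^*V_0 = 0 = V_0^*V$, so $W^*W = V^*V + V_0^*V_0 = 1$; thus $W$ is an isometry, and since $\tau(1-WW^*) = 1 - \tau(W^*W) = 0$ by traciality and faithfulness of $\tau$, we conclude $WW^* = 1$, so $W$ is a unitary in $M$. Finally, $\|W-U\|_2 \leq \|V_0\|_2 + \|V-U\|_2 = \sqrt{\tau(1-V^*V)} + \|V-U\|_2 < \delta + \delta = \epsilon$, as required.

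The substantive step is the extension of $V$ to a unitary $W$, which crucially uses finiteness of $M$: both the projection-comparison theorem and the implication ``isometry implies unitary'' fail in $\B(\H)$ for infinite-dimensional $\H$, which is exactly why one should expect the analogous statement to fail outside the tracial setting. The remaining manipulations are routine polar-decomposition estimates controlling $\||U|-1\|_2$ and the traces of the defect projections $1-V^*V$ and $1-VV^*$.
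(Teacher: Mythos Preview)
The paper states this result as a \emph{Fact} without proof (it is folklore, implicit in the axiomatization of \cite{FHS2}), so there is no argument to compare against.  Your proof is correct and is essentially the standard one: polar-decompose $U$, control $\||U|-1\|_2$ and the defect projections, then extend the partial isometry $V$ to a unitary using finiteness.

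One small wording point: invoking ``the comparison theorem \ldots\ after possibly exchanging roles'' is slightly loose in a non-factor, since comparison in a general von Neumann algebra only gives a central projection splitting the two cases.  The cleaner statement, which you are really using, is the finiteness/Kaplansky fact that in a finite von Neumann algebra $p\sim q$ implies $1-p\sim 1-q$; applied with $p=V^*V$ and $q=VV^*$ this yields a partial isometry $V_0$ with $V_0^*V_0=1-V^*V$ and $V_0V_0^*=1-VV^*$ on the nose, making $W=V+V_0$ unitary immediately and eliminating the ``possibly exchanging roles'' hedge.  With that cosmetic adjustment the argument is clean.
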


In light of the previous fact, in the sequel, we may consider quantifiers over unitaries.  When doing so, we will use the letters $U$ and $V$ (perhaps with subscripts) to denote variables ranging only over unitaries.

We now define the following formulae:
\begin{itemize}
\item $\chi(X,U_1,U_2):=100(\|[X,U_1]\|_2+\|[X,U_2]\|_2)$.  
\item For $m\geq 1$, set $\psi_m(V_1,V_2)$ to be
$$\sup_{\vec X,\vec Y}\left(\left( \inf_U \max_{1\leq i,j\leq m}\|[UX_iU^*,Y_j]\|_2\dotminus 2\max_{1\leq i\leq m}\sqrt{\chi(X_i,V_1,V_2)}\right)\right).$$
\item Set $\theta_{m,0}:=\inf_{V_1,V_2}\psi_m$.
\end{itemize}

We can now distinguish the base case.  First, using lemmas from \cite{BCI}, one establishes:

\begin{fact}
Suppose that $m,t\geq 1$.  Then $\theta_{m,0}^{M_1(\Gamma)^{\overline{\otimes} t}}=0$.
\end{fact}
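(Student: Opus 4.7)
The goal is to produce, for every $\varepsilon > 0$, unitaries $V_1, V_2 \in M_1(\Gamma)^{\overline{\otimes} t}$ such that for every pair of tuples $\vec X, \vec Y$ of unit-ball unitaries there is a unitary $U$ with
$$\max_{i,j}\|[UX_iU^*, Y_j]\|_2 \leq 2\max_i\sqrt{\chi(X_i, V_1, V_2)} + \varepsilon.$$
My strategy is two-step: a spectral-gap-type estimate to replace each $X_i$ by an element $X_i'$ in the joint centralizer of $V_1, V_2$, at $\|\cdot\|_2$-cost at most $\sqrt{\chi(X_i, V_1, V_2)}$; followed by a conjugation step using the $S_\infty$-symmetry of $T_1(\Gamma)$ to move the index-support of $X_i'$ disjoint from that of each $Y_j$.

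For the first step, take $V_1, V_2$ to be tensor products across the $t$ slots of $u_{\sigma_j^{(k)}}$, where $\sigma_1^{(k)}, \sigma_2^{(k)} \in S_\infty \subset T_1(\Gamma)$ generate a copy of the free group $\mathbb{F}_2$ for each $k$. Nonamenability of the diagonal $\mathbb{F}_2 = \langle V_1, V_2\rangle$, together with the fact that the tensor slots are $\mathbb{F}_2$-equivariantly independent, gives a spectral gap for the conjugation action of $\langle V_1, V_2\rangle$ on the ergodic part of $L^2(M_1(\Gamma)^{\overline{\otimes} t})$, uniform in $t$. Using the identity $\|[X, V]\|_2^2 = 2 - 2\operatorname{Re}\tr(V^*X^*VX)$ for unit-ball unitaries, this spectral gap yields a constant $C$ with $\|X_i - \mathbb{E}_{\{V_1, V_2\}'}(X_i)\|_2 \leq C\sqrt{\|[X_i, V_1]\|_2 + \|[X_i, V_2]\|_2}$. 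The numerical constant $100$ in the definition of $\chi$ is inserted precisely to absorb $C^2$; rounding $\mathbb{E}_{\{V_1, V_2\}'}(X_i)$ to a unitary $X_i'$ via a Powers--St{\o}rmer-style argument turns this into $\|X_i - X_i'\|_2 \leq \sqrt{\chi(X_i, V_1, V_2)}$.

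For the conjugation step, approximate each $X_i'$ and each $Y_j$ in $\|\cdot\|_2$ by finitely-supported elements of $\mathbb{C}[T_1(\Gamma)^{\times t}]$ to within $\varepsilon/(8m^2)$. In each slot $k$, let $A_k \subset \mathbb{N}$ be the finite set of indices $\ell$ for which some $\Gamma_\ell$- or $\Lambda_\ell$-component of a $Y_j$-approximant appears in slot $k$, and choose a permutation $\tau^{(k)} \in S_\infty$ that pushes the slot-$k$ index-support of the $X_i'$-approximants strictly past $\max A_k$. Setting $U := u_{\tau^{(1)}} \otimes \cdots \otimes u_{\tau^{(t)}}$, the defining relations $[\Gamma_i, \Lambda_j] = 0$ for $i \geq j$, together with the direct-sum commutativity among distinct-index $\Gamma$'s, force the slot-wise components of $UX_i'U^*$ and $Y_j$ to commute exactly; hence the tensor-product commutator vanishes at the truncated level. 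Telescoping $\|[UX_iU^*, Y_j]\|_2 \leq \|[UX_i'U^*, Y_j]\|_2 + 2\|X_i - X_i'\|_2$ and adding the approximation error yields the desired bound.

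The main obstacle lies in the interaction between the two steps: the joint centralizer of $V_1, V_2$ does not simply decompose as a tensor product of slot-wise $\mathbb{F}_2$-invariants, so an element $X_i' \in \{V_1, V_2\}'$ may involve ``diagonal'' correlations across slots. Consequently, finding finite-support approximants of $X_i'$ whose slot-$k$ index-supports can be simultaneously pushed past $\max A_k$ by independent permutations $\tau^{(k)}$ is more delicate than it first appears: one must control the interplay between the $\mathbb{F}_2$-invariance forced by the spectral-gap step and the index-separation required by the conjugation step. Quantifying this trade-off, uniformly in $t$, is presumably the content of the ``lemmas from \cite{BCI}'' alluded to in the statement; once these are in hand, the $\tau$-conjugation is essentially combinatorial bookkeeping in $S_\infty$.
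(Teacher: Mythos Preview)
The paper does not prove this fact itself; it defers entirely to ``lemmas from \cite{BCI}''. So there is no paper-proof to match, only your reconstruction, and that reconstruction has a genuine error in step one.

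You take $V_1, V_2$ to be tensor products of unitaries $u_{\sigma_j^{(k)}}$ with $\sigma_1^{(k)}, \sigma_2^{(k)}\in S_\infty$ chosen to ``generate a copy of the free group $\mathbb{F}_2$'', and you then invoke nonamenability of this $\mathbb{F}_2$ to obtain the spectral-gap bound. But in this paper $S_\infty$ is the group of permutations of $\mathbb{N}$ with \emph{finite} support, which is locally finite: any two elements already lie in some finite symmetric group $S_n$. Hence $S_\infty$ is amenable and contains no copy of $\mathbb{F}_2$; the group $\langle V_1, V_2\rangle$ is finite, not free, and the nonamenability argument collapses. (A finite group does have spectral gap in the sense of Example~\ref{finitegap}, but the resulting constant depends on that particular finite group; there is no reason it is absorbed by the hard-coded $100$ in $\chi$, and no reason it is uniform in $t$.)

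The mistake is not just a bad parameter choice; it inverts the roles of the two sources of unitaries. In the BCI argument the $S_\infty$-unitaries furnish the conjugator $U$ for the \emph{inner} infimum --- exactly your second step --- while the outer pair $V_1, V_2$ is taken from elsewhere in $T_1(\Gamma)$, so that $\{V_1,V_2\}'\cap M$ is a subalgebra on which the $S_\infty$-shifting of step two can actually be carried out. With $V_1, V_2$ themselves drawn from $S_\infty$, the commutant $\{V_1,V_2\}'$ is the fixed-point algebra of a finite permutation group, which is the wrong kind of object for step two: elements invariant under a block of permutations are not the ``finitely index-supported'' elements that a further permutation can slide past the supports of the $Y_j$. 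Your final paragraph is right that the delicate point is the interface between the two steps, but the gap opens earlier, at the choice of $V_1, V_2$.
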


Spectral gap comes into play in the following:

\begin{fact}\label{nonamenable}
Suppose that $\Gamma$ is not amenable.  Then there  is $m=m(\Gamma)$ and $c=c(\Gamma)$ such that, for any von Neumann algebra $Q$ and any $M_0(\Gamma)\subseteq M\subseteq M_0(\Gamma)\overline{\otimes} Q$, we have $\theta_{m,0}^M\geq c$.
\end{fact}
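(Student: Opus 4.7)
\emph{Proof plan.} The plan is to extract the content of non-amenability of $\Gamma$ as spectral gap of the left regular representation via Example \ref{amenablespec}: we obtain a finite symmetric set $F = \{g_1,\ldots,g_m\} \subseteq \Gamma$ and a constant $c_0 > 0$ such that $\max_{g \in F}\|\lambda_\Gamma(g)\xi - \xi\| \geq c_0\|\xi\|$ for every $\xi \in \ell^2\Gamma$. We set $m(\Gamma) := m$ and will eventually take $c(\Gamma)$ to be a definite fraction of $c_0$. It suffices to show that for every $V_1, V_2 \in \U(M)$, some choice of $\vec X, \vec Y$ makes the inner expression in $\psi_m(V_1, V_2)$ exceed $c(\Gamma)$.

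The construction of $\vec X$ would use a finite-generation trick together with the asymmetric commutation relations built into $T_0(\Gamma)$. Given $\eta > 0$, I would approximate $V_1, V_2$ to within $\eta$ in the $\|\cdot\|_2$-norm by elements $V_1', V_2'$ lying in the $*$-subalgebra of $M_0(\Gamma)\overline{\otimes} Q$ generated by $\Gamma_1,\ldots,\Gamma_k,\Lambda_1,\ldots,\Lambda_k$ and $Q$, for some $k = k(V_1, V_2, \eta)$. The defining relations of $T_0(\Gamma)$ force every element of $\Gamma_{k+1}$ to commute with $\Gamma_i$ for $i \neq k+1$ (distinct direct summands in $\tilde{\Gamma}$) and with $\Lambda_j$ for $j \leq k+1$, so $L(\Gamma_{k+1})$ commutes with $V_1', V_2'$. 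Setting $X_i := \lambda_{\Gamma_{k+1}}(g_i) \in L(\Gamma_{k+1}) \subseteq M_0(\Gamma) \subseteq M$ for $i = 1,\ldots,m$ then gives $\|[X_i, V_j]\|_2 \leq 2\eta$, so that $\chi(X_i, V_1, V_2) < 400\eta$ and the penalty satisfies $2\max_i\sqrt{\chi(X_i, V_1, V_2)} < 40\sqrt\eta$. For $\vec Y$, I would take $Y_j$ to be a canonical generator of $L(\Lambda_{k+1+j}) \subseteq M_0(\Gamma) \subseteq M$ for $j = 1,\ldots,m$; each $Y_j$ fails to commute with $L(\Gamma_{k+1})$ because $[\Gamma_{k+1}, \Lambda_{k+1+j}] \neq 0$ whenever $j \geq 1$.

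The crux, and principal obstacle, is to establish the uniform lower bound
\[ \inf_{U \in \U(M)} \max_{i,j}\|[U X_i U^*, Y_j]\|_2 \geq c_1 \]
for some constant $c_1 = c_1(\Gamma)$ depending only on $c_0$. Were the left-hand side small for some $U$, then $U L(\Gamma_{k+1}) U^*$ would approximately commute with each $L(\Lambda_{k+1+j})$, placing it approximately inside the joint relative commutant of these algebras in $M_0(\Gamma) \overline{\otimes} Q$. I would derive a contradiction by a quantitative Popa-style intertwining-by-bimodules argument, driven by the spectral gap constant $c_0$ for the conjugation action of $F$ on the orthogonal complement of the relative commutant of $L(\Gamma_{k+1})$: non-amenability of $\lambda_\Gamma$, transported through the standard representation of $M_0(\Gamma)\overline{\otimes} Q$, provides the quantitative obstruction against such an approximate embedding, uniformly in the ambient algebra $Q$. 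Once this lower bound is in hand, I would set $c(\Gamma) := c_1/2$ and choose $\eta > 0$ small enough that $40\sqrt\eta < c_1/2$; the computation then yields $\psi_m(V_1, V_2) \geq c_1 - 40\sqrt\eta > c(\Gamma)$ uniformly in $V_1, V_2 \in \U(M)$, whence $\theta_{m,0}^M \geq c(\Gamma)$.
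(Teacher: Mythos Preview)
The paper does not actually give a proof of this Fact; it is stated with the single remark that ``the key fact underlying the proof is \ldots\ that $\Gamma$ is non-amenable if and only if $\lambda_\Gamma$ has spectral gap'' and that ``the $m$ and the $c$ come from Proposition~\ref{spectgap}(1).'' Your outline is fully consistent with that hint and with the actual argument in \cite{BCI} and \cite{braddisaachenry}: extract a spectral-gap pair $(F,c_0)$ for $\lambda_\Gamma$, set $m=|F|$, approximate arbitrary $V_1,V_2$ by elements supported on finitely many $\Gamma_i,\Lambda_j,Q$, place the $X_i$ in a fresh copy $\Gamma_{k+1}$ so that the penalty term $2\max_i\sqrt{\chi(X_i,V_1,V_2)}$ is arbitrarily small, and take the $Y_j$ from later $\Lambda$'s. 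Your arithmetic for the penalty is correct (using that the $X_i$ are unitaries).

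On the crux, two comments. First, ``Popa-style intertwining-by-bimodules'' is not quite the right label; the mechanism is a direct spectral-gap estimate. Rewriting $\|[UX_iU^*,Y_j]\|_2=\|[X_i,U^*Y_jU]\|_2$, the point is that the conjugation representation of $\Gamma_{k+1}$ on $\ell^2(T_0(\Gamma))\otimes L^2(Q)$, restricted to the orthocomplement of the $\Gamma_{k+1}$-fixed vectors, decomposes as a direct sum of quasi-regular representations of $\Gamma$ with amenable stabilizers (coming from the free-product structure of $T_0(\Gamma)$), hence is weakly contained in $\lambda_\Gamma$; thus the \emph{same} pair $(F,c_0)$ witnesses spectral gap there, uniformly in $Q$. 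Second, once one knows this, smallness of $\max_i\|[X_i,U^*Y_jU]\|_2$ forces $U^*Y_jU$ to be close to the $\Gamma_{k+1}$-commutant, and one must still derive a uniform lower bound on the distance from any unitary conjugate of the canonical generator of $\Lambda_{k+1+j}$ to that commutant, using the group structure (this is where the absence of $S_\infty$ in $T_0$ versus $T_1$ is decisive). You correctly flag this step as the principal obstacle and do not resolve it; since the paper also declines to, your plan is at the same level of completeness as the paper's sketch, and is pointed in the right direction.
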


While the proof of this is quite technical, we mention that the key fact underlying the proof is the statement from Example \ref{amenablespec}, namely that $\Gamma$ is non-amenable if and only if $\lambda_\Gamma$ has spectral gap.  Thus, the $m$ and the $c$ come from Proposition \ref{spectgap}(1).

\subsection{A digression on good unitaries and generalized McDuff factors}   Before explaining how the inductive step works, we need to introduce some key definitions.  First, some:

\begin{notation}
Let $M$ be a von Neumann algebra and $\vec a$, $\vec b$ be sequences from $M$.
\begin{itemize}
\item We write $C_M(\vec a):=\{b\in M \ : \ [b,a_i]=0 \text{ for all }i\}$.  We usually just write $C(\vec a)$ if $M$ is clear from context.
\item We write $\vec a\leq \vec b$ to mean $C(\vec b)\subseteq C(\vec a)$.
\end{itemize}
\end{notation}

\begin{df}
Suppose that $M$ is a II$_1$ factor.  A pair $(u_1,u_2)\in \U(M)^2$ is called a \textbf{pair of good unitaries} if, for all $a\in M$, we have $d(a,C(u_1,u_2))\leq \sqrt{\chi(a,u_1,u_2)}$. 
\end{df}

In other words, a pair of unitaries $(u_1,u_2)$ is good if $C(u,v)$ is a $\sqrt{\chi}$-definable subset of $M$ with ``modulus of continuity'' the identity function.  This is also equivalent to saying that the algebra generated by $u$ and $v$ has spectral gap in $M$ in a precise numerical way.  The following two lemmas are clear:

\begin{lemma}
There is a formula $\varphi_{\good}(\vec U)$ such that, in \emph{$\omega$-saturated} II$_1$ factors (e.g. ultraproducts) $M$, $Z(\varphi_{\good}^M)$ is the set of pairs of good unitaries in $M$. 
\end{lemma}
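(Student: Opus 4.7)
The plan is to define $\varphi_{\good}$ as a weighted sum of simple formulae that approximate the distance to the centralizer $C(u_1,u_2)$, and then invoke $\omega$-saturation to convert almost-commutation into exact commutation. For each integer $n \geq 1$, set
\[
g_n(U_1, U_2, a) \;:=\; \inf_b \bigl( d(a, b) + n \max(\|[b, U_1]\|_2,\, \|[b, U_2]\|_2) \bigr),
\]
where $b$ (like $a$) ranges over the operator-norm unit ball sort. Taking $b = 0$ shows $g_n \leq \|a\|_2 \leq 1$, so each $g_n$ is uniformly bounded and is a simple $L_{\vNa}$-formula. Now set $\psi_n(U_1, U_2) := \sup_a\bigl( g_n(U_1, U_2, a) \dotminus \sqrt{\chi(a, U_1, U_2)} \bigr) \in [0,1]$ and define
\[
\varphi_{\good}(U_1, U_2) \;:=\; \sum_{n \geq 1} 2^{-n}\, \psi_n(U_1, U_2),
\]
which is a $T_{\vNa}$-formula since its partial sums form a uniformly Cauchy sequence in $\mathcal{F}_{U_1, U_2}$.

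By construction, $\varphi_{\good}^M(u_1, u_2) = 0$ if and only if $g_n^M(u_1, u_2, a) \leq \sqrt{\chi(a, u_1, u_2)}$ for every $n \geq 1$ and every $a \in M_1$. I claim that, in any $\omega$-saturated II$_1$ factor $M$, this condition is equivalent to goodness of $(u_1, u_2)$. The forward implication is elementary: given $a \in M_1$, take $b := \mathbb{E}_{C(u_1,u_2)}(a)$, which lies in $C(u_1, u_2) \cap M_1$ since conditional expectations are operator-norm contractive; goodness gives $d(a, b) = d(a, C(u_1, u_2)) \leq \sqrt{\chi(a, u_1, u_2)}$, and plugging this $b$ into $g_n$ immediately yields $g_n^M \leq \sqrt{\chi^M}$.

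For the converse, suppose $\varphi_{\good}^M(u_1, u_2) = 0$ and fix $a \in M_1$; set $r := \sqrt{\chi^M(a, u_1, u_2)}$. For each $n$, the inequality $g_n^M(u_1, u_2, a) \leq r$ produces a witness $b_n \in M_1$ with $d(a, b_n) \leq r + n^{-1}$ and $\max(\|[b_n, u_1]\|_2, \|[b_n, u_2]\|_2) \leq (r + n^{-1})/n$, the latter tending to $0$. The partial type
\[
p(x) \;:=\; \{x \in M_1\} \cup \bigcup_{n \geq 1} \bigl\{ d(a, x) \leq r + n^{-1},\ \|[x, u_1]\|_2 \leq n^{-1},\ \|[x, u_2]\|_2 \leq n^{-1} \bigr\}
\]
is therefore finitely satisfiable in $M$ (a $b_N$ with $N$ sufficiently large satisfies any finite fragment), so by $\omega$-saturation it is realized by some $b \in C(u_1, u_2) \cap M_1$ with $d(a, b) \leq r$, yielding $d(a, C(u_1, u_2)) \leq r = \sqrt{\chi(a, u_1, u_2)}$, as required.

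I expect the main subtlety to lie precisely in the last paragraph: one must choose the coefficient $n$ inside $g_n$ (rather than, say, a fixed large constant) so that the almost-commutation bounds on the $b_n$ genuinely tend to $0$ while the distance bounds tend to $r$, making the type $p(x)$ finitely satisfiable. The boundedness and Cauchy-sum verifications, together with the forward direction via conditional expectation, are by contrast routine.
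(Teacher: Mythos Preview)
Your proof is correct. The paper itself declares this lemma ``clear'' and gives no argument, so there is no proof to compare against; your write-up supplies exactly the kind of saturation argument one would expect.

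One remark on economy: the weighted sum over $n$ is not needed. The single simple formula
\[
\varphi_{\good}(\vec U)\;:=\;\sup_a\,\inf_b\,\max\Bigl(\|[b,U_1]\|_2,\ \|[b,U_2]\|_2,\ d(a,b)\dotminus\sqrt{\chi(a,\vec U)}\Bigr)
\]
already does the job. If $(u_1,u_2)$ is good, plugging $b=\mathbb{E}_{C(u_1,u_2)}(a)$ gives value $0$ as in your forward direction. Conversely, if the value is $0$ at $(u_1,u_2)$ in an $\omega$-saturated $M$, then for each fixed $a$ the inner $\inf$ equals $0$, so the type $\{d(a,x)\leq\sqrt{\chi(a,\vec u)}+n^{-1},\ \|[x,u_i]\|_2\leq n^{-1}:n\geq 1,\ i=1,2\}$ is finitely satisfiable and hence realized, exactly as in your last paragraph. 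In your construction the growing coefficient $n$ inside $g_n$ is doing the work that the $\max$ does here in one shot; both approaches lead to the same saturation step, which is indeed where the content lies.
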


\begin{lemma}
There is a formula $\psi(\vec X,\vec U)$ such that, for all II$_1$ factors $M$, all $\vec a\in M$, and all pairs of good unitaries $\vec u$ from $M$, we have $\vec a\leq \vec u$ if and only if $\psi(\vec a,\vec u)^M=0$.
\end{lemma}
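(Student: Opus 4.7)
The plan is to take
$$\psi(\vec X, \vec U) \ := \ \sup_{Y \in M_1}\left( \max_i \|[Y, X_i]\|_2 \dotminus 2k \sqrt{\chi(Y, U_1, U_2)} \right),$$
where $k$ is the operator-norm bound on the sort ranged over by $\vec X$ (so that $\|X_i\|_{\mathrm{op}} \leq k$), and then to verify the biconditional directly from the definition of good unitaries. The formula is well-defined because $\chi$ is bounded on pairs of unitaries and $\sqrt{\cdot}$ is continuous on $[0,\infty)$.

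For the forward direction, I would assume $\vec a \leq \vec u$ with $(u_1, u_2)$ a pair of good unitaries, and fix an arbitrary $Y \in M_1$. Setting $\delta := \sqrt{\chi(Y, \vec u)}$, the goodness hypothesis yields some $Y' \in C(\vec u)$ with $\|Y - Y'\|_2 \leq \delta$. Since $C(\vec u) \subseteq C(\vec a)$ by hypothesis, $[Y', a_i] = 0$ for every $i$, and hence
$$\|[Y, a_i]\|_2 \ = \ \|[Y - Y', a_i]\|_2 \ \leq \ 2\|a_i\|_{\mathrm{op}} \|Y - Y'\|_2 \ \leq \ 2k\delta,$$
using the standard bimodule inequality $\|ax\|_2, \|xa\|_2 \leq \|a\|_{\mathrm{op}} \|x\|_2$ in $L^2(M)$. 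The $\dotminus$ expression is thus $\leq 0$ for every $Y$, yielding $\psi(\vec a, \vec u) = 0$.

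For the reverse direction, I would assume $\psi(\vec a, \vec u) = 0$ (no goodness needed) and observe that any $Y \in C(\vec u) \cap M_1$ satisfies $\chi(Y, \vec u) = 0$, so the integrand collapses to $\max_i \|[Y, a_i]\|_2$, which therefore must equal $0$. Hence $C(\vec u) \cap M_1 \subseteq C(\vec a)$, and by rescaling this extends to $C(\vec u) \subseteq C(\vec a)$, i.e., $\vec a \leq \vec u$.

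The only real subtlety is that the approximant $Y'$ produced by goodness is close to $Y$ only in $\|\cdot\|_2$, not in operator norm, so $Y'$ may lie outside any bounded sort. I would sidestep this by pushing the estimate for $[Y-Y', a_i]$ onto the operator norm of $a_i$ (which is controlled by the sort of $\vec X$) rather than onto $Y'$. Once that observation is made, both directions are essentially one-line arguments.
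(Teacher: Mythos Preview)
Your proposal is correct, and it is exactly the argument the paper has in mind: the lemma is stated as ``clear'' with no proof given, and your formula $\psi$ mirrors the structure of the paper's $\psi_m$ (the same $\dotminus\,2\sqrt{\chi}$ pattern), so you have simply written out the intended verification. One small remark: under the paper's standing convention that all free variables range over the operator norm unit ball, your constant $k$ is $1$, and in fact the nearest point $Y'=\mathbb{E}_{C(\vec u)}(Y)$ already lies in $M_1$, so the sort issue you flag does not actually arise---but your workaround via $\|a_i\|_{\mathrm{op}}$ is perfectly valid regardless.
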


We come to our other key definition.

\begin{df}
A \textbf{generalized McDuff ultraproduct for $\Gamma$ and $\balpha$} is one of the form
$$\prod_\u M_{\balpha}(\Gamma)^{\overline{\otimes} t_s}.$$
\end{df}

The following fact explains the importance of generalized McDuff ultraproducts: 

\begin{fact}[\cite{braddisaachenry} elaborating on results from \cite{BCI}]
Suppose that $\Gamma$ is an ICC group, $\balpha\in 2^{<\omega}$ and $M$ is a generalized McDuff ultraproduct for $\Gamma$ and $\balpha$.  Write $\balpha=\alpha_0\balpha^{\#}$.  Then for any pair of good unitaries $\vec u$ and any countable sequence $\vec a$ from $M$, there is a pair of good unitaries $\vec v$ from $M$ such that $\vec v>(\vec a,\vec u)$ and $C(\vec v)'\cap C(\vec u)$ is a generalized McDuff ultraproduct corresponding to $\Gamma$ and $\balpha^{\#}$.
\end{fact}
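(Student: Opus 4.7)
The plan is to build $\vec v$ coordinate-wise in $M=\prod_\u L(T_\balpha(\Gamma))^{\overline{\otimes} t_s}$, where $T_\balpha(\Gamma)=T_{\alpha_0}(\Delta)$ for $\Delta:=T_{\balpha^{\#}}(\Gamma)$, exploiting the explicit group-theoretic structure of $T_{\alpha_0}(\Delta)$, and then to use $\aleph_1$-saturation of the ultraproduct to pass from an approximate to an exact statement. Recall that $T_{\alpha_0}(\Delta)$ is generated by commuting copies $\Delta_1,\Delta_2,\ldots$ (from $\tilde\Delta$), free copies $\Lambda_1,\Lambda_2,\ldots$ of $\Z$, and (when $\alpha_0=1$) the symmetric group $S_\infty$ permuting indices, subject to $[\Delta_i,\Lambda_j]=0$ whenever $i\geq j$. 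By saturation, it suffices to produce, for $\u$-almost every $s$, a pair $\vec v_s$ of unitaries in $L(T_\balpha(\Gamma))^{\overline{\otimes} t_s}$ satisfying the approximate form of the three conditions with uniform tolerance in $s$.

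In each coordinate, I would first approximate $\vec u_s,\vec a_s$ in $\|\cdot\|_2$ by elements $\vec u_s',\vec a_s'$ whose supports involve only indices $i,j\leq N_s$ (among $\Delta_i,\Lambda_j$) and finitely many of the $t_s$ tensor slots. Using the fresh indices $>N_s$, build $\vec v_s=(v_{s,1},v_{s,2})$ so that $W^*(\vec v_s)$ contains $W^*(\vec u_s',\vec a_s')$ and also contains a free copy of $L(\mathbb{F}_2)$ produced from $\Lambda_{N_s+1},\Lambda_{N_s+2}$ (for $\alpha_0=0$), respectively from $\Lambda_{N_s+1}$ together with a transposition in $S_\infty$ moving only indices $>N_s$ (for $\alpha_0=1$). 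The packaging of $\vec u_s',\vec a_s'$ and the fresh free-group pair into a single pair of unitaries uses the standard fact that any separable von Neumann subalgebra of a II$_1$ factor is singly-two-generated. The crucial point is that by $[\Delta_i,\Lambda_j]=0$ for $i\geq j$, every $\Delta_i$ with $i>N_s$ lies in the centralizer of the fresh $\Lambda$-generators, so the commutant of $\vec v_s$ inside the tensor power retains a large independent family of copies of $L(\Delta)$.

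Goodness of $\vec v_s$, uniformly in $s$, follows because $\vec v_s$ generates an algebra containing a free $L(\mathbb{F}_2)$-factor sitting as an amalgamated free product over $W^*(\vec u_s',\vec a_s')$ in the sense of \cite{Markov}; the non-amenability of $\mathbb{F}_2$ (Example \ref{amenablespec}) combined with Fact \ref{connesfact} then supplies the spectral-gap bound $d(x,C(\vec v_s))\leq\sqrt{\chi(x,v_{s,1},v_{s,2})}$ with constants independent of $s$. The relative commutant $C(\vec v_s)'\cap C(\vec u_s)$ is then, by inspection of the group centralizer of the fresh $\Lambda$- and $S_\infty$-generators, the von Neumann algebra generated by the $\Delta_i$ with $i>N_s$ across the residual tensor slots, which is canonically isomorphic to $L(T_{\balpha^{\#}}(\Gamma))^{\overline{\otimes} r_s}$ for some integer $r_s$. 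Passing to the ultraproduct, one invokes Theorem \ref{defcomm1} and Corollary \ref{defcomm2} (definability of relative commutants under the w-spectral-gap assumption supplied by the $L(\mathbb{F}_2)$-component) to commute the double-commutant construction with $\prod_\u$, giving $C(\vec v)'\cap C(\vec u)\cong\prod_\u L(T_{\balpha^{\#}}(\Gamma))^{\overline{\otimes} r_s}$, i.e.\ a generalized McDuff ultraproduct for $\Gamma$ and $\balpha^{\#}$.

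The main obstacle I expect is reconciling the three requirements simultaneously: goodness (a spectral-gap inequality with constants uniform in $s$), the order condition $\vec v>(\vec a,\vec u)$ (forcing $W^*(\vec v_s)$ to actually contain $W^*(\vec u_s',\vec a_s')$ and not merely commute with it), and the precise shape of the relative commutant (requiring that taking double-commutants commute with the ultraproduct). These three are in tension, and it is only the definability results from Section 5, together with the rigidity of the semidirect-product structure of $T_{\alpha_0}(\Delta)$, that make the simultaneous construction work.
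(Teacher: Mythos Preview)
The paper does not prove this statement; it is quoted as a fact from \cite{braddisaachenry} building on \cite{BCI}. Your overall architecture---work coordinate-wise in $\prod_\u L(T_{\alpha_0}(\Delta))^{\overline{\otimes} t_s}$, exploit the explicit presentation of $T_{\alpha_0}(\Delta)$ to produce fresh generators, then use definability of relative commutants to commute the construction with the ultraproduct---is indeed the shape of the argument in those references. But the proposal has a genuine gap at the ``packaging'' step.

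You invoke ``the standard fact that any separable von Neumann subalgebra of a II$_1$ factor is singly-two-generated'' to wrap $W^*(\vec u_s',\vec a_s')$ together with your fresh free generators into a single pair $\vec v_s$. This is not a standard fact; the generator problem for von Neumann algebras is open, and even in cases where two-generation is known, the generators produced have no reason to be \emph{good} in the precise numerical sense required here. Goodness is the inequality $d(x,C(v_1,v_2))\leq\sqrt{\chi(x,v_1,v_2)}$ with the specific constant $100$ baked into $\chi$; an abstract spectral-gap statement coming from non-amenability of $\mathbb{F}_2$ and Fact~\ref{connesfact} gives \emph{some} constant, not this one, and it gives it for the free-group generators themselves, not for whatever unitaries you obtain after absorbing $\vec u_s',\vec a_s'$ into them. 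The two requirements---containing $W^*(\vec u_s',\vec a_s')$ and satisfying the exact goodness bound---pull in different directions, and your argument does not reconcile them.

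In \cite{BCI} and \cite{braddisaachenry} the good unitaries are not manufactured by an abstract two-generation lemma. They are taken to be specific group unitaries (built from the canonical generators of the $\Lambda_j$'s, together with the already-given $\vec u$), and the constant in the goodness inequality is obtained by an explicit free-probability estimate for Haar unitaries in free position---this is where the $100$ comes from. The order condition $\vec v>(\vec a,\vec u)$ is then arranged not by forcing $W^*(\vec v_s)\supseteq W^*(\vec a_s',\vec u_s')$ coordinate-wise, but by choosing the fresh $\Lambda$-indices so that the resulting relative commutant structure is correct and letting saturation in the ultraproduct absorb the countable sequence $\vec a$. Your identification of $C(\vec v)'\cap C(\vec u)$ with a generalized McDuff ultraproduct for $\balpha^{\#}$, and your use of Theorem~\ref{defcomm1}/Corollary~\ref{defcomm2} to justify commuting $(\cdot)'\cap(\cdot)$ with $\prod_\u$, are on the right track; the defect is specifically in how $\vec v$ is produced and why it is good.
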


\subsection{The inductive step}  The preceding fact illustrates how we should proceed to find $\theta_{m,1}$.  Indeed, suppose that $\balpha=(\alpha_0,1)$.  Fix $t\geq 1$ and let $M:=\prod_\u M_{\balpha}^{\overline{\otimes} t}$.  Let $\vec u$ be a nontrivial pair of good unitaries in $M$.  Given any $a\in M$, there is $\vec v>(a,\vec u)$.  Since $C(\vec v)'\cap C(\vec u)$ is generalized McDuff for $\Gamma$ and $(0)$, Fact \ref{nonamenable} (and the \L os theorem) implies that $\theta_{m,0}^{C(\vec v)'\cap C(\vec u)}=0$.  In order to extract a genuine sentence witnessing this phenomenon, we need:  

\begin{fact}[\cite{braddisaachenry}]
For any sentence $\theta$ in prenex normal form, there is a formula $\tilde{\theta}(\vec U,\vec V)$ such that, for any II$_1$ factor $M$ and pairs of good unitaries $\vec u$ and $\vec v$ from $M$ with $C(\vec v)\subseteq C(\vec u)$, we have 
$$\tilde{\theta}(\vec u,\vec v)^M=\theta^{C(\vec v)'\cap C(\vec u)}.$$
\end{fact}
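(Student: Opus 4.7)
The plan is to induct on the prenex structure of $\theta$, with the inductive hypothesis extended to formulas with free variables, the critical uniform ingredient being that the subalgebra $C(\vec v)'\cap C(\vec u)$ is definable inside $M$ with a modulus of continuity depending neither on $M$ nor on the admissible pair $(\vec u,\vec v)$. That uniformity is precisely what lets a single formula $\tilde{\theta}(\vec U,\vec V)$ work across all $(M,\vec u,\vec v)$.

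I would first establish the uniform definability in two steps. By the goodness of $\vec u$, the estimate $d(a,C(\vec u))\le\sqrt{\chi(a,\vec u)}$ holds in every II$_1$ factor $M$, so $C(\vec u)\cap M_r$ is $\chi(\cdot,\vec u)$-definable inside $M_r$ with modulus $\sqrt{\,\cdot\,}$ independent of $(M,\vec u)$; tracking through the proof of (5) implies (1) in Theorem \ref{maindefinabilitytheorem} gives an explicit formula $\varphi_{C(\vec U)}(x,\vec U)$ whose interpretation at $\vec u$ computes $d(x,C(\vec u))$, uniformly. Applying the same reasoning to $\vec v$, Corollary \ref{defcomm2} (via Fact \ref{dixmier}) yields a uniform formula realizing $d(x,C(\vec v)'\cap M)$. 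Since $C(\vec v)'\cap C(\vec u) = (C(\vec v)'\cap M)\cap C(\vec u)$ is the intersection of two uniformly definable subalgebras, taking the maximum of the two distance formulas (and exploiting that the almost-nearness moduli of each are absolute) produces a single formula $\varphi_{rel}(x,\vec U,\vec V)$ realizing $d(x,C(\vec v)'\cap C(\vec u))$ for every $M$ and every admissible pair. Rescaling provides analogous formulas on each operator-norm ball sort.

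Now the induction. For the quantifier-free matrix $\phi(x_1,\dots,x_k)$ of $\theta$: since $C(\vec v)'\cap C(\vec u)$ is a von Neumann subalgebra of $M$ inheriting the $*$-algebra structure, the trace, and the $\|\cdot\|_2$-metric, all atomic formulas (polynomial $\|\cdot\|_2$-values, traces of polynomials, distances) evaluate identically whether one takes the arguments in $M$ or in the subalgebra, and continuous connectives commute with evaluation; so $\tilde{\phi}:=\phi$ (with no dependence on $\vec U,\vec V$) suffices. For a quantifier layer, suppose $\theta=Q\,x\,\theta'(x,\vec y)$ with $Q\in\{\sup,\inf\}$ and that $\tilde{\theta'}(\vec U,\vec V,x,\vec y)$ has already been produced. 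The required translation is a quantifier over $C(\vec V)'\cap C(\vec U)$ applied to $\tilde{\theta'}$, which, using $\varphi_{rel}$ and the explicit construction from the proof of (5)$\Rightarrow$(1) of Theorem \ref{maindefinabilitytheorem} (a continuous correction term built from a modulus of continuity of $\tilde{\theta'}$ in $x$ composed with $\varphi_{rel}$), rewrites as a genuine $M$-formula $\tilde{\theta}(\vec U,\vec V,\vec y)$. After the final quantifier is absorbed, no free variables remain and $\tilde{\theta}(\vec U,\vec V)$ has been produced.

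The main obstacle is the uniformity itself: the translation of a quantifier over the subalgebra into a quantifier over the ambient operator-norm ball must be governed by a single continuous correction function that does not depend on $(M,\vec u,\vec v)$, since otherwise the inductive construction would give a family of formulas rather than one. This is handled by noting that each step introduces only modulus data that is absolute: the square root in the goodness inequality, the Dixmier-type estimate underlying Fact \ref{dixmier} (which is a universal bound in any tracial von Neumann algebra), and the fixed moduli of continuity of the finitely many atomic formulas and continuous connectives appearing in $\theta$. Cascading these absolute moduli through the finite nested construction yields a single $\tilde{\theta}(\vec U,\vec V)$ with the required property.
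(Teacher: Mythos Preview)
Your overall strategy---establish uniform definability of $C(\vec v)'\cap C(\vec u)$ and then induct on the prenex structure---is exactly the paper's approach (the paper phrases it via added constants and a $T_2$-definable set rather than free variables, but this is cosmetic). However, your justification for the key definability step has a genuine gap.

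You write that since $C(\vec v)'\cap M$ and $C(\vec u)$ are each uniformly definable, ``taking the maximum of the two distance formulas (and exploiting that the almost-nearness moduli of each are absolute) produces a single formula $\varphi_{rel}$ realizing $d(x,C(\vec v)'\cap C(\vec u))$.'' This does not follow. The intersection of two definable sets need not be definable, and the maximum of two distance functions is essentially never the distance to the intersection; knowing absolute almost-nearness moduli for each set separately is irrelevant to whether the intersection is definable.

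What actually makes $C(\vec v)'\cap C(\vec u)$ definable is the hypothesis $C(\vec v)\subseteq C(\vec u)$, and this is where the paper's argument differs from yours. The paper uses the formula $\varphi(x)=\max\bigl(d(x,C(\vec u)),\,\sup_{y\in C(\vec v)}\|[x,y]\|_2\bigr)$ and argues in two steps: if $\varphi(x)$ is small, first pass to $x'\in C(\vec u)$ close to $x$; then $\sup_{y\in C(\vec v)}\|[x',y]\|_2$ is still small, and now Fact~\ref{dixmier} is applied \emph{with ambient algebra} $C(\vec u)$ (which is legitimate because $C(\vec v)\subseteq C(\vec u)$) to produce $x''\in C(\vec v)'\cap C(\vec u)$ close to $x'$. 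Equivalently, one uses that $\mathbb{E}_{C(\vec u)}$ is a $C(\vec u)$-bimodule map and hence carries $C(\vec v)'\cap M$ into $C(\vec v)'\cap C(\vec u)$. Your choice of formula can also be salvaged by this conditional-expectation observation, but you must supply the argument; the reasoning you gave is not sufficient, and without the containment $C(\vec v)\subseteq C(\vec u)$ the conclusion would simply be false.
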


\begin{proof}[Proof Sketch]
We sketch a softer proof here than the one given in \cite{braddisaachenry}.  First let $L_1$ denote the extension of the language of tracial von Neumann algebras obtained by adding two new constants $c_{u_1}$ and $c_{u_2}$.  We let $T_1$ denote the $L_1$-theory extending the theory of tracial von Neumann algebras stating that the interpretations of $c_{u_1}$ and $c_{u_2}$ are good unitaries.  It is then clear that the $T_1$-functor mapping a model $(M,u_1,u_2)$ of $T_1$ to $C(u_1,u_2)$ is a $T_1$-definable set and the witnessing $T_1$-formula is a simple $L_1$-formula. 

Next, let $L_2$ denote the extension of $L_1$ obtained by adding two further constant symbols $c_{v_1}$ and $c_{v_2}$.  Let $T_2$ be the $L_2$-theory obtained by adding to $T_1$ axioms stating that the interpretations of $c_{v_1}$ and $c_{v_2}$ are also good unitaries and that $C(v_1,v_2)$ is contained in $C(u_1,u_2)$.  (To do this, one needs the result from the previous paragraph, namely that $C(v_1,v_2)$ is a definable set.)  Observe now that the $T_2$-functor mapping a model $(M,u_1,u_2,v_1,v_2)$ of $T_2$ to $C(v_1,v_2)'\cap C(u_1,u_2)$ is a $T_2$-definable set with witnessing formula a simple  $L_2$-formula.  Indeed, let $\varphi(x):=\max(d(x,C(u_1,u_2)),\sup_{y\in C(v_1,v_2)}\|[x,y\|_2)$.  If $\varphi(x)$ is small, then by the first paragraph, $x$ is near $x'\in C(u_1,u_2)$ for which $\sup_{y\in C(v_1,v_2)}\|[x',y]\|_2$ is still small.  But then by Fact \ref{dixmier} (applied to $C(u_1,u_2)$), we see that $x'$ is near $x''\in C(v_1,v_2)'\cap C(u_1,v_1)$.

Now given any sentnence $\theta$ (in the original language of von Neumann algebras) that is in prenex normal form, it is straightforward to construct an $L_2$-sentence $\theta'$ such that, in models $(M,u_1,u_2,v_1,v_2)$ of $T_2$, we have $(\theta')^M=\theta^{C(v_1,v_2)'\cap C(u_1,u_2)}$.  The desired formula $\tilde{\theta}$ is obtained by replacing the new constants by free variables.
\end{proof}

Given the previous fact, we  can consider the sentence $\theta_{m,1}$ given by
$$\inf_{\vec U}\max(\varphi_{\good}(\vec U),\sup_A \inf_{\vec V}\max(\varphi_{\good}(\vec V),\psi(A,\vec U, \vec V),\tilde{\theta}_{m,0}(\vec U,\vec V))).$$

Our above discussion shows the following:

\begin{prop}
Suppose that $\Gamma$ is any ICC group and $\balpha=(\alpha_0,1)$.  Then for any $t\geq 1$, we have $\theta_{m,1}^{M_{\balpha}(\Gamma)^{\overline{\otimes} t}}=0$.
\end{prop}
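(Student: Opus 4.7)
The plan is to prove the proposition by passing to an ultrapower and exploiting the inductive structure provided by the generalized McDuff ultraproduct machinery. Set $M:=M_{\balpha}(\Gamma)^{\overline{\otimes} t}$ and fix a free ultrafilter $\u$. Since $\theta_{m,1}$ is a sentence, it suffices to show $\theta_{m,1}^{M^{\u}}=0$, which is convenient because $M^{\u}$ is again a generalized McDuff ultraproduct for $\Gamma$ and $\balpha=(\alpha_0,1)$ and is $\omega$-saturated (so that $\varphi_{\good}$ actually vanishes on pairs of good unitaries there).

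The heart of the argument is to exhibit, in $M^{\u}$, a pair of good unitaries $\vec u$ witnessing that the outer $\inf_{\vec U}$ is $0$. I would first produce a nontrivial pair of good unitaries $\vec u\in M^{\u}$; such a pair exists since $M^{\u}$ contains copies of non-Gamma subfactors coming from the free-group substructure of $T_{\balpha}(\Gamma)$. For this $\vec u$, certainly $\varphi_{\good}(\vec u)^{M^{\u}}=0$. Next, for any single element $A\in M^{\u}$, I apply the fact about generalized McDuff ultraproducts (treating $A$ as a length-one countable sequence) to obtain a pair of good unitaries $\vec v\in M^{\u}$ with $\vec v>(A,\vec u)$ and such that $C(\vec v)'\cap C(\vec u)$ is a generalized McDuff ultraproduct for $\Gamma$ and $\balpha^{\#}=(1)$.

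At this point the key observation is that by the earlier fact, $\theta_{m,0}$ vanishes identically on $M_1(\Gamma)^{\overline{\otimes} s}$ for every $s\geq 1$, hence by \L os's theorem $\theta_{m,0}^{C(\vec v)'\cap C(\vec u)}=0$. By the translation lemma for $\tilde{\theta}$, and using that $C(\vec v)\subseteq C(\vec u)$, this gives $\tilde{\theta}_{m,0}(\vec u,\vec v)^{M^{\u}}=0$. Simultaneously, $\varphi_{\good}(\vec v)^{M^{\u}}=0$ because $\vec v$ is a pair of good unitaries in an $\omega$-saturated factor, and $\psi(A,\vec u,\vec v)^{M^{\u}}=0$ because $\vec v$ dominates $(A,\vec u)$ in the partial order $\leq$. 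Thus the maximum inside the $\inf_{\vec V}$ is $0$ at $\vec v$, so the inner inf is $\leq 0$ for this choice of $A$. Since $A$ was arbitrary, the $\sup_{A}$ still equals $0$; combined with $\varphi_{\good}(\vec u)^{M^{\u}}=0$, the outer $\inf_{\vec U}$ is bounded by $0$, proving $\theta_{m,1}^{M^{\u}}=0$.

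The main obstacle I anticipate is purely bookkeeping: one must verify that the formula $\psi$ chosen in the definition of $\theta_{m,1}$ really encodes the conjunction of $A\leq \vec V$ and $\vec U\leq \vec V$ (so that the relevant $\vec v$ produced by the generalized-McDuff fact zeroes it out), and that the single-element version of the cited fact is the correct one. Beyond that, the argument is a direct translation of the inductive step into the language of ultrapowers and relative commutants, and the vanishing propagates cleanly from the base case $\theta_{m,0}^{M_1(\Gamma)^{\overline{\otimes} s}}=0$ through the $\tilde{\theta}$ construction.
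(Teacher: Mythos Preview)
Your proposal is correct and follows essentially the same approach as the paper: pass to the ultrapower (a generalized McDuff ultraproduct for $\Gamma$ and $\balpha$), pick a nontrivial pair of good unitaries $\vec u$, and for each $A$ invoke the structural fact to produce good unitaries $\vec v>(A,\vec u)$ with $C(\vec v)'\cap C(\vec u)$ generalized McDuff for $\balpha^{\#}=(1)$, so that the base-case fact $\theta_{m,0}^{M_1(\Gamma)^{\overline{\otimes} s}}=0$ together with the $\tilde{\theta}$ translation lemma yields the result. Your bookkeeping concern about $\psi$ is minor and is exactly as intended; the paper's ``discussion'' preceding the proposition is precisely your argument (and your identification of $\balpha^{\#}=(1)$ and the relevant base-case fact is in fact cleaner than the paper's own exposition, which contains an apparent typo).
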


Contrast this with the following:

\begin{prop}
Suppose that $\Gamma$ is a non-amenable ICC group.  Then there is a constant $c_1(\Gamma)$ such that, for any $t\geq 1$ and any $\bbeta\in 2^1$ with $\bbeta(1)=0$, we have $\theta_{m,1}^{M_{\bbeta}(\Gamma)^{\overline{\otimes} t}}\geq c_1(\Gamma).$
\end{prop}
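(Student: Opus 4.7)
The plan is to take $m := m(\Gamma)$ and $c_1(\Gamma) := c(\Gamma)$ as provided by Fact~\ref{nonamenable} and to reduce the claim to an application of that fact. By the \L os theorem, it suffices to prove $\theta_{m,1}^{M} \geq c(\Gamma)$ for every ultraproduct of algebras of the form $M_0(\Gamma)^{\overline{\otimes} t}$, which we may assume to be a countably saturated generalized McDuff ultraproduct $M$ for $\Gamma$ and $\balpha=(0)$. Unfolding the sentence $\theta_{m,1}$ in $M$, what must be shown is that every pair of good unitaries $\vec u$ in $M$ admits an element $A \in M$ such that, for every pair of good unitaries $\vec v$ satisfying $\vec v > (A, \vec u)$, one has $\theta_{m,0}^{C(\vec v)' \cap C(\vec u)} \geq c(\Gamma)$.

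Fix such a $\vec u$. The key idea is to locate a subfactor $N_0 \cong M_0(\Gamma)$ sitting inside $C(\vec u) = \{\vec u\}' \cap M$; the ultraproduct structure of $M$ (and countable saturation) should supply such an $N_0$ as a diagonal copy lying in a tensor factor that is unaffected by $\vec u$. Let $(w_n)$ enumerate a countable dense subset of the operator norm unit ball of $N_0$, and set
$$A := \sum_{n \geq 0} 2^{-n} w_n \in M.$$
For any good $\vec v > (A, \vec u)$, the containment $C(\vec v) \subseteq C(A)$ combined with standard arguments for weighted sums of commuting data forces $C(\vec v) \subseteq N_0' \cap M$, and combined with $C(\vec v) \subseteq C(\vec u)$ this yields $C(\vec v) \subseteq N_0' \cap C(\vec u)$, so that $N_0 \subseteq C(\vec v)' \cap C(\vec u)$.

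It remains to exhibit a tracial von Neumann algebra $Q$ with $C(\vec v)' \cap C(\vec u) \subseteq N_0 \overline{\otimes} Q$, after which Fact~\ref{nonamenable} applied to $N := C(\vec v)' \cap C(\vec u)$ gives $\theta_{m,0}^N \geq c(\Gamma)$ and the proposition follows. The natural candidate is $Q := N_0' \cap C(\vec u)$, so that $N_0 \overline{\otimes} Q \subseteq C(\vec u)$ by commutation, and the desired inclusion should then follow from a tensor product decomposition of $C(\vec u)$ inside $M$ together with the fact that $N_0$ is a non-Gamma subfactor and therefore has w-spectral gap inside its commutant in $M$ (cf.\ Fact~\ref{connesfact}).

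The main obstacle is precisely this last step: establishing the upper sandwich inclusion $C(\vec v)' \cap C(\vec u) \subseteq N_0 \overline{\otimes} (N_0' \cap C(\vec u))$. This is the point at which the rigidity of $T_0(\Gamma)$, as opposed to the $S_\infty$-mixing present in $T_1(\Gamma)$, has to be invoked to rule out the kind of ``absorption'' phenomenon that produces the vanishing $\theta_{m,1} = 0$ in the $\balpha = (\alpha_0,1)$ case. The verification will require a Popa-style intertwining argument inside $M$ that exploits the w-spectral gap of $N_0$ in $M$; once this is in place, Fact~\ref{nonamenable} discharges the rest.
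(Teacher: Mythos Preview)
Your overall shape---pass to an ultraproduct, unfold $\theta_{m,1}$, and reduce to the lower bound of Fact~\ref{nonamenable}---is the same as the paper's. But the paper's argument is both shorter and structured quite differently, and your version has two concrete problems along the way.

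First, a structural mis-step. The algebras in the statement are $M_{\bbeta}(\Gamma)^{\overline{\otimes} t}$ with $\bbeta=(\beta_0,0)$ of length two (compare the companion proposition for $\balpha=(\alpha_0,1)$), so the ultraproduct $M$ you form is a generalized McDuff ultraproduct for $\Gamma$ and $\bbeta$, \emph{not} for $(0)$. The reduction from $\bbeta$ to $\bbeta^{\#}=(0)$ only happens \emph{after} you pass to $C(\vec v)'\cap C(\vec u)$; it is exactly the content of the structural Fact about generalized McDuff ultraproducts quoted just before the inductive step. The paper's proof works by contradiction: assume no uniform $c_1(\Gamma)$ exists, take an ultraproduct $M$ witnessing $\theta_{m,1}^M=0$, realize the outer $\inf$ by good $\vec u$, choose $a>\vec u$, and then the witness $\vec v>(a,\vec u)$ to the inner $\inf$ gives $\theta_{m,0}^{C(\vec v)'\cap C(\vec u)}=0$. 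The structural Fact (from \cite{braddisaachenry}) is then invoked as a black box to say that $C(\vec v)'\cap C(\vec u)$ is itself a generalized McDuff ultraproduct for $\Gamma$ and $(0)$, which immediately contradicts Fact~\ref{nonamenable}. There is no need to hand-build a sandwich $N_0\subseteq C(\vec v)'\cap C(\vec u)\subseteq N_0\overline{\otimes}Q$; the Fact hands you the whole thing.

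Second, your encoding $A=\sum_n 2^{-n}w_n$ does not do what you want: commuting with a weighted sum does not force commuting with each summand (take $w_0=e_{11}$, $w_1=e_{12}$ in $M_2(\mathbb C)$ for a two-line counterexample), so the inference $C(\vec v)\subseteq C(A)\Rightarrow C(\vec v)\subseteq N_0'$ fails. The single variable $A$ in $\theta_{m,1}$ really must satisfy $C(A)\subseteq C(\vec u)$ on the nose (``$a>\vec u$'' in the paper's proof), which is a generation condition, not a weighted-sum trick.

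Finally, the obstacle you flag---the upper inclusion $C(\vec v)'\cap C(\vec u)\subseteq N_0\overline{\otimes}(N_0'\cap C(\vec u))$---is precisely the substance of the structural Fact, and its proof in \cite{braddisaachenry} uses the concrete group-theoretic description of $T_{\alpha}(\Gamma)$ inside the ultraproduct rather than a generic w-spectral-gap/intertwining argument. So your proposed route through Popa-style intertwining is not the one that actually closes the gap, and it is not clear it would suffice on its own.
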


\begin{proof}
Suppose, towards a contradiction, that no such $c_1(\Gamma)$ exists.  Then there is $\bbeta\in 2^1$ such that, for any $n$, there is $t_n\geq 1$ such that $\theta_{m,1}^{M_{\bbeta}(\Gamma)^{\overline{\otimes} t_n}}<\frac{1}{n}$.  Let $M:=\prod_\u M_{\bbeta}(\Gamma)^{\overline{\otimes} t_n}$, a generalized McDuff factor corresponding to $\Gamma$ and $\bbeta$.  Let $\vec u$ be a pair of good unitaries in $M$ witnessing the outermost infimum.  Take $a>\vec u$ and take good unitaries $\vec v>(a,\vec u)$ witnessing the next infimum, that is, such that $\widetilde{\theta_{m,0}}(\vec u,\vec v)=0$.  In other words, $\theta_{m,0}^{C(v_1,v_2)'\cap C(u_1,u_2)}=0$.  But $C(v_1,v_2)'\cap C(u_1,u_2)$ is a generalized McDuff factor corresponding to $\Gamma$ and $(0)$, contradicting Fact \ref{nonamenable}.
\end{proof}

One proves Theorem \ref{mainBCIagain} by iterating the above procedure.

\end{document}